\documentclass{article}

\usepackage{amsmath,amssymb,amsthm,graphicx,color,sidecap,wrapfig}
\usepackage[inner=32mm,outer=32mm,top=28mm,bottom=28mm]{geometry}
\usepackage{framed,color,perpage,textcomp}
\usepackage[toc]{appendix}


%

\usepackage{chngcntr}
\counterwithout{figure}{section}
\counterwithout{figure}{subsection}

\numberwithin{equation}{section}

\setlength{\parskip}{2mm}

\definecolor{gray}{rgb}{0.5,0.5,0.5}

\newcommand{\ve}{\varepsilon}

\def \div {{\rm div}}

\MakePerPage{footnote}

\newtheorem{thm}{Theorem}[section]
\newtheorem{rem}[thm]{Remark}
\newtheorem{prop}[thm]{Proposition}
\newtheorem{lem}[thm]{Lemma}
\newtheorem{cor}[thm]{Corollary}

\usepackage{authblk}

\title{Bifurcation of traveling waves in a Keller-Segel type free boundary model of cell motility}

\author[1]{Leonid Berlyand}
\author[2]{Jan Fuhrmann}
\author[3]{Volodymyr Rybalko}
\affil[1]{The Pennsylvania State University, USA, e-mail: lvb2@psu.edu}
\affil[2]{Johannes Gutenberg University Mainz
	Institute of Mathematics, Germany, e-mail: fuhrmann@uni-mainz.de}
\affil[3]{B.Verkin Institute for Low Temperature Physics and Engineering, Ukraine, e-mail: vrybalko@ilt.kharkov.ua}

\begin{document}

\maketitle


\begin{abstract}
 We study a two-dimensional free boundary problem that models motility of eukaryotic cells on substrates. 
 		This problem consists of an 
 		 elliptic equation describing the flow of cytoskeleton gel 
 		 coupled with  a convection-diffusion 
 		 PDE  for the  density of myosin motors. 
 		The two key properties of this problem are (i) presence of the cross diffusion as in the classical  Keller-Segel  problem in chemotaxis and  
 		(ii) nonlinear nonlocal free boundary condition that involves curvature of the boundary.  
 		We establish the bifurcation of the traveling waves  from  a family of  radially symmetric steady states.  The traveling waves describe persistent motion without external cues or stimuli which is a signature of cell motility.
 		 We also prove existence of non-radial steady states.   
 		 Existence of both traveling waves and non-radial  steady states  is established via   Leray-Schauder degree theory applied to a Liouville-type equation
 		 (which is obtained via a reduction of the original system) in a 
 		 free boundary setting.
\end{abstract}

\section{Introduction}
\label{sec:intro}

For decades, the persistent motion exhibited by keratocytes on flat surfaces has attracted attention from experimentalists and modelers alike.   Cells of this type are found, e.g., in the cornea and their movement is of medical relevance as they are involved in wound healing after eye surgery or injuries. Also, keratocytes are perfect for experiments and modeling since they naturally live on flat surfaces, which allows capturing the main features of their motion by spatially two dimensional models. The  typical modes of motion of keratocytes are rest (no movement at all) or steady motion with fixed shape, speed, and direction. That is why the most important solutions  will be steady state solutions (corresponding to a resting cell) and traveling wave solutions (a steadily moving cell). 

\textcolor{black}{Traveling wave solutions for cell motility models have been investigated both analytically and numerically for free boundary problems in one space dimension, e.g. \cite{RecPutTru2013,RecPutTru2015, AltDem1999},} numerically for free boundary models in two dimensions, e.g. \cite{BarLeeAllTheMog2015, VanFenEde2011}, as well as for phase field models, analytically in one dimension, e.g. \cite{BerPotRyb2016}, and numerically  in two dimensions, e.g. \cite{ShaRapLev2010,ZieSwaAra2012, ShaLevRap2012}, for an overview we refer to \cite{ZieAra2016rev, AllMog2013} and references therein. \textcolor{black}{In this work we consider a two-dimensional model that can be viewed both as an extension of the analytical one-dimensional model from \cite{RecPutTru2013,RecPutTru2015} to 2D} and as a simplified version of the computational 2D model from \cite{BarLeeAllTheMog2015}. Our objective is to study the existence of traveling wave solutions for this model. These solutions describe steady motion without external cues or stimuli which is a signature of cell motility.


\textcolor{black}{In \cite{RecPutTru2013, RecPutTru2015}, the authors introduced  a one dimensional model capturing actin (more precisely, filamentous actin or F-actin) flow and contraction due to myosin motors. They proposed a model that consists of a system of an elliptic and a parabolic equation of Keller-Segel type in the free boundary setting. It was shown in \cite{RecPutTru2013} that trivial steady states bifurcate to traveling wave solutions.
The Keller-Segel  system in fixed domains was first introduced and analyzed in \cite{KelSeg1970,KelSeg1971a,KelSeg1971b} and studied  by many authors due to its fundamental importance  in biology most notably for modeling chemotaxis. There is a vast body of literature on Keller-Segel  models  with prescribed  (fixed rather than  free) boundary, see, e.g., \textcolor{black}{\cite{PerVas2012}, \cite{CerPerSchTanVau2011},\cite{Win2016}},
\textcolor{black}{\cite{DjiWin2010}}
 review \cite{HilPai2009} 
and references therein. The key issue   in such problems  is the blow up of the solutions depending on the initial data. 
}        

In \cite{BarLeeAllTheMog2015} a  two-dimensional free boundary model consisting of PDEs for actin flow, myosin density and, additionally, a reaction-diffusion equation for the 
cell-substrate adhesion strength was introduced  based on mechanical principles.
 Simulations of this model reveal steady state and traveling wave type solutions in two-dimensions that are compared to experimental observations of keratocyte motion on the flat surfaces. 
The steady state solutions are characterized by a high adhesion strength (high traction) whereas the moving cell solutions correspond to a low overall adhesion strength. In both cases, the adhesion strength is spatially almost homogeneous. \textcolor{black}{Therefore in this work we consider a simplified two-dimensional problem with constant adhesion strength parameter similar to the one dimensional model of \cite{RecPutTru2013, RecPutTru2015}.} We further simplify the model in \cite{BarLeeAllTheMog2015}, see also review \cite{RecTru2016} by considering 
a reduced rheology of the cytoskeleton based on the
high contrast in numerical values for shear and bulk viscosities cited in \cite{BarLeeAllTheMog2015}.
 Thus following \cite{Mog_private} we consider equations $[S1]-[S2]$ from \cite{BarLeeAllTheMog2015} with shear viscosity $\mu=0$ and bulk viscosity $\mu_b$ scaled to $1$. 

%

The main building block of the model considered in this work  
 is a coupled Keller-Segel type system of two partial differential equations.  The first one 
 (obtained after the above simplification of equation $[S1]$ from \cite{BarLeeAllTheMog2015})
  in dimension-free variables writes as follows:
 \begin{equation}
 \label{actinflow}
 \nabla\, {\rm div}\, u +\alpha \nabla m =u\quad   \mbox{in }\Omega(t), ~~~~~~
 \end{equation}
  where $\Omega(t)$ is the 
time dependent domain in $\mathbb{R}^2$ occupied by the cell, $u$ is the velocity of the actin gel, and  $m$ is the myosin density. This equation represents the force balance between the stress in the actin gel on the left hand side and the friction (proportional to the velocity)  between the cell and the substrate on the right hand side. Since the shear viscosity $\mu=0$, the  stress $S$  is a scalar  
  composed of a hydrodynamic (passive) part $\mathrm{div}\, u$ and the active contribution $\alpha m$ generated by myosin motors. Identifying $S$ with  the corresponding  scalar matrix (tensor $S \mathbb{I}$), equation \eqref{actinflow} can be rewritten in the  standard form $\div S=u$.
%
%
Equation \eqref{actinflow} is coupled to an advection-diffusion equation for the myosin density $m$: 
\begin{equation}
\label{myosindensity}
\partial_t m = \Delta m - {\rm div}(u \,m)\quad \mbox{in }\Omega(t).
\end{equation}
 Myosin motors are transported with the actin flow if bound to actin and freely diffuse otherwise, reflected by the second and first term on the right hand side of \eqref{myosindensity}, respectively. Assuming that the time scale for binding and unbinding is  very short compared to those relevant for our problem, the densities of bound and unbound myosin motors can be combined into the effective density $m$ (see e.g. \cite{RecPutTru2013, RecPutTru2015}).

Following \cite{BarLeeAllTheMog2015}, the evolution of the free boundary $\partial\Omega(t)$ is described  by the kinematic boundary condition for the normal velocity $V_\nu$,
\begin{equation}
\label{kinemcond}
V_\nu= (u\cdot  \nu)-\beta \kappa +\lambda
\quad\mbox{on }\partial\Omega(t),
\end{equation}  
where $\nu$ is the  unit outward normal,  $\kappa$ stands for the curvature of $\partial \Omega(t)$, and constant  $\lambda$   defined by $\lambda:=\left(2\pi\beta-\int_{\partial \Omega(t)} (u\cdot  \nu) d\sigma\right)/|\partial \Omega(t)|$  enforces  area  preservation. 
%
The kinematic condition \eqref{kinemcond}  equates  the normal velocity $V_\nu$ of the boundary to  the 
contributions from  the normal component $(u\cdot\nu)$ of the actin velocity, the surface tension $\beta \kappa$ of the membrane ($\kappa$ being the curvature),  and the area preservation term $\lambda$. The latter term is constant along the boundary and is interpreted as actin polymerization at the membrane, it compensates for the difference between velocities  of the actin gel and the membrane. 

On the boundary, equation \eqref{actinflow}  is supplied with the zero stress condition
\begin{equation}
\label{zerostress}
\div u +\alpha m=0\  
 ~~  \mbox{on }\partial\Omega(t).
\end{equation} 
whereas for the equation \eqref{myosindensity}, a no-flux condition is assumed:
\begin{equation}
\label{eq:noflux_myosin}
\frac{\partial m}{\partial \nu} =(u\cdot \nu)m\quad ~~~  \mbox{on }\partial\Omega(t).
\end{equation}

Similar parabolic-elliptic free boundary problems frequently occur in modeling of biological and physical phenomena. One type of problem arises in tumor growth models, e.g. \cite{FriRei2001,Fri00, HaoHauHuLiuSomZHa12,HuaZhaHu17} (see also reviews \cite{Fri12,LowFriJinChuLiMacWisChi09}), however, these are typically linear problems, and the domain area is not preserved.
For these models, steady state solutions have been described, and bifurcations to different steady states or growing/shrinking domain solutions have been investigated. \textcolor{black}{Another type of problem  arises in the modeling of  wound healing, see, e.g., \cite{JavVerVuiZwa2009}, where a free boundary problem for a reaction diffusion equation is used to model the evolution of complex wound
morphologies}. These models are often agent based rather than continuum models, see, e.g., \cite{ByrDra2009}. 
 More recently, mechanical tumor models have been devised leading to Hele-Shaw type problems, e.g. \cite{PerTanVau2014}. 

In the above works the focus is on solutions describing motion with constant velocity in domains that expand or contract rather than domains of fixed size and shape moving  with constant velocity. Besides this shift of focus, the main novelty of the  free boundary problem under consideration  is  the cross diffusion term in equation \eqref{myosindensity} giving rise to the Keller-Segel structure of the bulk equations.  \textcolor{black}{This structure was  introduced  in one dimensional models of cell motility in \cite{RecPutTru2013, RecPutTru2015}.}
\textcolor{black}{While the boundary in  one dimensional models (e.g. \cite{RecPutTru2013, RecPutTru2015}) consists of just two points, 
	in two dimensional free boundary models  the shape of the domain is unknown. This  poses questions that  do not arise in one dimensional settings and leads to novel challenges in analysis, for example, bifurcations from radially symmetric to non-radially symmetric shapes. 
	}

We are interested in traveling wave solutions of \eqref{actinflow} - \eqref{kinemcond}, i.e. 
solutions of the form $\Omega_t=\Omega+Vt$, $u=u(x-V_xt,y-V_yt)$, $m=m(x-V_xt,y-V_yt)$. 
Thus after passing to the moving frame and rewriting system \eqref{actinflow}-\eqref{eq:noflux_myosin} in terms of the scalar stress $S:={\rm div} u +\alpha m$ we are led to the 
following free boundary problem
\begin{equation}
\label{tw_actinflow}
-\Delta S + S = \alpha m\quad \mbox{in }\Omega,\ \mbox{and}\quad  S=0 \quad \mbox{on}\ \partial \Omega, 
\end{equation}
\begin{equation}
\label{tw_myosindensity}
-\Delta m +{\rm div}((\nabla S-V) m)=0 \quad   \mbox{in }\Omega,\ \mbox{and}\quad  
\frac{\partial m}{\partial \nu} =((\nabla  S-V)\cdot \nu)m\ 
\quad   \mbox{on }\partial\Omega,
\end{equation}
\begin{equation}
\label{tw_boundary} V_\nu=\frac{\partial S}{\partial \nu}-\beta \kappa +\lambda
 \quad   \mbox{on }\partial\Omega.
\end{equation} 


We now outline the main  result of the paper (see, Section \ref{sec:bifurcation_via_degree} for further details) and key ingredients  of the proof.
\begin{thm}\label{thm:main} There is a family  of  traveling waves solutions of \eqref{tw_actinflow}-\eqref{tw_boundary} with nonzero velocities $V$, bifurcating from radially  symmetric steady state solutions. This family exists for  all values of parameters $\alpha>0$ and $\beta>0$ (except, possibly, for a countable number of values of $\beta$, see Theorem \ref{thm:bifurc_teady_st}) and for any domain area $|\Omega|$. 
\end {thm}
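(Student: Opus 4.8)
The plan is to reduce the free boundary system \eqref{tw_actinflow}--\eqref{tw_boundary} to a single scalar equation of Liouville type on a domain $\Omega$, and then apply Leray--Schauder degree theory to detect bifurcating branches. First I would exploit the structure of \eqref{tw_myosindensity}: the equation $-\Delta m + \div((\nabla S - V)m) = 0$ with the no-flux boundary condition $\partial m/\partial\nu = ((\nabla S - V)\cdot\nu)m$ is a stationary Fokker--Planck equation whose (normalized) solution can be written explicitly. Seeking $m = \rho\, e^{S - V\cdot x}$ reduces the myosin equation to $\div(e^{S-V\cdot x}\nabla\rho)=0$ with homogeneous Neumann data, forcing $\rho$ to be constant; hence $m = M\, e^{S - V\cdot x}$ where $M>0$ is fixed by the total myosin mass $\int_\Omega m = $ const (the mass being conserved by \eqref{myosindensity} with its no-flux condition). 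Substituting into \eqref{tw_actinflow} yields the Liouville-type problem
\begin{equation*}
-\Delta S + S = \alpha M\, e^{S - V\cdot x}\quad\text{in }\Omega,\qquad S = 0\quad\text{on }\partial\Omega,
\end{equation*}
coupled to the free boundary condition \eqref{tw_boundary}, in which $V$ and the shape of $\Omega$ remain unknowns. At $V=0$ and $\Omega$ a disk, rotational symmetry gives the radially symmetric steady-state family, whose existence and (generic in $\beta$) nondegeneracy I take from Theorem \ref{thm:bifurc_teady_st}.

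Next I would set up the bifurcation. Fix the area $|\Omega|$ and center $\Omega$ at the origin. Parametrize admissible domains near the reference disk by a scalar height function $\varphi$ on the unit circle (with the volume constraint and a normalization removing translations), so the unknown becomes the triple $(S,\varphi,V)$, with $V$ a small vector playing the role of the bifurcation parameter's direction. For fixed $V$, solving the Liouville-Dirichlet problem on the domain $\Omega_\varphi$ is a compact, well-posed operation in suitable Hölder spaces (elliptic regularity plus the maximum principle / sub--supersolution argument to get a branch of solutions continued from $V=0$); call the solution $S = S(\varphi,V)$. Plugging this into the stationary version of \eqref{tw_boundary} — namely $V_\nu = \partial S/\partial\nu - \beta\kappa + \lambda$ with $V_\nu = V\cdot\nu$ for a traveling wave — produces a nonlinear, nonlocal equation $\mathcal{F}(\varphi,V)=0$ on the circle. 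The curvature term contributes the linearized operator $-\beta(\partial_\theta^2 + 1)$ acting on $\varphi$ (whose kernel, the first Fourier modes, corresponds exactly to translations and is factored out), so $D_\varphi\mathcal{F}$ at the trivial branch is Fredholm of index zero and, away from the exceptional $\beta$'s, invertible. One then solves for $\varphi = \varphi(V)$ by the implicit function theorem, reducing everything to a finite-dimensional equation for $V$.

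The final step is to show that the reduced equation for $V\in\mathbb{R}^2$ has a nonzero solution branch emanating from $V=0$. Here I would compute the derivative of the reduced map at $V=0$: by the rotational symmetry of the reference configuration, the map $V\mapsto$ (reduced boundary equation) is equivariant under $O(2)$, and its linearization at $0$ is a scalar multiple $c(\alpha,\beta,|\Omega|)$ of the identity on $\mathbb{R}^2$. If $c\neq 0$ only the trivial solution exists near $0$, so the bifurcation must occur where $c$ vanishes or where the Leray--Schauder degree changes; the mechanism is that as the mass parameter $\alpha M$ (equivalently $\alpha$, for fixed area) crosses the threshold at which the radial steady state loses stability in the first nontrivial mode, the degree of the reduced finite-dimensional map on a small sphere $|V|=r$ jumps, forcing a zero with $V\neq 0$ — and by $O(2)$-equivariance this zero comes as a whole circle of traveling waves (rotations of one another), exactly the "family" in the statement. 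I expect the main obstacle to be the nonlocal and fully nonlinear character of $\mathcal{F}$: verifying that the composed operator (solve Liouville on $\Omega_\varphi$, then read off the boundary condition) is genuinely compact and $C^1$ in the right function spaces, and computing the sign/vanishing of the bifurcation coefficient $c$ precisely enough to guarantee a degree jump, is the delicate part — everything else is the standard Crandall--Rabinowitz / degree-theoretic machinery adapted to the free boundary setting.
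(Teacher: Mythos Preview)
Your reduction to the Liouville-type problem and the overall architecture --- parametrize nearby domains, solve the Dirichlet problem on them, feed the normal derivative into the kinematic condition, and detect bifurcation by a Leray--Schauder degree jump --- match the paper's strategy. The paper sets this up as a joint fixed-point problem for $(\rho,V)$ rather than first eliminating $\rho$ via the implicit function theorem and reducing to a finite-dimensional equation for $V$, but that is a matter of bookkeeping.

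The genuine gap is your final step. You assert that the degree jumps ``as the mass parameter $\alpha M$ crosses the threshold at which the radial steady state loses stability in the first nontrivial mode,'' but this is both the wrong characterization and, more importantly, unproven. It is not a Dirichlet stability threshold: Proposition~\ref{Spectrum_Liouville_linaerized} shows that the linearized Dirichlet eigenvalues in every Fourier mode $n\geq 1$ remain strictly positive along the whole curve $\mathcal{A}_1$, so no mode-$1$ instability of that kind ever occurs. The actual bifurcation condition \eqref{bifurccondition1}, namely $R\Phi'(R)=\Lambda\int_0^R e^{\Phi}\Phi' r^2\,dr$, arises from projecting the \emph{kinematic} boundary condition onto $\cos\varphi$, and the heart of the proof is Lemma~\ref{est_li_zhizn_na_Marse}: one must exhibit points on $\mathcal{A}_1$ where the two sides of \eqref{bifurccondition2} have opposite order. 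This requires Pohozhaev identities, a case split on $R$, and crucially a bound on the second eigenvalue of the linearized Liouville problem from \cite{Suz1992} forcing $\Lambda\int_0^R e^{\Phi} r\,dr\geq 4$ near the edge of $\mathcal{A}_1$. You correctly flag ``computing the sign/vanishing of the bifurcation coefficient'' as the delicate part, but your proposal supplies no mechanism for it; in the paper this is precisely where the nontrivial analysis lives, and the subsequent degree jump (eigenvalues $E_{0,1}(z)$ of \eqref{KvadratnoeUravnenieBifurk} passing from a complex-conjugate pair to two real values straddling $1$, while the $E_l$, $l\geq 2$, are kept away from $1$ by the exclusion \eqref{exceptional_va_Liouv} on $\beta$) follows from it.
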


Without loss of generality we assume motion in $x$-direction and, slightly abusing notation, write $V=(V,0)$. 
Furthermore, for a given  $S$ all nonnegative solutions of \eqref{tw_myosindensity}
($m$ represents the density of myosin and therefore cannot be negative) are given by $m(x,y)=m_0 e^{S(x,y)-xV}$  with some constant $m_0\geq 0$. 
Indeed, it is straightforward that $m=e^{S(x,y)-xV}$ is a solution of \eqref{tw_myosindensity}. The uniqueness up to a multiplicative constant follows from  the Krein-Rutman 
theorem \cite{KraLifSob1989}
, or alternatively using the factorization $m=m_0(x,y) e^{S(x,y)-xV}$, considering $m_0$ as a unknown function, and 
proving that $m_0=\rm{const}$ by showing that it satisfies an advection-diffusion equation with zero Neumann 
condition. This allows us to eliminate $m$ from  \eqref{tw_actinflow}-\eqref{tw_myosindensity} and rewrite the problem of finding traveling waves 
in the following concise form:  
\begin{equation}
\label{liouvillewithv}
-\Delta S + S = \Lambda e^{S-xV}
\quad\mbox{in }\Omega,
\end{equation}
with boundary conditions 
\begin{equation}
\label{paraconditions1}
S=0\quad \mbox{on }\partial\Omega 
\end{equation}
and 
\begin{equation}
\label{paraconditions2}
V\nu_x=\frac{\partial S}{\partial \nu}-\beta \kappa +\lambda \quad   
\mbox{on }\partial\Omega.
\end{equation}
\textcolor{black}{Note that an ODE similar to the PDE \eqref{liouvillewithv} was obtained in the analysis of the one dimensional free boundary problem  for the Keller-Segel type system in  \cite{RecPutTru2013, RecPutTru2015}.}
In problem \eqref{liouvillewithv}-\eqref{paraconditions2} $S$, $V$, and $\Lambda=m_0\alpha\geq 0$ are unknowns and the parameter $\beta$ is given. 
Note that \eqref{liouvillewithv}-\eqref{paraconditions2} is a free boundary problem, that is, the  domain $\Omega$ is also unknown.
For radially symmetric solutions of \eqref{liouvillewithv}-\eqref{paraconditions1}
with $V=0$ and  $\Omega$ being a disk, the constant $\lambda$ can always be chosen so that the boundary condition \eqref{paraconditions2} is satisfied.  This observation allows us to construct a one-parameter family of radially symmetric steady state solutions by solving the nonlinear eigenvalue  problem \eqref{liouvillewithv}-\eqref{paraconditions1}. Furthermore, the equation \eqref{liouvillewithv} contains exponential nonlinearity, as in the classical Liouville equation \cite{Lio1853}  which has explicit radially symmetric solutions, but the 
additional zero order term $S$ in the left hand side of \eqref{liouvillewithv} complicates the analysis. Note that non-trivial steady states also exist in the one-dimensional case  \cite{RecPutTru2013, RecPutTru2015} (they are  unstable).  

 We rely on an argument from \cite{CraRab1971} (see also \cite{Kor2012}) based on the Implicit Function Theorem  to show existence of an analytic curve $\mathcal{A}_1$ of radially symmetric solutions of \eqref{liouvillewithv}-\eqref{paraconditions1}. Moreover these solutions are extended to the case of nonzero $V$ in \eqref{liouvillewithv} and small perturbations of the domain $\Omega$ from a given disk. 
Then \eqref{liouvillewithv}-\eqref{paraconditions2} is reduced to selecting  solutions
of  \eqref{liouvillewithv}-\eqref{paraconditions1} that satisfy \eqref{paraconditions2}. 
Considering the linear part of perturbations of radially symmetric solutions we (formally) derive the condition \eqref{bifurccondition1} (Section \ref{sec:bifurcation_cond}) for a bifurcation from the steady states to genuine traveling waves (with $V\not =0$). We next show that the condition
\eqref{bifurccondition1} is indeed satisfied on a nontrivial radially symmetric  steady state solution belonging to
$\mathcal{A}_1$, exploiting a  subtle bound on the second eigenvalue of the linearized problem for the Liouville equation from \cite{Suz1992}. Yet another technically involved part of this work is devoted to recasting \eqref{liouvillewithv}-\eqref{paraconditions2} as a fixed point problem in an appropriate functional setting. Then a topological argument based on Leray-Schauder degree theory  rigorously justifies the  existence of  traveling waves with $V\not=0$.   Both the recasting and the topological argument require spectral analysis of various linearized operators appearing in these considerations. Next the techniques 
developed for establishing traveling waves solutions are also used to find  steady states with no radial symmetry.


%


The rest of paper is organized as follows. In Section \ref{sec:steady_states} we find a one parameter family of radially symmetric steady state solutions and  establish  their properties.
In Section \ref{sec:bifurcation_cond} we derive a necessary condition  \eqref{bifurccondition1} for the bifurcating from the family of radially symmetric steady states to  a family of traveling wave solutions ($V\neq 0$) and show that this condition  is satisfied 
on the  analytic  curve $\mathcal{A}_1$ of radially symmetric solutions.
%
%
In Section \ref{sec:fourier} we investigate the spectral properties of the linearized operator of the equation \eqref{liouvillewithv} around radially symmetric steady states. This operator appears in a number of the subsequent constructions.
In section \ref{sec:solution_dirichlet_with_v}  we establish existence of the solutions to the Dirichlet problem \eqref{liouvillewithv}-\eqref{paraconditions1} and study their properties. This is done for small but not necessarily zero velocity $V$ in  a prescribed domain $\Omega$, which is a  perturbation of a  disk. 
Section \ref{sec:bifurcation_via_degree} completes the proof of the main result on the bifurcation of the steady states to traveling waves. 
To this end we  rewrite  \eqref{liouvillewithv}-\eqref{paraconditions2} as a fixed point problem,  and  study the local Leray-Schauder index of the corresponding mapping.  
We show that this index  jumps  at the potential bifurcation point (identified in Section \ref{sec:bifurcation_cond}). This  establishes the  bifurcation  at this point.
Finally, in section \ref{sec:nonradial_steady_states}  we  prove existence of  nonradial steady states.
In the Appendix A we construct three terms of the asymptotic  expansion of  traveling wave solutions in powers of small velocity, which allow us  to   describe  the emergence of non-symmetric shapes both analytically and numerically. 


\section{Family of radially symmetric steady states}
\label{sec:steady_states}

Problem \eqref{liouvillewithv}-\eqref{paraconditions2} has a family of steady 
solutions, with $V=0$, found in a radially symmetric form. Namely, let $\Omega$ be a disk $B_R$ with radius 
$R>0$, then we seek radially symmetric solutions $S=\Phi(r)$, $r=\sqrt{x^2+y^2}$, of the 
equation
\begin{equation}
\label{radialLiouville}
-\frac{1}{r}( r \Phi^\prime(r))^\prime +\Phi=\Lambda e^{\Phi},\quad 0<r<R,
\end{equation}  
with boundary conditions
\begin{equation}
\label{radialLiouvilleBC}
\Phi^\prime(0)=\Phi(R)=0.
\end{equation}  
Note that \eqref{radialLiouville}-\eqref{radialLiouvilleBC} is a nonlinear eigenvalue problem, i.e. both the constant $\Lambda$ and the 
function $\Phi(r)$ are unknowns in this problem.
Every solution of \eqref{radialLiouville}-\eqref{radialLiouvilleBC} also satisfies 
\eqref{liouvillewithv}-\eqref{paraconditions2} with $V=0$ and some constant $\lambda$, that is we can always choose $\lambda$ in this radially symmetric problem,  so that the condition  \eqref{paraconditions2} is satisfied.
Equation \eqref{radialLiouville} is  the classical Liouville 
equation \cite{Lio1853}  with an additional  zero order term (the second term on the left hand side of \eqref{radialLiouville}). Various forms of the Liouville equation arise in many applications ranging from the geometric problem of prescribed Gaussian curvature to the relativistic Chern-Simons-Higgs model \cite{NolTar1998}, the mean field limit of point vortices of Euler flow \cite{CalLioMarPul1992} and the Keller-Segel model of chemotaxis \cite{Wol1997}.  
For a review of the literature on Liouville type equations we address the reader  to
\cite{Lin2007} and references therein.  While the  above works mostly address  the  issues related to the blow-up in the  Liouville equation, see e.g., \cite{Li1999},  in contrast  our focus is on the construction of  the  family of solutions and its properties. Since we are concerned with  special solutions of \eqref{actinflow}-\eqref{eq:noflux_myosin}  such as  traveling waves and steady states  rather  than  general properties of  this evolution problem,  the issue of blow-up does not arise.



The following theorem establishes existence of solutions of problem \eqref{radialLiouville}-\eqref{radialLiouvilleBC}, and the subsequent lemma lists some of their properties.
%

\begin{rem} It is  natural to expect that the set of solutions of \eqref{radialLiouville}-\eqref{radialLiouvilleBC} has the same structure as the explicit  solutions of the classical Liouville equation \cite{Suz1992} in the disk. However,  the presence of the additional term $S$ in \eqref{radialLiouville} complicates 
the analysis even in the radially symmetric case, in particular, the standard  trick based  of Pohozhaev  identity no longer can be used to establish non-degeneracy (see condition \eqref{nondegcondit}). 
\end{rem}

\begin{thm} \label{firstExistTH}
	Fix  $R>0$, then 
	
	 \emph{(i)} There exists a continuum (a closed connected set) $\mathcal K\subset \mathbb{R}\times C([0,R])$ of nonnegative solutions $\Lambda\geq 0$, $\Phi\geq 0$ of \eqref{radialLiouville}-\eqref{radialLiouvilleBC}, emanating from the trivial solution $(\Lambda,\Phi)=(0,0)$. 
	There exists a finite positive
\begin{equation*}
\Lambda_0=\max \{\Lambda\,|\, 
\text{\eqref{radialLiouville}-\eqref{radialLiouvilleBC} has a solution } (\Lambda,\Phi)
\},
\end{equation*}
in particular,  $\Lambda\leq \Lambda_0$ for all  $(\Lambda,\Phi)\in\mathcal{K}$. On the other hand $\|\Phi\|_{C([0,R])}$ is not bounded in $\mathcal K$, and moreover 
 \begin{equation}
 \label{exponentblowup1}
 \sup\left\{\int_0^R e^\Phi \, rdr \,| \, (\Lambda,\Phi)\in \mathcal{K}  \right\}=\infty.
 \end{equation}

\emph{(ii)} For every $0\leq \Lambda<\Lambda_0$ there exists a pointwise  minimal solution 
$\Phi$ (solution which takes minimal values at every point among all solutions) of 
\eqref{radialLiouville}-\eqref{radialLiouvilleBC}, 
and these 
minimal solutions are pointwise increasing in $\Lambda$. They form an analytic curve $\mathcal{A}_0$ in $\mathbb{R}\times C([0;R])$ which can
be extended to an analytic curve $\mathcal{A}_1$. The curve $\mathcal{A}_1$  is the connected component of $\mathcal{A}$  that contains $\mathcal{A}_0$,
%
%
%
where 
\begin{equation}
\label{DefofmathcalA}
\mathcal{A}:=\{(\Lambda,\Phi)\in \mathcal{K}\,|\, \,\sigma_2(\Lambda,\Phi)>0 \},
\end{equation} 
and $\sigma_2(\Lambda,\Phi)$ denotes the second eigenvalue of the linearized eigenvalue problem
\begin{equation}
\label{linearizedSW}
\begin{matrix} -\Delta w+w-\Lambda e^\Phi w=\sigma w \quad\text{in}\  B_R,\quad w=0 \quad\text{on}\  \partial B_R.
\end{matrix}	
\end{equation}

\end{thm}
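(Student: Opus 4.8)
\emph{Proof strategy.} The plan is to treat \eqref{radialLiouville}--\eqref{radialLiouvilleBC} as a Gelfand-type nonlinear eigenvalue problem and combine global continuation, the sub/supersolution method, and stability of minimal solutions, handling the extra zero-order term $\Phi$ directly (the Pohozaev-based non-degeneracy argument available for the pure Liouville equation is not, as noted in the Remark). First I would recast the problem as the fixed-point equation $\Phi=\Lambda\,\mathcal{L}(e^{\Phi})$ in the space of radial functions in $C([0,R])$, where $\mathcal{L}=(-\Delta+1)^{-1}$ with homogeneous Dirichlet data is compact and order-preserving; in particular every solution with $\Lambda\ge 0$ is automatically nonnegative and smooth. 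An a priori bound on $\Lambda$ comes from testing \eqref{radialLiouville} against the first Dirichlet eigenfunction $\varphi_{1}>0$ of $-\Delta$ on $B_{R}$ (eigenvalue $\mu_{1}$) and using convexity, $e^{\Phi}\ge 1+\Phi$: this gives $(\mu_{1}+1-\Lambda)\int_{B_{R}}\Phi\,\varphi_{1}\ge\Lambda\int_{B_{R}}\varphi_{1}>0$, hence $\Lambda<\mu_{1}+1$, so any maximal parameter is finite; it is positive because near $(\Lambda,\Phi)=(0,0)$, where $D_{\Phi}(\Phi-\Lambda\mathcal{L}(e^{\Phi}))=I$, the implicit function theorem gives a branch of solutions with small $\Lambda>0$. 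The continuum $\mathcal{K}$ is then the connected component through $(0,0)$ of the solution set; since $(0,0)$ is a regular point and the unique solution at $\Lambda=0$, a standard Leray--Schauder/Rabinowitz global continuation argument forces $\mathcal{K}$ to be unbounded in $\mathbb{R}\times C([0,R])$, and since $\Lambda$ stays in $[0,\mu_{1}+1)$ this means $\sup_{\mathcal{K}}\|\Phi\|_{C([0,R])}=\infty$.

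For the mass blow-up \eqref{exponentblowup1} I would argue by contradiction. If $\int_{0}^{R}e^{\Phi}\,r\,dr$ were bounded on $\mathcal{K}$, then integrating \eqref{radialLiouville} over $B_{R}$ and using $\Phi\ge 0=\Phi|_{\partial B_{R}}$ (so $\partial_{\nu}\Phi\le 0$) shows that both $\Lambda e^{\Phi}$ and $\Phi$ are bounded in $L^{1}(B_{R})$ along $\mathcal{K}$; away from $\Lambda=0$ (excluded since near $\Lambda=0$ the branch is the graph of the small IFT solution) this puts $-\Delta\Phi$ in a bounded subset of $L^{1}(B_{R})$, and a Brezis--Merle/Stampacchia estimate upgrades this to uniform $L^{p}$ bounds on $e^{\Phi}$ and then, after a bootstrap, to a uniform $C([0,R])$ bound on $\Phi$ --- contradicting the previous paragraph. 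The same estimate shows $\Lambda_{0}$ is attained: as $\Lambda\uparrow\Lambda_{0}$ the (monotone, see below) minimal solutions cannot blow up in $L^{1}$, so they converge to an extremal solution at $\Lambda_{0}$. I expect this two-dimensional exponential-nonlinearity estimate, controlling possible concentration of $e^{\Phi}$, to be the main technical obstacle; the remaining steps are comparatively soft.

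For part (ii), fix $0\le\Lambda<\Lambda_{0}$: then $0$ is a subsolution and any solution at a larger parameter $\Lambda'\in(\Lambda,\Lambda_{0}]$ is a strict supersolution, so monotone iteration from $0$ produces a solution $\underline{\Phi}_{\Lambda}$ lying below every solution at $\Lambda$, i.e.\ the pointwise minimal one; comparing with supersolutions shows $\Lambda\mapsto\underline{\Phi}_{\Lambda}$ is pointwise increasing, strictly in $B_{R}$ by the strong maximum principle. Next, $\sigma_{1}(\Lambda,\underline{\Phi}_{\Lambda})\ge 0$, for otherwise $\underline{\Phi}_{\Lambda}-\varepsilon w$, with $w>0$ the principal eigenfunction of \eqref{linearizedSW}, would be, for small $\varepsilon$, a supersolution strictly below $\underline{\Phi}_{\Lambda}$, contradicting minimality; and in fact $\sigma_{1}>0$ strictly, since for $\Lambda<\Lambda_{1}<\Lambda_{0}$ the potential $\Lambda e^{\underline{\Phi}_{\Lambda}}$ is strictly below $\Lambda_{1}e^{\underline{\Phi}_{\Lambda_{1}}}$, so strict monotonicity of the principal eigenvalue in the potential gives $\sigma_{1}(\Lambda)>\sigma_{1}(\Lambda_{1})\ge 0$. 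Positivity of $\sigma_{1}$ makes $I-\Lambda\mathcal{L}(e^{\underline{\Phi}_{\Lambda}}\,\cdot\,)$ invertible, so the analytic implicit function theorem (as in \cite{CraRab1971,Kor2012}) shows the minimal solutions form an analytic curve $\mathcal{A}_{0}$; since the principal eigenvalue is simple, $\sigma_{2}(\Lambda,\underline{\Phi}_{\Lambda})>\sigma_{1}(\Lambda,\underline{\Phi}_{\Lambda})>0$, whence $\mathcal{A}_{0}\subset\mathcal{A}$. Finally, along $\mathcal{A}$ one has $\sigma_{2}>0$, so the kernel of the operator in \eqref{linearizedSW} is at most one-dimensional, spanned by a positive function, and the transversality $\int_{B_{R}}e^{\Phi}w\,dx>0$ holds; the Crandall--Rabinowitz theorem then gives, at every point of $\mathcal{A}$, a local analytic (fold) parametrization of the solution set, so the connected component $\mathcal{A}_{1}$ of $\mathcal{A}$ through $\mathcal{A}_{0}$ is an analytic curve extending $\mathcal{A}_{0}$ past the turning point $\Lambda_{0}$ (where $\sigma_{1}=0$, $\sigma_{2}>0$), as claimed.
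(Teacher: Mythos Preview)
Your proposal is correct and follows essentially the same route as the paper: bound $\Lambda$ by testing against the first Dirichlet eigenfunction, obtain an unbounded continuum by Leray--Schauder continuation, invoke Brezis--Merle for the mass blow-up \eqref{exponentblowup1}, and in part~(ii) combine sub/supersolutions (for minimal solutions) with the implicit function theorem and the Crandall--Rabinowitz fold argument at points where $\sigma_1=0<\sigma_2$.

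The one genuine difference worth noting is how the continuum is produced. You apply a global Rabinowitz-type continuation directly to $\Phi=\Lambda\,\mathcal{L}(e^{\Phi})$; this works, but the ``component through $(0,0)$ is unbounded'' step leans on a topological theorem you only name. The paper instead renormalizes, writing the equation as $\Phi=\tilde\Lambda\,\mathcal{R}(\Phi)$ with $\tilde\Lambda=\Lambda\bigl(\int_{B_R}e^{2\Phi}\bigr)^{1/2}$ and $\mathcal{R}(\Phi)=(-\Delta+1)^{-1}\bigl(e^{\Phi}/(\int e^{2\Phi})^{1/2}\bigr)$; since $e^{\Phi}/(\int e^{2\Phi})^{1/2}$ is bounded in $L^2$ uniformly, $\mathcal{R}$ has bounded range in $C([0,R])$, so Leray--Schauder continuation immediately gives a continuum along which $\tilde\Lambda$ takes \emph{all} values in $[0,\infty)$. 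Boundedness of $\Lambda$ then forces $\int e^{2\Phi}$ (hence $\|\Phi\|_{C}$) to be unbounded. This trick buys a cleaner, more self-contained continuation argument; your version is more classical and needs no extra change of variables. In part~(ii) you actually give more detail than the paper, which simply cites Keener--Keller for the minimal solutions and their monotonicity.
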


\begin{rem}
           Summarizing part $(ii)$ of the theorem we have the following inclusions
           \begin{align*}
           &~~~~~~~~~~~~~\mathcal{K} & \supseteq &~~~~~~~~~\mathcal{A} &\supseteq&~~~~~~~~~~~~~\mathcal{A}_1 &\supseteq&~~~~~~~~\mathcal{A}_0\\
           &\text{continuum of solutions} && 2^{nd} \text{e.v. positive} && \text{component containing }\mathcal{A}_0 && \text{minimal solutions}             
           \end{align*}
          where at most $\mathcal{A}$ may be disconnected. 
          The theorem establishes existence of the analytic curve of radial solutions $\mathcal{A}_1$ along which bifurcations to traveling waves with nonzero velocity occur (see Lemma \ref{est_li_zhizn_na_Marse}).
          \end{rem}


\begin{proof}
	\emph{(i)} By the maximum principle every solution of \eqref{radialLiouville}-\eqref{radialLiouvilleBC} with $\Lambda\geq 0$ is positive for  $r<R$. 
	Let $\mu_D>0$ denote the first eigenvalue of $-\Delta$ in $B_R$
	with homogeneous Dirichlet boundary condition, and let $U>0$ be the corresponding eigenfunction. Then multiplying 
	\eqref{radialLiouville}
	by $r U$ and integrating we find
	$$
	(1+\mu_D) \int_0^R U\Phi \, rdr= \Lambda \int_0^R e^{\Phi} U \, rdr\geq \Lambda \int_0^R \Phi U \, rdr,
	$$
	and therefore $\Lambda \leq 1+\mu_D$.
	
	To show the existence of the continuum $\mathcal{K}$, we rewrite   \eqref{radialLiouville}
	as 	
	\begin{equation}
	\label{semiresolved}
	-\Delta \Phi +\Phi=\tilde \Lambda \left(\frac{e^{2\Phi}}{\int_{B_R} e^{2\Phi}dxdy}\right)^{1/2}\quad \text{in}\ B_R,
	\end{equation}
	with $\Phi=\Phi(r)$, $r=\sqrt{x^2+y^2}$, and the new unknown parameter $\tilde \Lambda$ in place of $\Lambda$.  Then we resolve  
	\eqref{semiresolved} with Dirichlet condition $\Phi=0$ on $\partial B_R$, considering 
	the right hand side of \eqref{semiresolved} as a given function. This leads to an equivalent reformulation of  \eqref{radialLiouville}-\eqref{radialLiouvilleBC} as a fixed point problem of the form 
	\begin{equation}
	\label{FFFixedPPP}
	\Phi=\tilde \Lambda \mathcal{R}(\Phi).
	\end{equation}
	By standard elliptic estimates $\mathcal{R}$ is a compact mapping in $C([0,R])$, moreover $\mathcal{R}(C([0,R]))$ is a bounded subset of $C([0,R])$. Therefore we can apply  Leray-Schauder continuation arguments, see, e.g., \cite{Maw1999}, and find that there is a continuum of solutions $(\tilde \Lambda, \Phi)$ of \eqref{FFFixedPPP} emanating from $(0,0)$ and $\tilde \Lambda$
	takes all nonnegative values. Then in view of the boundedness of  $\Lambda=\tilde \Lambda/\left(2\pi \int_0^R e^{2\Phi}rdr \right)^{1/2}$ we conclude that $\sup\{\|\Phi\|_{C([0,R])};\, (\Lambda,\Phi)\in \mathcal{K}  \}=\infty$. This in turn implies \eqref{exponentblowup1} by Corollary 6 of \cite{BreMer1991}.

\emph{(ii)} According to \cite{KeeKel1974} there is a minimal solution $\Phi$ of \eqref{radialLiouville}-\eqref{radialLiouvilleBC} for each $\Lambda\in[0,\Lambda_0)$ with $\Phi$ depending monotonically on $\Lambda$. Consider now any, not necessarily minimal, solution $(\Lambda,\Phi)$ such that the second eigenvalue $\sigma_2(\Lambda,\Phi)$ of the linearized problem \eqref{linearizedSW}
is positive. 
By using well-etablished techniques based on the Implicit Function Theorem, see, e.g. \cite{Kor2012}, we obtain that all the solutions of \eqref{radialLiouville}-\eqref{radialLiouvilleBC} in a neighborhood of $(\Lambda,\Phi)$ belong to a smooth 
curve through $(\Lambda, \Phi)$, provided that either the linearized problem  \eqref{linearizedSW}
has no zero eigenvalue or this eigenvalue is simple and the corresponding eigenfunction $w$
satisfies the non-degeneracy condition 
\begin{equation}
	\label{nondegcondit}
	\int_0^R e^{\Phi(r)} w(r)\, rdr\neq 0.
\end{equation} 
Since by assumption $\sigma_2(\Lambda,\Phi)>0$, the zero eigenvalue, if any exists, is the first 
eigenvalue of \eqref{linearizedSW} and therefore $w$ has a fixed sign and necessarily \eqref{nondegcondit} holds. Thus $\mathcal{A}_1$ is indeed a smooth curve, it contains the minimal solutions (those, for which the first eigenvalue $\sigma_1(\Lambda,\Phi)$ of linearized problem  \eqref{linearizedSW} is nonnegative) but extends beyond these.
Finally,  since the nonlinearity $e^\Phi$ in \eqref{radialLiouville} is analytic the curve $\mathcal{K}_1$ is analytic as well, see the proof of Proposition \eqref{DefineSolLiouvilNonrad}.  
%
%
%
%
%
%
%
%
\end{proof}

\begin{lem}\label{lem:properties_steady_states}
 Each solution of \eqref{radialLiouville}-\eqref{radialLiouvilleBC}  with $\Lambda\geq 0$ satisfies
\begin{equation}
\label{proizvodnaya}
\Phi^\prime(r)<0\quad \text{for}\  0<r\leq R. 
\end{equation} 
and the following Pohozhaev equalities
\begin{equation}
\label{Pohozhaev}
\frac{1}{2}(R\Phi^\prime(R))^2+\int_0^R\Phi^2\, r dr=-\Lambda \int_{0}^R e^{\Phi}\Phi^\prime \, r^2 dr=
2\Lambda \int_{0}^R e^{\Phi}r{\rm d}r-\Lambda R^2.
\end{equation}
\end{lem}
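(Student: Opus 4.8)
The statement has two independent parts: the sign \eqref{proizvodnaya} of the radial derivative, and the two Pohozhaev-type identities \eqref{Pohozhaev}. The latter are essentially an algebraic consequence of testing \eqref{radialLiouville} against a well-chosen multiplier, so the plan is to dispose of them first and then deal with the monotonicity separately.

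\emph{Pohozhaev identities.} I would multiply \eqref{radialLiouville} by the Rellich--Pohozhaev multiplier $r^{2}\Phi'(r)$ and integrate over $(0,R)$. Using $-\tfrac1r(r\Phi')'\cdot r^{2}\Phi'=-\tfrac12\frac{d}{dr}(r\Phi')^{2}$ and $r^{2}\Phi\Phi'=\tfrac12\frac{d}{dr}(r^{2}\Phi^{2})-r\Phi^{2}$, the total-derivative pieces collapse: the boundary terms are $[(r\Phi')^{2}]_0^R=(R\Phi'(R))^{2}$ and $[r^{2}\Phi^{2}]_0^R=0$ (the former since $r\Phi'(r)\to0$ as $r\to0$, the latter since $\Phi(R)=0$), which leaves exactly the first equality $\tfrac12(R\Phi'(R))^{2}+\int_0^R\Phi^{2}r\,dr=-\Lambda\int_0^R e^{\Phi}\Phi'\,r^{2}\,dr$. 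The second equality then follows from a single further integration by parts on the right-hand side, $-\Lambda\int_0^R e^{\Phi}\Phi'\,r^{2}\,dr=-\Lambda\big([r^{2}e^{\Phi}]_0^R-2\int_0^R e^{\Phi}r\,dr\big)=-\Lambda R^{2}+2\Lambda\int_0^R e^{\Phi}r\,dr$, using $\Phi(R)=0$ once more.

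\emph{Monotonicity.} Here I may assume $\Lambda>0$, since $\Lambda=0$ forces $\Phi\equiv0$. First I would establish $\Phi'(R)<0$: if $\Phi'(R)=0$, then since $\Phi(R)=0$ the equation gives $\Phi''(R)=-\Lambda<0$, so $\Phi$ would attain a strict maximum equal to $0$ at $r=R$ and be negative just inside $\partial B_R$, contradicting the positivity of $\Phi$ on $[0,R)$ recorded in Theorem~\ref{firstExistTH}(i) (equivalently, this is the Hopf lemma for the positive supersolution $\Phi$ of $-\Delta+1$). To propagate $\Phi'<0$ into the interior I would invoke the moving-plane (Gidas--Ni--Nirenberg) argument for $-\Delta\Phi=\Lambda e^{\Phi}-\Phi$ in $B_R$: the nonlinearity $s\mapsto\Lambda e^{s}-s$ is $C^{1}$ and $\Phi>0$ with $\Phi|_{\partial B_R}=0$, hence $\partial_r\Phi<0$ for $0<r<R$. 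Alternatively one can argue at the ODE level via the non-increasing ``Hamiltonian'' $P(r):=\tfrac12\Phi'^{2}-\tfrac12\Phi^{2}+\Lambda e^{\Phi}$, $P'=-\tfrac1r\Phi'^{2}\le0$, combined with the fact that for $\Lambda>0$ ODE uniqueness rules out degenerate interior critical points; this excludes an interior local minimum and forces $\Phi'$ to retain throughout $(0,R)$ the sign it has near $r=R$ — though closing this uniformly in $\Lambda$ needs some care.

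\emph{Main obstacle.} The Pohozhaev part is pure bookkeeping once the multiplier $r^{2}\Phi'$ is chosen, and the only positivity input it uses ($\Phi>0$ on $[0,R)$) is already available. The genuinely delicate point is the monotonicity \eqref{proizvodnaya}: as the authors stress, the additional zeroth-order term removes the purely algebraic Pohozhaev-type shortcut, so one is thrown back on the moving-plane method or a careful phase-plane analysis of the ODE, and this is where I expect the real work to lie.
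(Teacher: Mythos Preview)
Your Pohozhaev argument is exactly the paper's: multiply \eqref{radialLiouville} by $r^{2}\Phi'$, integrate, and integrate by parts once more on the right-hand side.

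For the monotonicity, both of your routes work, but the paper actually uses your \emph{alternative} ODE/Hamiltonian route rather than Gidas--Ni--Nirenberg, and carries it out so that no ``care uniformly in $\Lambda$'' is required. Concretely: assume $\Phi$ has an interior local minimum at $r_{0}$; since $\Phi(r_{0})\ge 0=\Phi(R)$, choose $r_{1}\in(r_{0},R]$ with $\Phi(r_{1})=\Phi(r_{0})$. Multiplying \eqref{radialLiouville} by $\Phi'$ and integrating from $r_{0}$ to $r_{1}$, the right-hand side vanishes because $\Phi(r_{0})=\Phi(r_{1})$, while the left-hand side equals $\tfrac12\Phi'(r_{1})^{2}+\int_{r_{0}}^{r_{1}}\tfrac1r(\Phi')^{2}\,dr\ge 0$ (using $\Phi'(r_{0})=0$). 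Hence $\Phi'\equiv 0$ on $(r_{0},r_{1})$, so $\Phi$ is constant there and, by ODE uniqueness, on all of $(0,R)$---a contradiction. The strict inequality at interior points where $\Phi'=0$ and at $r=R$ is then handled exactly as you indicate (ODE uniqueness at a double zero, and the Hopf lemma at $R$). So the monotonicity is in fact \emph{not} the delicate part here: a short energy argument suffices, and the moving-plane machinery, while perfectly valid, is heavier than needed.

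One minor misreading: the paper's remark that the Pohozhaev trick ``no longer can be used'' refers to establishing the non-degeneracy condition \eqref{nondegcondit} in the bifurcation analysis of Theorem~\ref{firstExistTH}, not to the monotonicity \eqref{proizvodnaya}.
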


\begin{proof} To show \eqref{proizvodnaya} we first prove that $\Phi(r)$ is decreasing.  Assume to the contrary that $\Phi$ takes a local minimum at $r_0$ and  there is  $r_1\in(r_0, R]$ such that $\Phi(r_0)=\Phi(r_1)$. Multiply 
\eqref{radialLiouville} by $\Phi^\prime(r)$ and integrate from $r_0$ to $r_1$ to get
\begin{equation}
\label{zchetointegriruem}
\int_{r_0}^{r_1} \left(\Phi^{\prime\prime}+\frac{1}{r} \Phi^\prime\right)\Phi^\prime\,dr=
\frac{1}{2}\Phi^2(r_1)-\Lambda e^{\Phi (r_1)}-\frac{1}{2}\Phi^2(r_0)+\Lambda e^{\Phi(r_0)}=0.
\end{equation} 
On the other hand, the left hand side of \eqref{zchetointegriruem}
is 
\begin{equation*}
\frac{1}{2}(\Phi^\prime(r_1))^2+\int_{r_0}^{r_1} \frac{1}{r} (\Phi^\prime)^2\,dr.
\end{equation*}
Therefore $\Phi$ is constant on $(r_0,r_1)$, this in turn implies that $\Phi$ is constant on $(0,R)$, a contradiction. Thus $\Phi'(r)\leq 0$ for $0<r<R$. Next, 
assuming that  $\Phi'(r_0)=0$ at a point $0<r_0<R$ we get $\Phi^{\prime\prime}(r_0)=0$. This also implies that $\Phi$ is constant
on $(0,R)$. Finally, $\Phi^\prime(R)<0$ by the Hopf Lemma. 

The equalities in \eqref{Pohozhaev} are obtained in the standard way, multiplying \eqref{radialLiouville} by the Pohozhaev 
multiplier $r^2 \Phi^\prime(r)$, then taking the integral from $0$ to $R$  and integrating by parts.
\end{proof}

\section{Necessary condition for bifurcation of traveling waves}
\label{sec:bifurcation_cond}

We seek traveling wave solutions with small velocity, i.e. solutions of \eqref{liouvillewithv}-\eqref{paraconditions2}
for small $V=\ve$, as perturbations of radially symmetric steady states given by a pair  $(\Lambda, \Phi(r))$ of solutions to \eqref{radialLiouville}-\eqref{radialLiouvilleBC}. To this end we plug the ansatz 
\begin{equation}
\label{ANSATZ}
S=\Phi +\ve \phi +\dots , \quad
\Omega=\{(x,y)=r(\cos\varphi,\sin\varphi) \mid \varphi \in [-\pi, \pi), r < R + \varepsilon \rho(\varphi) + \dots \}
\end{equation}
into \eqref{liouvillewithv}-\eqref{paraconditions2}. The function $\rho$ describes the deviation of $\Omega$ 
from the disc $B_R$ while $\phi$ describes  the deviation of the stress $S$ from $\Phi$. Note that in this first order approximation the constant $\Lambda$ is not perturbed  (see Appendix A, where it is shown that the first correction $\ve \Lambda_1=0$). 
Equating like powers of $\ve$, the terms of order $\ve$ in \eqref{liouvillewithv} yields the linear inhomogeneous equation for $\phi$ : 
\begin{equation}
\label{firstorderapproxLiouv}
-\Delta \phi+\phi=\Lambda e^\Phi(\phi -x) \quad \text{in}\ B_R. 
\end{equation}
Furthermore, equating terms of the order $\ve$ in the boundary conditions \eqref{paraconditions1}, \eqref{paraconditions2} we get
\begin{equation}
\label{firstorderapproxLiouvBC}
 \phi+\Phi^\prime(R)\rho=0,
\end{equation}
and 
\begin{equation}
\label{firstorderapproxKinemBC}
\cos\varphi = \frac{\partial \phi}{\partial \nu}+\Phi^{\prime\prime}(R)\phi +\frac{\beta}{R^2}(\rho+\rho^{\prime\prime}).
\end{equation}
To get rid of trivial solutions arising from  infinitesimal shifts of the disk $B_R$, 
we require $\rho$ to satisfy the orthogonality condition 
\begin{equation}
\label{ShiftOrthogonalCond}
\int_{-\pi}^{\pi} \rho(\varphi) \cos\varphi d\varphi =0.
\end{equation}
A solution of  \eqref{firstorderapproxLiouv}-\eqref{firstorderapproxLiouvBC} is sought in the form of the Fourier component $\phi=\tilde\phi(r)\cos \varphi $. Then, $\tilde \phi(r)$ has to satisfy
\begin{equation}
\label{equatttion81}
-\frac{1}{r}(r\tilde\phi^\prime)^\prime+\left(1+1/r^2\right)\tilde \phi=\Lambda e^\Phi(\tilde\phi -r),
\quad 0<r<R,\quad \tilde \phi(0)=0, 
\end{equation}
and, owing to \eqref{ShiftOrthogonalCond} and \eqref{firstorderapproxLiouvBC}, the boundary condition
\begin{equation}
\label{equatttion81BC}
\tilde\phi(R) =0.
\end{equation}

Now multiply \eqref{equatttion81} by $\Phi^\prime(r) r$ and integrate from $0$ to $R$. 
Taking into account that differentiating \eqref{radialLiouville}  yields
$ -\frac{1}{r}(r\Phi^{\prime\prime})^\prime+\left(1+1/r^2\right)\Phi^\prime =
\Lambda e^\Phi\Phi^\prime$, we  integrate by parts to obtain
\begin{equation}
R \Phi^\prime(R)=\Lambda \int_0^R e^{\Phi(r)}\Phi^\prime(r) r^2dr, 
\label{bifurccondition1}
\end{equation}
where we have also used \eqref{equatttion81BC} and \eqref{firstorderapproxKinemBC}. 
This is a necessary condition for existence of 
traveling waves bifurcating from the steady state curve at the point $(\Lambda,\Phi)$, and it can be equivalently rewritten using \eqref{radialLiouville}-\eqref{radialLiouvilleBC}, 
\begin{equation}
\label{bifurccondition2}
\int_0^R\Phi(r)rdr=\Lambda R^2 -\Lambda\int_0^R e^{\Phi}rdr,
\end{equation}  
or, using \eqref{Pohozhaev},
\begin{equation}
\label{bifurccondition3}
R\Phi^\prime(R)+\frac{1}{2}\left(R\Phi^\prime(R)\right)^2+\int_0^R\Phi^2(r)rdr=0. 
\end{equation}  
The following Lemma \label{est_li_zhizn_na_Marse} shows that there  exists a pair 
$(\Lambda,\Phi)\in \mathcal{A}_1$ satisfying \eqref{bifurccondition1}, and subsequent Corollary 	\ref{OchenHorCorrolaryBifurk} specifies  such a pair which is used in the proof 
of Theorem \ref{thm:main}.


\begin{lem}\label{est_li_zhizn_na_Marse}
	There are solutions $(\Lambda_-,\Phi_-)$ and $(\Lambda_+,\Phi_+)$ of \eqref{radialLiouville} -\eqref{radialLiouvilleBC} which belong to the curve $\mathcal{A}_1$ (see item (ii) of Theorem \ref{firstExistTH}) and satisfy 
	\begin{equation}
	\label{bifurccondition2bis}
	\int_0^R\Phi_{-}(r)rdr<\Lambda_{-} R^2 -\Lambda_{-}\int_0^R e^{\Phi_{-}(r)}\, rdr,
	\end{equation} 
	\begin{equation}
	\label{bifurccondition2bisbis}
	\int_0^R\Phi_{+}(r)rdr>\Lambda_{+} R^2 -\Lambda_{+}\int_0^R e^{\Phi_{+}(r)} \,rdr.
	\end{equation} 
\end{lem}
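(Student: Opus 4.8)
The plan is to establish the two inequalities by evaluating the functional
\[
G(\Lambda,\Phi):=\int_0^R\Phi(r)\,r\,dr-\Lambda R^2+\Lambda\int_0^R e^{\Phi(r)}\,r\,dr
\]
at two carefully chosen points of the analytic curve $\mathcal A_1$ and showing it takes values of opposite sign; the intermediate value theorem then produces the bifurcation point of Corollary \ref{OchenHorCorrolaryBifurk}. Equivalently, using Lemma \ref{lem:properties_steady_states}, one may work with the representation
\[
G(\Lambda,\Phi)=-R\Phi^\prime(R)-\tfrac12\bigl(R\Phi^\prime(R)\bigr)^2-\int_0^R\Phi^2(r)\,r\,dr,
\]
so that \eqref{bifurccondition2bis} is $G<0$ and \eqref{bifurccondition2bisbis} is $G>0$. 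The first (easy) endpoint is near the trivial solution: along $\mathcal A_0\subset\mathcal A_1$ as $\Lambda\to 0^+$ the minimal solution satisfies $\Phi\to 0$ in $C([0,R])$, and a linearization (solving $-\Delta\Phi_\Lambda+\Phi_\Lambda=\Lambda$) shows $\Phi=\Lambda\Psi_1+o(\Lambda)$ with $\Psi_1>0$ the solution of $-\Delta\Psi_1+\Psi_1=1$, $\Psi_1|_{\partial B_R}=0$. Substituting into $G$, the quadratic terms are $O(\Lambda^2)$ and the leading term is $\Lambda\bigl(\int_0^R\Psi_1\,r\,dr-R^2+\int_0^R r\,dr\bigr)=\Lambda\bigl(\int_0^R\Psi_1\,r\,dr-\tfrac{R^2}{2}\bigr)$; since $0<\Psi_1<\tfrac12(R^2-r^2)$ pointwise (compare with the torsion function, whose Laplacian is $-2<-1$), this bracket is negative, so $G<0$ for small $\Lambda$. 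This gives $(\Lambda_-,\Phi_-)$.

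The second (hard) endpoint must be found near the ``other end'' of $\mathcal A_1$, where the solutions are large. The natural candidate is to approach the degeneracy threshold where $\sigma_2(\Lambda,\Phi)\to 0$ — precisely the boundary of the set $\mathcal A$ in \eqref{DefofmathcalA} — and to exploit the sharp lower bound on the second eigenvalue of the linearized Liouville operator from \cite{Suz1992} mentioned in the introduction. Concretely, I would argue that along $\mathcal A_1$ the quantity $\int_0^R e^\Phi\,r\,dr$ becomes large (this is where \eqref{exponentblowup1} and the structure of $\mathcal K$ from Theorem \ref{firstExistTH}(i) enter, together with the fact that $\mathcal A_1$ extends beyond the minimal branch past the turning point $\Lambda_0$), while simultaneously $R|\Phi^\prime(R)|$ and $\int_0^R\Phi^2 r\,dr$ are controlled. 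Using the form \eqref{bifurccondition2} of $G$: since $\Phi\ge 0$ and $\Phi(R)=0$, we have $\Phi^\prime(R)<0$, and $R|\Phi^\prime(R)|=\Lambda\int_0^R e^\Phi(-\Phi^\prime)r^2\,dr\le \Lambda R\int_0^R e^\Phi(-\Phi^\prime)r\,dr = \Lambda R\int_0^R e^\Phi\,r\,dr\cdot(\text{bounded factor})$ after integrating by parts the term $\int e^\Phi(-\Phi^\prime)$; the point is that on the branch where $\Phi(0)$ is large the ``concentration'' of $e^\Phi$ near $r=0$ forces $\Lambda\int_0^R e^\Phi\,r\,dr$ to dominate $\Lambda R^2$ and to dominate $\int_0^R\Phi\,r\,dr$ (the latter grows only linearly in $\Phi(0)$ while $\int e^\Phi$ grows exponentially), hence $G>0$. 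This yields $(\Lambda_+,\Phi_+)$, and since both points lie on the connected analytic curve $\mathcal A_1$, continuity of $G$ along it finishes the argument.

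The main obstacle is the second endpoint: one needs to locate a genuine point of $\mathcal A_1$ (not merely of $\mathcal K$) at which $G>0$, which requires simultaneously (a) knowing that $\mathcal A_1$, being the connected component of $\mathcal A$ containing $\mathcal A_0$, actually reaches solutions with $\int e^\Phi$ large — so one must track the second eigenvalue $\sigma_2(\Lambda,\Phi)$ along the branch and invoke the Suzuki bound \cite{Suz1992} to guarantee $\sigma_2>0$ persists far enough — and (b) obtaining the delicate comparison of the exponential mass $\Lambda\int e^\Phi\,r\,dr$ against the polynomial quantities $\Lambda R^2$, $\int\Phi\,r\,dr$, $(R\Phi^\prime(R))^2$ uniformly along the curve. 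Controlling the geometry of $\mathcal A_1$ past the turning point $\Lambda_0$, and ruling out that $G$ stays nonpositive all along it, is where the real work lies; the small-$\Lambda$ side is essentially a one-line perturbation computation by comparison with the torsion function.
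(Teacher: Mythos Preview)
Your plan matches the paper's overall strategy, but both halves need repair. On the small-$\Lambda$ side, the torsion comparison $\Psi_1<\tfrac12(R^2-r^2)$ only yields $\int_0^R\Psi_1\,r\,dr<R^4/8$, which is below $R^2/2$ only for $R<2$. The paper instead uses the immediate bound $\Psi_1<1$ (the constant $1$ solves $-\Delta u+u=1$, and $\Psi_1$ vanishes on the boundary), giving $\int_0^R\Psi_1\,r\,dr<R^2/2$ for every $R$. This is a trivial fix.

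The genuine gap is the one you flag yourself: your concentration heuristic presumes that $\mathcal A_1$ reaches arbitrarily large $\int e^\Phi$, but $\mathcal A_1$ is only a connected component of $\{\sigma_2>0\}$ and may terminate at $\sigma_2=0$ with moderate mass. The paper closes this via a dichotomy together with a case split on $R$. If $\sup_{\mathcal A_1}\int e^\Phi=\infty$, the right side of \eqref{bifurccondition2} becomes negative and one is done; otherwise $\inf_{\mathcal A_1}\sigma_2=0$, and Suzuki's Proposition~2 gives $\Lambda\int_0^R e^\Phi\,r\,dr\ge 4$ at some $(\Lambda,\Phi)\in\mathcal A_1$. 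Assuming $G\le 0$ there, the Pohozhaev form \eqref{bifurccondition3} forces $|R\Phi'(R)|<2$ and $\int_0^R\Phi^2\, r\,dr\le 1/2$; combined with the identity $\Lambda\int e^\Phi\, r\,dr=\int\Phi\,r\,dr-R\Phi'(R)$ this gives $\int\Phi\,r\,dr>2$, while Cauchy--Schwarz gives $\int\Phi\,r\,dr\le R/2$, a contradiction unless $R>4$. For $R\ge 4$ the paper does \emph{not} follow the branch past the turning point at all: it shows $\Lambda_0\ge 1/e$, takes the minimal solution at $\Lambda=1/e$, bounds it below by the explicit linear subsolution $w(r)=\tfrac{1}{e-1}\bigl(1-I_0(\theta r)/I_0(\theta R)\bigr)$ with $\theta=\sqrt{1-1/e}$, and verifies $\int_0^R w^2\,r\,dr>1/2$ for $R\ge 4$ via a closed-form Bessel identity and a numerical evaluation at $R=4$. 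This yields $G>0$ already on $\mathcal A_0\subset\mathcal A_1$. None of these quantitative ingredients is supplied by the exponential-versus-polynomial heuristic.
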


\begin{proof} Let us consider minimal solutions in $\mathcal{A}_1$ corresponding to small $\Lambda>0$, and small $\|\Phi\|_{L^\infty(B_R)}$.
We show that the left hand side of 
\eqref{bifurccondition2}
 is strictly less than its right hand side
by considering the leading term of the asymptotic expansion of solutions in the limit
$\Lambda\to 0$.
Linearizing  \eqref{radialLiouville} -\eqref{radialLiouvilleBC} 
about $(0,0)$
we get
\begin{equation}
\label{LegkayaAsimptotika}
\Phi=\Lambda g+O(\Lambda^2),\quad  \text{where $g$ solves} \  -\frac{1}{r}(rg^\prime)^\prime+g=1,\ r<R,\quad  g^\prime(0)=g(R)=0.
\end{equation}  
By the maximum principle $0<g(r)<1$ for $r<R$,  and therefore on the left hand side of 
\eqref{bifurccondition2} we have
$$
\int_0^R\Phi(r)\,rdr=\Lambda \int_0^Rg\,rdr+O(\Lambda^2)\leq \Lambda (R^2/2-\delta)+O(\Lambda^2),
$$
for some $\delta>0$ independent of $\Lambda$, while on the right hand side of \eqref{bifurccondition2}, 
$$
\Lambda R^2 -\Lambda\int_0^R e^{\Phi}rdr=\Lambda R^2- \Lambda \int_0^R(1+\Lambda g)rdr+O(\Lambda^2)=\Lambda R^2/2 +O(\Lambda^2).
$$

Next we  show existence of $(\Lambda_+,\Phi_+)\in \mathcal{A}_1$ satisfying 
\eqref{bifurccondition2bisbis}.
%
%

\emph{Case 1: $R\leq 4$.} According to items (i) and (ii) of Theorem \ref{firstExistTH}, the curve $\mathcal{A}_1$ satisfies
\begin{equation}
\label{exponentunbounded}
\sup\left\{\int_0^R e^\Phi \, rdr \,| \, (\Lambda,\Phi)\in \mathcal{A}_1  \right\}=\infty,
\end{equation}
or, if this is false, at least
\begin{equation}
\label{seceigenvaluezero}
\inf\left\{\sigma_2(\Lambda,\Phi) \,| \, (\Lambda,\Phi)\in \mathcal{A}_1  \right\}=0.
\end{equation}

If  \eqref{exponentunbounded} holds then right hand side \eqref{bifurccondition2} becomes negative, while the left hand side is positive, and we are done. 

Now consider the case that \eqref{seceigenvaluezero} holds. By continuity of $\sigma_2(\Lambda,\Phi)$ there is a 
pair $(\Lambda,\Phi)\in \mathcal{K}_1$ such that the second eigenvalue of 
\eqref{linearizedSW} is less than $1$. In other words, the second eigenvalue of   
\begin{equation}
\label{linearizedSW1}
-\Delta v-\Lambda e^\Phi v=\sigma v \quad\text{in}\  B_R, \quad v=0 \quad\text{on}\ \partial B_R 
 \end{equation}
is negative. Then, according to Proposition 2 in \cite{Suz1992}, we have
 \begin{equation*}
 \Lambda \int_0^R e^{\Phi}rdr\geq 4.
 \end{equation*} 
Assume by contradiction, that the right hand side of \eqref{bifurccondition2} is  bigger than or equal to its left hand side, then in view of the equivalent reformulation \eqref{bifurccondition3}
of \eqref{bifurccondition2}, we find
\begin{equation}
\label{QQQQQUUUU01}
R\Phi^\prime(R)+\frac{1}{2}\left(R\Phi^\prime(R)\right)^2+\int_0^R\Phi^2(r)rdr<0,
\end{equation} 
which in turn implies that 
\begin{equation}
\label{QQQQQUUUU02}
R\Phi^\prime(R)>-2\quad\text{and}\quad\int_0^R\Phi^2(r)rdr\leq 1/2. 
\end{equation}
On the other hand, multiplying \eqref{radialLiouville} by $r$ and integrating we find
\begin{equation}
\label{QQQQQUUUU03}
\Lambda \int_0^R e^{\Phi}rdr=\int_0^R \Phi\, rdr-R\Phi^\prime(R).
\end{equation}
Combining \eqref{QQQQQUUUU03} with \eqref{QQQQQUUUU01} and the first inequality in \eqref{QQQQQUUUU02} we get
\begin{equation}
\label{QQQQQUUUU04}
\int_0^R \Phi \,rdr>2.
\end{equation}
Finally, applying the Cauchy-Schwarz inequality and using the second inequality in \eqref{QQQQQUUUU02} leads to
\begin{equation}
\label{QQQQQUUUU05}
\int_0^R \Phi\, rdr\leq \frac{R}{\sqrt{2}}\left(\int_0^R \Phi\, rdr\right)^{1/2}\leq \frac{R}{2}.
\end{equation}
Thus, \eqref{QQQQQUUUU04} and \eqref{QQQQQUUUU05}
yield the lower bound for the radius, $R>4$, so that the Lemma is proved for $R\leq 4$.

\emph{Case 2: $R\geq 4$}. Observe that the maximal value $\Lambda_0$ of $\Lambda$ admits the lower bound $\Lambda_0\geq 1/e$. Indeed, considering the initial value problem 
\begin{equation}
-q^{\prime\prime}-\frac{1}{r}q^\prime +q=e^{q-1},\quad r>0, ~~~
\quad q(0)=A, q^\prime(0)=0,
\end{equation}
we find that $q(R)$ continuously varies from $-\infty$ to $1$ as $A$ decreases from $+\infty$ to $1$. Therefore there exists some $A>1$ such that $\Phi=q$ is a solution of \eqref{radialLiouville}-\eqref{radialLiouvilleBC}. Now consider the minimal solution $\Phi$ 
of \eqref{radialLiouville}-\eqref{radialLiouvilleBC} with $\Lambda=1/e$
 and introduce the function  $w$ solving the auxiliary problem 
\begin{equation}\label{BesselMod}
-w^{\prime\prime}-\frac{1}{r}w^\prime +w=(w+1)/e,\quad r>0,~~~
\quad  w^\prime(0)=w(R)=0.
\end{equation}
Since $w$ is a positive subsolution of \eqref{radialLiouville}-\eqref{radialLiouvilleBC}, 
we have
$$
\Phi\geq w\quad \text{ for } \ r<R.
$$ 
Therefore, in order to prove the inequality 
\begin{equation}
\label{EsliHochetsyaToMozhno01}
R\Phi^\prime(R)+\frac{1}{2}\left(R\Phi^\prime(R)\right)^2+\int_0^R\Phi^2(r)rdr\geq 0,
\end{equation} 
it suffices to show that 
\begin{equation}
\label{EsliHochetsyaToMozhno02}
\int_0^R w^2(r)\, rdr\geq 1/2.
\end{equation} 
The solution $w$ of \eqref{BesselMod} is explicitly given by
$$
w(r)=\frac{1}{e-1}\left\{1-\frac{I_0 (\theta r)}{I_0(\theta R)}\right\},  
$$
where $\theta=\sqrt{1-1/e}$, and $I_0$ is the modified Bessel function of the first kind. Since 
$$
J(R):=\int_0^R w^2\, r dr =\frac{1}{(e-1)^2}\left\{\frac{R^2}{2}-2R\frac{I_1(\theta R)}{\theta I_0 (\theta R)}
+\frac{R^2}{2 I_0(\theta R)^2}\left( I_0 (\theta R)^2-I_1 (\theta R)^2\right),
\right\}
$$
is increasing in $R$ and
$$
J(4)=0.78... >1/2,
$$	
the inequality \eqref{EsliHochetsyaToMozhno02}	holds for $R\geq 4$, and 
so does \eqref{EsliHochetsyaToMozhno01}. This completes  the proof of Lemma \ref{est_li_zhizn_na_Marse}.
\end{proof}

\begin{cor} 
	\label{OchenHorCorrolaryBifurk}
	There exists a pair $(\Lambda_0,\Phi_0)\in \mathcal{A}_1$  satisfying  the necessary bifurcation condition 
	\eqref{bifurccondition1}. Moreover, in an arbitrary neighborhood of  this pair
$(\Lambda_0,\Phi_0)$ 
we find $(\Lambda_\pm,\Phi_\pm)\in \mathcal{A}_1$ such that 
\begin{equation}
 R\Phi_{-}^\prime(R)<\Lambda_{-} \int_0^R e^{\Phi_{-}(r)}\Phi_{-}^\prime(r) r^2dr, 
 \quad 
 R\Phi_{+}^\prime(R)>\Lambda_{+} \int_0^R e^{\Phi_{+}(r)}\Phi_{+}^\prime(r) r^2dr
 \label{bifurccondition_dif_sign}
 \end{equation}
 The condition  \eqref{bifurccondition_dif_sign}  shows that  $(\Lambda_0,\Phi_0)$ is  a robust root of the equation \eqref{bifurccondition1}.
\end{cor}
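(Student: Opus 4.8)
The plan is to combine Lemma \ref{est_li_zhizn_na_Marse} with an intermediate-value argument along the analytic curve $\mathcal{A}_1$. First I would introduce the scalar functional
\begin{equation*}
F(\Lambda,\Phi):=\Lambda\int_0^R e^{\Phi(r)}\Phi^\prime(r)\,r^2\,dr-R\Phi^\prime(R),
\end{equation*}
so that the necessary bifurcation condition \eqref{bifurccondition1} reads precisely $F(\Lambda,\Phi)=0$, and the two inequalities \eqref{bifurccondition_dif_sign} say that $F(\Lambda_-,\Phi_-)<0$ and $F(\Lambda_+,\Phi_+)>0$. Using the equivalences \eqref{bifurccondition1}--\eqref{bifurccondition3} established just above the corollary, $F(\Lambda,\Phi)$ has the same sign as $\Lambda R^2-\Lambda\int_0^R e^{\Phi}r\,dr-\int_0^R\Phi\,r\,dr$, which is exactly the quantity whose sign is controlled by the strict inequalities \eqref{bifurccondition2bis} and \eqref{bifurccondition2bisbis} of Lemma \ref{est_li_zhizn_na_Marse}. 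Hence Lemma \ref{est_li_zhizn_na_Marse} already furnishes points $(\Lambda_-,\Phi_-)$ and $(\Lambda_+,\Phi_+)$ on $\mathcal{A}_1$ at which $F$ takes values of opposite sign.

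Next I would exploit the structure of $\mathcal{A}_1$ from item (ii) of Theorem \ref{firstExistTH}: it is an \emph{analytic} (in particular connected) curve in $\mathbb{R}\times C([0,R])$, so it admits a continuous parametrization $s\mapsto(\Lambda(s),\Phi(s))$. Along this parametrization the map $s\mapsto F(\Lambda(s),\Phi(s))$ is continuous — indeed the dependence of $\Phi$ on $s$ is analytic in $C([0,R])$, and by elliptic regularity this upgrades to analytic dependence in $C^1([0,R])$ (or $C^{2,\gamma}$), which is what is needed to make sense of $\Phi^\prime(R)$ and of the integrand $e^{\Phi}\Phi^\prime r^2$ continuously. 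Therefore, by the intermediate value theorem applied between the parameter values corresponding to $(\Lambda_-,\Phi_-)$ and $(\Lambda_+,\Phi_+)$, there is a point $(\Lambda_0,\Phi_0)\in\mathcal{A}_1$ with $F(\Lambda_0,\Phi_0)=0$, i.e. satisfying \eqref{bifurccondition1}. For the "moreover" part, I would simply note that since $F$ changes sign along the curve through $(\Lambda_0,\Phi_0)$, in every neighborhood of $(\Lambda_0,\Phi_0)$ on $\mathcal{A}_1$ one finds points on either side of the zero set where $F<0$ and $F>0$ respectively; these are the claimed $(\Lambda_\pm,\Phi_\pm)$, and the strict sign change is exactly what is meant by calling $(\Lambda_0,\Phi_0)$ a robust root.

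The main obstacle I anticipate is not the intermediate-value step itself but justifying the \emph{continuity of the boundary-derivative functional} $\Phi\mapsto\Phi^\prime(R)$ along $\mathcal{A}_1$ with the right regularity, and making sure the connected curve $\mathcal{A}_1$ actually \emph{contains} both sign-defining points rather than merely containing points with the Lemma's integral inequalities — one must check that the two reformulations ($F$ versus the integral expression in Lemma \ref{est_li_zhizn_na_Marse}) are genuinely equivalent on the whole curve, using \eqref{radialLiouville}--\eqref{radialLiouvilleBC} and the Pohozhaev identities \eqref{Pohozhaev}, with no hidden sign or domain restriction. A secondary subtlety is that the two points from Lemma \ref{est_li_zhizn_na_Marse} might a priori lie on different "branches" if $\mathcal{A}_1$ doubled back; but analyticity and connectedness of $\mathcal{A}_1$ guarantee a continuous path between them within $\mathcal{A}_1$, so the argument goes through.
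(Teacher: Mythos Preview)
Your proposal is correct and follows exactly the paper's approach: the paper's proof is the single sentence ``The result follows from Lemma~\ref{est_li_zhizn_na_Marse} thanks to analyticity of the curve $\mathcal{A}_1$ and to the fact that $\mathcal{A}_1$ is connected,'' and you have simply unpacked that sentence. One small sharpening: your functional $F$ is not merely of the same sign as $\Lambda R^2-\Lambda\int_0^R e^{\Phi}r\,dr-\int_0^R\Phi\,r\,dr$, it is \emph{equal} to it (integrate $e^{\Phi}\Phi' r^2$ by parts and use $-R\Phi'(R)+\int_0^R\Phi\,r\,dr=\Lambda\int_0^R e^{\Phi}r\,dr$), which removes any worry about hidden sign restrictions; and for the ``moreover'' part you should state explicitly that analyticity of $s\mapsto F(\Lambda(s),\Phi(s))$ forces the zero set to be discrete, so among the zeros between the two sign-defining points there is one at which $F$ genuinely changes sign --- mere continuity and the intermediate value theorem do not by themselves guarantee a sign-changing zero.
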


\begin{proof} 
The result follows from Lemma	\ref{est_li_zhizn_na_Marse} thanks 
 to analyticity of the curve  $\mathcal {A}_1$ and to the fact that $\mathcal {A}_1$ is connected.
\end{proof}

\section{Fourier analysis of the linearized operator}
\label{sec:fourier}

To construct solutions of problem 
\eqref{liouvillewithv}-\eqref{paraconditions1}
as perturbations of radially symmetric steady states we need to study the
properties of the linearized operator of this problem. Namely, we  
consider the linearized spectral problem 
\begin{equation}
\label{linearizedSpect}
-\Delta w+w-\Lambda e^{\Phi}w =\sigma w\quad\text{in}\  B_R, 
\quad w=0 \quad\text{on}\ \partial B_R ,
\end{equation}
where $(\Lambda,\Phi)$ is a pair satisfying 
\eqref{radialLiouville}-\eqref{radialLiouvilleBC}.  

\begin{prop} 
	\label{Spectrum_Liouville_linaerized}
	For any $n, l=1,2,\dots$, the $l^{th}$ eigenvalue $\sigma_{nl}$ corresponding to the $n^{th}$ Fourier modes 
$w_{nl}(r)\cos n\varphi$ and $w_{nl}(r)\sin n\varphi$,
\begin{equation}
\label{linearizedSpecRad}
- \frac1r(r w^{\prime}_{nl})'+\frac{n^2}{r^2}w_{nl}+w_{nl}-\Lambda e^{\Phi}w_{nl} =\sigma_{nl} w_{nl}, \quad 0<r<R, 
\quad ~~~ w_{nl}(0)=w_{nl}(R)=0,
\end{equation}
is positive, $\sigma_{nl}>0$.
\end{prop}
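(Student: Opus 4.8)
The plan is to reduce the statement to the single inequality $\sigma_{11}>0$ and then deduce it from a Sturm-type comparison of the first $n=1$ eigenfunction with $\Phi'$. First, by the Rayleigh--Ritz (min--max) characterization each $\sigma_{nl}$ is obtained by minimizing, over $l$-dimensional subspaces of the relevant weighted space on $(0,R)$, the quotient of $\int_0^R\bigl((w')^2+\frac{n^2}{r^2}w^2+w^2-\Lambda e^{\Phi}w^2\bigr)\,r\,dr$ by $\int_0^R w^2\,r\,dr$; since the only $n$-dependent term $\frac{n^2}{r^2}w^2$ is nondecreasing in $n$, the quotient is nondecreasing in $n$, so $\sigma_{nl}\ge\sigma_{1l}$, and monotonicity in $l$ gives $\sigma_{1l}\ge\sigma_{11}$. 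Hence it suffices to prove $\sigma_{11}>0$.

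Next comes the key structural observation: differentiating the radial Liouville equation \eqref{radialLiouville} in $r$ shows that $w=\Phi'$ solves the $n=1$ radial equation in \eqref{linearizedSpecRad} with $\sigma=0$, i.e. $-\frac{1}{r}\bigl(r(\Phi')'\bigr)'+\frac{1}{r^2}\Phi'+\Phi'-\Lambda e^{\Phi}\Phi'=0$ on $(0,R)$, together with $\Phi'(0)=0$. However, by Lemma~\ref{lem:properties_steady_states} one has $\Phi'(r)<0$ on \emph{all} of $(0,R]$; in particular $\Phi'$ keeps a fixed sign on $(0,R)$ and does not vanish at $r=R$, so it is not itself an eigenfunction of the $n=1$ problem.

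Finally, let $w_{11}$ be the first eigenfunction of the $n=1$ mode, normalized so that $w_{11}>0$ on $(0,R)$ (the principal eigenfunction may be taken of one sign), with $w_{11}(0)=w_{11}(R)=0$; since the equation is regular at $r=R$, uniqueness for the linear ODE forces $w_{11}'(R)<0$. Multiplying the equation for $\Phi'$ by $r\,w_{11}$, the equation for $w_{11}$ by $r\,\Phi'$, subtracting (the zeroth-order terms cancel), and integrating over $(0,R)$ — the boundary contributions at $r=0$ vanish thanks to the explicit factor $r$ together with the smoothness of $\Phi$ and $w_{11}$, and $w_{11}(R)=0$ eliminates the $\Phi''$ term at $r=R$ — one arrives at
\begin{equation*}
R\,\Phi'(R)\,w_{11}'(R)=-\,\sigma_{11}\int_0^R \Phi'(r)\,w_{11}(r)\,r\,dr .
\end{equation*}
The left-hand side is the product of $R>0$ with two negative numbers, hence positive, whereas $\int_0^R\Phi'\,w_{11}\,r\,dr<0$ because $\Phi'<0$ and $w_{11}>0$ on $(0,R)$; therefore $\sigma_{11}>0$, which finishes the proof.

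\textbf{Main obstacle.} The only genuinely nontrivial ingredient is precisely the structural fact that $\Phi'$ solves the $n=1$ linearized equation at $\sigma=0$ while keeping a \emph{strict} sign up to and including $r=R$ (the latter via \eqref{proizvodnaya} of Lemma~\ref{lem:properties_steady_states}, which itself uses the Hopf lemma). Everything else — Rayleigh monotonicity in $n$ and $l$, simplicity and one-sign property of the first $n=1$ eigenfunction, $w_{11}'(R)\neq0$, and the regularity at the origin needed to discard the boundary terms in the integration by parts — is routine Sturm--Liouville bookkeeping.
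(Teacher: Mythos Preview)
Your argument is correct, and the Lagrange identity
\[
R\,\Phi'(R)\,w_{11}'(R)=-\,\sigma_{11}\int_0^R \Phi'(r)\,w_{11}(r)\,r\,dr
\]
indeed follows cleanly since all zeroth-order terms (the $n^2/r^2$, the constant, and the $\Lambda e^{\Phi}$ contributions) cancel in the subtraction, while the boundary terms at $r=0$ vanish because $w_{11}(r)\sim r$ and $\Phi'(r)\sim cr$ there. Together with the sign information $\Phi'(R)<0$, $w_{11}'(R)<0$ and $\int_0^R\Phi'w_{11}\,r\,dr<0$, this forces $\sigma_{11}>0$, and the Rayleigh--Ritz monotonicity in $n$ and $l$ finishes the job.

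The paper takes a different route. Instead of reducing to $\sigma_{11}$ and invoking a Lagrange identity, it uses a ground-state (Picone-type) factorization: setting $\Theta_\delta=\delta-\Phi'>0$ and writing $w_{nl}=\Theta_\delta\,\tilde w_{nl,\delta}$, it derives the energy identity
\[
\sigma_{nl}\int_0^R w_{nl}^2\,r\,dr=\int_0^R(\Theta_\delta\,\tilde w_{nl,\delta}')^2\,r\,dr+\int_0^R\Bigl(\Upsilon_n\,\delta-\tfrac{n^2-1}{r^2}\Phi'\Bigr)\Theta_\delta\,\tilde w_{nl,\delta}^2\,r\,dr,
\]
and then lets $\delta\to 0$ to get $\sigma_{nl}\geq 0$ directly for all $n,l$; equality would force $w_{nl}$ to be a constant multiple of $\Phi'$, contradicting $w_{nl}(R)=0$. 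Both arguments hinge on the same structural fact you isolated---that $\Phi'$ solves the $n=1$ linearized equation at $\sigma=0$ and keeps a strict sign up to $r=R$---but yours is more elementary (no $\delta$-regularization, just classical Sturm comparison), while the paper's yields a quantitative energy identity valid for every $n$ at once, and the factorization trick is reused later in the paper (e.g.\ in Lemma~\ref{lemma_pro_sobstvennye_zn_steady_st_liouv}).
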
	

\begin{proof}
For each $\delta>0$ and any solution $\Phi$ of \eqref{radialLiouville}-\eqref{radialLiouvilleBC}, the function $\Theta_\delta:r\mapsto \delta - \Phi^\prime(r)$ is strictly positive and satisfies 
(by differentiating \eqref{radialLiouville})
\begin{equation}\label{eq:Theta_plain}
-\frac1r\left(r\Theta_\delta'\right)' +\left(1+\frac{1}{r^2}-\Lambda e^{\Phi}\right)\Theta_\delta  = \left(1+\frac{1}{r^2}-\Lambda e^{\Phi}\right)\delta,\quad 0<r<R
\end{equation}
or, for any given $n$,
\begin{equation} \label{eq:Theta_n}
 -\frac1r\left(r \Theta_\delta'\right)' + \left(1+\frac{n^2}{r^2}-\Lambda e^{\Phi}\right)\Theta_\delta = \left(1+\frac{n^2}{r^2}-\Lambda e^{\Phi}\right)\delta - \frac{n^2-1}{r^2}\Phi^\prime,\quad 0<r<R.
\end{equation}
Multiplying \eqref{linearizedSpecRad} by $r w_{nl}$ and integrating from $0$ to $R$ yields
\begin{equation} \label{eq:energy_spectral_lin}
 \int_0^R (w_{nl}')^2 r \,dr + \int_0^R \Upsilon_{n} w_{nl}^2 r\,dr =\sigma_{nl}  \int_0^R  w_{nl}^2 r\,dr
\end{equation}
where we introduced the abbreviation
\begin{equation*} 
\Upsilon_{n} = 1+\frac{n^2}{r^2} - \Lambda e^{\Phi} .
\end{equation*}

We represent $w_{nl}$ as $\Theta_\delta \tilde{w}_{nl,\delta}$ and multiplying \eqref{eq:Theta_n} by $\Theta_\delta^2 \tilde{w}_{nl,\delta}^2 r$, integrate from $0$ to $R$. Integrating by parts in the  first term we get
\begin{equation}\label{eq:Theta_energy}
  \int_0^R r\Theta_\delta'(\Theta_\delta \tilde{w}_{nl,\delta}^2)' \,dr
  + \int_0^R  \Upsilon_{n}( w_{nl,\delta}^2 - \delta \Theta_\delta \tilde{w}_{nl,\delta}^2) r \,dr
  = -\int_0^R \frac{n^2-1}{r}\Phi^\prime\Theta_\delta \tilde{w}_{nl,\delta}^2 \,dr.
\end{equation}
Subtracting \eqref{eq:Theta_energy} from \eqref{eq:energy_spectral_lin}, we find 
\begin{equation}
\label{factor_eigen_f_Liouville_lin}
\sigma_{nl}  \int_0^R  w_{nl}^2 r\,dr= \int_0^R(\Theta_\delta \tilde{w}_{nl,\delta}^\prime)^2\, rdr
                  +  \int_0^R \left( \Upsilon_{n} \delta 
                  -  \frac{n^2-1}{r^2}\Phi^\prime\right)\Theta_\delta \tilde{w}_{nl,\delta}^2 \, rdr.
\end{equation}
Now pass to the limit in this equality as $\delta \to 0$.  Observing that the $\liminf$ as $\delta\to +0$ of the last term in \eqref{factor_eigen_f_Liouville_lin} is nonnegative we obtain that $\sigma_{nl}\geq 0$ and if $\sigma_{ln}=0$,
then $w_{nl}= -\gamma \Phi^\prime(r) $, where $\gamma$ is a constant. In the latter case 
$w_{nl}(R)\not=0$, contradiction. Thus $\sigma_{nl}> 0$.
\end{proof}

\begin{cor}
	\label{cor_exist_uniqu_LineraLiouv} For each $f\in H^{1/2}(\partial B_R)$ satisfying
		$$
		\int_{-\pi}^{\pi} f(R,\varphi)\, d\varphi=0,
		$$
		the problem
		\begin{equation}
		\label{NonHomDirich_LiouvLinearize}
		-\Delta g+g-\Lambda e^\Phi g=0 \quad\text{in}\ B_R,\quad g=f\quad\text{on}\ \partial B_R   
		\end{equation}
		has a solution. Moreover  precisely one such a solution  is orthogonal in $L^2(B_R)$ to all radially symmetric functions $w(r)$.   
\end{cor}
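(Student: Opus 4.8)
The plan is to build the solution by combining a standard harmonic-type extension with a solvability result for the homogeneous operator $L:=-\Delta+1-\Lambda e^\Phi$ on the orthogonal complement of the radial functions. First I would pick any convenient lifting $G_0\in H^1(B_R)$ of the boundary datum $f$, for instance the solution of $-\Delta G_0+G_0=0$ in $B_R$ with $G_0=f$ on $\partial B_R$; by the hypothesis $\int_{-\pi}^{\pi} f(R,\varphi)\,d\varphi=0$ the zeroth Fourier mode of $G_0$ vanishes, so $G_0$ is $L^2(B_R)$-orthogonal to every radially symmetric function. Writing $g=G_0+h$, the problem reduces to finding $h\in H^1_0(B_R)$ with $Lh=-LG_0=:F$ in $B_R$, where $F\in L^2(B_R)$ (indeed $F$ has no radial Fourier component either, since $L$ commutes with rotations and $G_0$ has none).

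The next step is to solve $Lh=F$ on the subspace $X$ of $H^1_0(B_R)$ consisting of functions orthogonal to all radial functions. By Proposition \ref{Spectrum_Liouville_linaerized}, for every Fourier mode $n\ge 1$ all eigenvalues $\sigma_{nl}$ of $L$ are strictly positive; hence the quadratic form $\int_{B_R}(|\nabla h|^2+h^2-\Lambda e^\Phi h^2)\,dx$ is coercive on $X$ (it is bounded below by $\min_{n\ge 1,\,l}\sigma_{nl}\,\|h\|_{L^2}^2>0$, the minimum being attained and positive by compactness of the resolvent and the fact that $\sigma_{nl}\to\infty$ as $n+l\to\infty$). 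Since $L$ preserves $X$ — again because it commutes with rotations, so it maps the $n=0$ sector and its complement into themselves — the Lax–Milgram theorem on $X$ produces a unique $h\in X$ with $Lh=F$ in the weak sense, and elliptic regularity upgrades it. Setting $g=G_0+h$ gives a solution of \eqref{NonHomDirich_LiouvLinearize}, and by construction $g$ is orthogonal in $L^2(B_R)$ to all radial functions.

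For uniqueness of the radially-orthogonal solution: if $g_1,g_2$ are two such solutions, then $w:=g_1-g_2\in H^1_0(B_R)$ solves $Lw=0$ and is $L^2$-orthogonal to all radial functions, i.e. $w\in X$; but $L$ is injective on $X$ by the strict positivity of all non-radial eigenvalues, so $w=0$. I would also remark that the general solution is obtained by adding to this distinguished $g$ any element of $\ker L$ consisting of radial functions — and note that $\ker L$ could indeed contain a radial eigenfunction precisely when some $\sigma_{0l}=0$, which is exactly the degenerate situation discussed around \eqref{nondegcondit}; this does not affect existence or the uniqueness statement as phrased.

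The only mild obstacle is bookkeeping the Fourier decomposition rigorously: one must check that $L$ genuinely leaves invariant the closed subspace spanned by $\{n\ge 1\}$-modes (immediate since $\Lambda e^\Phi$ is radial, so multiplication by it is a Fourier-mode-preserving, hence $X$-preserving, bounded operator), and that the infimum of the non-radial eigenvalues is strictly positive rather than merely nonnegative — which follows because for each fixed $n$ the eigenvalues $\sigma_{nl}$ increase without bound in $l$, for large $n$ the term $n^2/r^2$ forces $\sigma_{nl}\ge n^2/R^2\cdot(\text{const})\to\infty$, and there are only finitely many $(n,l)$ with $\sigma_{nl}$ below any given threshold, each of which is $>0$ by Proposition \ref{Spectrum_Liouville_linaerized}. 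Everything else is the standard Lax–Milgram-plus-regularity routine.
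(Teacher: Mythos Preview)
Your proposal is correct and follows essentially the same approach as the paper: lift the boundary datum to a function with no radial Fourier component, then solve the resulting homogeneous Dirichlet problem on the non-radial subspace using the strict positivity of all non-radial eigenvalues from Proposition~\ref{Spectrum_Liouville_linaerized}. The paper chooses the harmonic lifting $-\Delta\tilde g=0$ and then argues by separation of variables mode-by-mode, whereas you take $-\Delta G_0+G_0=0$ and invoke Lax--Milgram on the orthogonal complement of the radial functions; these are interchangeable cosmetic choices, and both proofs rest on the same key input.
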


\begin{proof}
	Introduce the solution $\tilde g$ of
	\begin{equation}
	\label{NonHomDirich_Laplace}
	-\Delta \tilde g=0 \quad\text{in}\ B_R,\quad \tilde g=f\quad\text{on}\ \partial B_R,  
	\end{equation}	
and observe that $\tilde g=\sum_{n=1}^\infty r^n(a_n \cos n\varphi+b_n\sin n\varphi)$. Then 
a solution of the problem 
$$
-\Delta (g-\tilde g)+(g-\tilde g)-\Lambda e^\Phi (g-\tilde g)=\Lambda e^\Phi\tilde g-\tilde g  \quad\text{in}\ B_R,\quad g-\tilde g=0\quad\text{on}\ \partial B_R   
$$
is obtained by separation of variables and applying Proposition \ref{Spectrum_Liouville_linaerized}.
\end{proof}

\section{Existence of solutions of the 
	problem  \eqref{liouvillewithv}-\eqref{paraconditions1}}
	\label{sec:solution_dirichlet_with_v}	

For a given $R>0$ we consider a fixed steady state $(\Lambda_0,\Phi_0)\in \mathcal{A}$. 
Using well-established techniques based on the Implicit Function Theorem, see, e.g., Chapter I in
\cite{Kor2012}, we construct a family of solutions  of \eqref{liouvillewithv}-\eqref{paraconditions1}  in domains $\Omega=\Omega_\eta$  given by  
\begin{equation}
 \Omega_\eta = \{(x,y)=r(\cos\varphi,\sin\varphi)\, |\, 0\leq r < R + \eta(\varphi), \, -\pi\leq\varphi<\pi \}  \label{eq_domain_rho}
\end{equation}
with sufficiently small $\eta\in C^{2,\gamma}(\mathbb{S}^1)$, $0<\gamma<1$, and with small, but not necessarily zero, velocity $V$. Hereafter, slightly abusing the notation, we identify the angle $\varphi\in [-\pi,\pi)$ with the corresponding point $(\cos\varphi,\sin\varphi)$ on the
unit circle $\mathbb{S}^1$. 

In order to reduce the construction to a fixed domain  
we introduce the mapping $Q_\eta:\Omega_\eta\to B_R$ defined in polar coordinates by
\begin{equation}
\label{standard_diffeomorfisme}
(r,\varphi)\mapsto Q_\eta(r,\varphi):=(r-\chi(r)\eta(\varphi),\varphi) 
\end{equation} 
where $\chi\in C^\infty(\mathbb{R}) $ is such that $\chi(r)=0$ when $r<R/3$ and $\chi(r)=1$ when 
$r>R/2$. Clearly, \eqref{standard_diffeomorfisme} defines a $C^2$-diffeomorphism whenever $\eta$ is sufficiently small together with its first and second derivatives. 

Among all perturbations $\Omega_\eta$ we single out those satisfying the area preservation condition
\begin{equation} 
\label{TotalVolPrezerv}
\frac{1}{2}\int_{-\pi}^{\pi} (R+\eta)^2\,d\varphi  =\pi R^2,
\end{equation} 
or in linear approximation
\begin{equation*} 
\int_{-\pi}^{\pi} \eta(\varphi)\,d\varphi =0. 
\end{equation*} 



The following proposition establishes existence of solutions of problem \eqref{liouvillewithv}-\eqref{paraconditions1}.
 These solutions are obtained as perturbations of the radially symmetric steady states from Section \ref{sec:steady_states}.

\begin{prop} 
	\label{DefineSolLiouvilNonrad}
There exists some $\ve>0$ such that for all $(V,\eta,z)\in \mathbb{R}\times C^{2,\gamma}(\mathbb{S}^1)\times \mathbb{R}$ in $\ve$-neighborhood $U_\ve$ of $0$ the problem \eqref{liouvillewithv}-\eqref{paraconditions1} admits a solution $\Lambda=\Lambda(V,\eta,z)$,  $S=S(x,y,V,\eta,z)$  in the domain $\Omega=\Omega_\eta$ (given by \eqref{eq_domain_rho}). Here $z$ is an auxiliary real parameter (to be specified in the proof) such that 
\begin{equation}
\label{AnalyticMap}
z\mapsto (\Lambda(0,0,z), S(\,\cdot\,,\,\cdot\,,0,0,z))\in \mathcal {A}_1\quad \text{for $|z|<\ve$} 
\end{equation}
defines an analytic parametrization of the curve  $\mathcal {A}_1$ in a neighborhood of $(\Lambda_0,\Phi_0)$.
%
Moreover, the mappings  
$$
(V,\eta,z)\mapsto \Lambda(V,\eta,z),  \quad (V,\eta,z)\mapsto P(\,\cdot\,,V,\eta,z):=
\frac{\partial S}{\partial \nu}(Q^{-1}_\eta(R\,\cdot\,),V,\eta,z)\bigl|_{\partial B_R}\bigr.
$$
belong to $C^1(U_\ve;\mathbb{R})$ and $C^1(U_\ve;C^{1,\gamma}(\mathbb{S}^1))$, respectively.  The derivatives $\partial_V\Lambda$  and 
$\partial_V P$ at $(0,0,z)=0$ are given by 
\begin{equation}
\label{partialLambda}
\partial_V\Lambda=0, 
\quad \partial_V P=\frac{\partial\phi_1}{\partial \nu}, 
\end{equation}
where $\phi_1$ is a unique, as in Corollary  \ref{cor_exist_uniqu_LineraLiouv}, solution of 
\begin{equation}
\label{phi_1_Liouv}
-\Delta \phi_1+\phi_1=\Lambda(z) e^{\Phi(r,z)}(\phi_1 -r\cos\varphi) \quad \text{in}\ B_R,
\quad \phi_1=0 \quad \text{on}\ \partial B_R,
\end{equation}
with $\Lambda(z):=\Lambda(0,0,z)$, and ${\Phi(r,z)}:=S(x,y,0,0,z)$.
The derivatives 
$\partial_\eta \Lambda$ and  $\partial_\eta P$ at $(0,0,z)$ satisfy
\begin{equation}
\label{partialPpppp}
\langle 
\partial_\eta\Lambda, \rho\rangle=0,
\quad \ \langle 
\partial_\eta P, \rho\rangle=\partial^2_{rr}\Phi(R,z)\rho + \frac{\partial\phi_2}{\partial \nu} 
\end{equation}
for $\rho$ such that $\int_{-\pi}^{\pi} \rho(\varphi)\,d\varphi =0$,  
where  $\phi_2$ is a unique, as in Corollary  \ref{cor_exist_uniqu_LineraLiouv}, solution of 
of the problem
\begin{equation}
\label{phi_2_Liouv}
-\Delta \phi_2+\phi_2=\Lambda(z) e^{\Phi(r,z)}\phi_2  \quad \text{in}\ B_R,
\quad \phi_2=-\Phi_0^\prime(R)\rho\ \quad \text{on}\ \partial B_R.
\end{equation}

%
%
%
%
%
\end{prop}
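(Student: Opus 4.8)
The plan is to apply the Implicit Function Theorem to a suitably chosen nonlinear map whose zero set parametrizes solutions of \eqref{liouvillewithv}-\eqref{paraconditions1}, in the spirit of Chapter~I of \cite{Kor2012}. First I would fix the steady state $(\Lambda_0,\Phi_0)\in\mathcal{A}$ and, using the flattening diffeomorphism $Q_\eta:\Omega_\eta\to B_R$ from \eqref{standard_diffeomorfisme}, pull back the Dirichlet problem \eqref{liouvillewithv}-\eqref{paraconditions1} to the fixed disk $B_R$. Writing $S\circ Q_\eta^{-1}=\Phi_0+w$ with $w=0$ on $\partial B_R$, and treating $(V,\eta,\Lambda)$ together with an auxiliary scalar $z$ as parameters, the equation becomes $F(w,\Lambda;V,\eta,z)=0$ where $F$ maps a neighborhood of $0$ in (a subspace of) $C^{2,\gamma}(B_R)\times\mathbb{R}$ into $C^{0,\gamma}(B_R)$, with $F$ analytic in all arguments when $V=\eta=0$ (since the nonlinearity $e^{(\cdot)}$ and the $Q_\eta$-dependence of the coefficients are analytic), and $C^1$ jointly otherwise.

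The obstruction to a naive IFT is that the linearization of \eqref{liouvillewithv} at $(\Lambda_0,\Phi_0)$ is exactly the operator $w\mapsto -\Delta w+w-\Lambda_0 e^{\Phi_0}w$ with zero Dirichlet data, which on the curve $\mathcal{A}_1$ has a one-dimensional kernel (spanned by the radial eigenfunction at a zero eigenvalue, when we are at such a point) and, more structurally, the nonlinear eigenvalue problem is genuinely degenerate because $\Lambda$ is itself unknown. The standard fix, which I would carry out, is to augment the unknowns: introduce the auxiliary parameter $z$ measuring the displacement along the kernel direction, impose an orthogonality (Lyapunov--Schmidt) normalization — e.g. $\int_{B_R} w\, w_{*}\,dx = z$ where $w_{*}$ spans the radial kernel — and solve for $(w,\Lambda)$ as functions of $(V,\eta,z)$. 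By Proposition~\ref{Spectrum_Liouville_linaerized} and Corollary~\ref{cor_exist_uniqu_LineraLiouv}, the linearized operator restricted to the complement of the radial kernel (and with $\Lambda$ adjusted) is invertible on the non-radial Fourier sectors and, after the $z$-augmentation, on the radial sector as well; this is the key point that makes the full Jacobian with respect to $(w,\Lambda)$ an isomorphism. The IFT then yields $\Lambda(V,\eta,z)$ and $S(\cdot,\cdot,V,\eta,z)$, smooth in $(V,\eta,z)$ and analytic at $V=\eta=0$; restricting to $V=\eta=0$ recovers precisely the analytic curve $\mathcal{A}_1$ near $(\Lambda_0,\Phi_0)$, giving \eqref{AnalyticMap}.

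Having constructed the solution family, the derivative formulas \eqref{partialLambda} and \eqref{partialPpppp} follow by differentiating the identity $F(w(V,\eta,z),\Lambda(V,\eta,z);V,\eta,z)=0$ and the normalization condition. Differentiating in $V$ at $(0,0,z)$: the derivative $\dot w:=\partial_V w$ solves the linearized equation with the inhomogeneity $\Lambda(z)e^{\Phi(r,z)}(-r\cos\varphi)$ coming from the $-xV$ term, i.e. $-\Delta\dot w+\dot w-\Lambda e^{\Phi}\dot w = \partial_V\Lambda\, e^{\Phi} - \Lambda e^{\Phi} r\cos\varphi$ with $\dot w=0$ on $\partial B_R$; projecting onto the radial sector and using that the inhomogeneity $-\Lambda e^{\Phi}r\cos\varphi$ is purely in the first Fourier mode forces $\partial_V\Lambda=0$, so $\dot w=\phi_1$ is exactly the solution of \eqref{phi_1_Liouv} selected by Corollary~\ref{cor_exist_uniqu_LineraLiouv}, whence $\partial_V P=\partial\phi_1/\partial\nu$. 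The $\eta$-derivative is analogous but bookkeeping-heavier: one must differentiate the pulled-back coefficients and the Dirichlet condition $w|_{\partial B_R}=-\Phi_0'(R)\eta+o(\eta)$ (from expanding $\Phi_0(R+\eta)=\Phi_0(R)+\Phi_0'(R)\eta+\dots$ and $S=0$ on $\partial\Omega_\eta$), producing the boundary value $-\Phi_0'(R)\rho$ in \eqref{phi_2_Liouv} and the extra boundary term $\partial^2_{rr}\Phi(R,z)\rho$ in the normal-derivative expansion via the chain rule through $Q_\eta^{-1}$. The condition $\int\rho\,d\varphi=0$ is what places the data in the admissible class of Corollary~\ref{cor_exist_uniqu_LineraLiouv}. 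Finally, analyticity of $\mathcal{A}_1$ as claimed at the end of Theorem~\ref{firstExistTH} is a byproduct: since $F$ is analytic at $V=\eta=0$ and the augmented Jacobian is an isomorphism, the analytic IFT applies. The main obstacle is the bookkeeping in the $\eta$-derivative — correctly tracking how the diffeomorphism $Q_\eta$ perturbs both the bulk operator and the normal derivative at the boundary — together with verifying that the Lyapunov--Schmidt normalization keeps the Jacobian invertible uniformly; the spectral input needed for the latter is already supplied by Proposition~\ref{Spectrum_Liouville_linaerized} and Corollary~\ref{cor_exist_uniqu_LineraLiouv}.
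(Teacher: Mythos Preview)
Your proposal is correct and follows essentially the same route as the paper: pull back to $B_R$ via $Q_\eta$, apply the Implicit Function Theorem with a Lyapunov--Schmidt parameter $z$ along the (at most one-dimensional, radial) kernel of the linearization, then differentiate the resulting identity to read off \eqref{partialLambda} and \eqref{partialPpppp}. The paper's implementation differs only in details: it treats separately the cases of trivial versus nontrivial kernel of \eqref{LiNeArIzEd_bif} (setting $z=\Lambda-\Lambda_0$ in the former, $\tilde S=\Phi_0+zw+\phi$ with $\phi\perp w$ in the latter), establishes analyticity by complexifying $z$ and showing $\partial_{\bar z}$ of the solution vanishes rather than invoking the analytic IFT, and --- precisely at the bookkeeping point you flag --- computes the $\eta$-derivative not via a boundary expansion but by introducing the auxiliary function $H_3=\chi(r)\rho(\varphi)\partial_r\Phi+\phi_2$ built from the cutoff in \eqref{standard_diffeomorfisme}, so that $H_2-H_3$ satisfies a homogeneous problem with only the trivial solution; note that after pull-back one has $\tilde S=0$ on $\partial B_R$ exactly, so the boundary value $-\Phi_0'(R)\rho$ for $\phi_2$ arises from this splitting rather than from a nonzero Dirichlet datum for $\partial_\eta\tilde S$.
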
	 

\begin{proof}
	Using the diffeomorphism $Q_\eta$, equation \eqref{liouvillewithv} in terms of $\tilde S=S\circ Q_\eta^{-1}$ (recall that  $Q_\eta$  is defined by \eqref{standard_diffeomorfisme}) reads
\begin{align}
	F(\Lambda,\tilde S,V,\rho,z):=-\Delta\tilde S   +\tilde S-&
	\Lambda e^{\tilde S-V\tilde r\cos\varphi}+\left((\chi^\prime \eta)^2-2\chi^\prime \eta+(\chi \eta^\prime)^2/\tilde{r}^2
	\right)\tilde S_{r r}\notag \\
	+& 
	\left(1/r-1/\tilde{r}+\chi^\prime \eta/\tilde{r}+\chi^{\prime\prime} \eta
	+\chi \eta^{\prime\prime}/\tilde{r}^2
	\right)\tilde S_{r} \notag \\
	 &+ \chi \eta^\prime \tilde S_{r \varphi}/\tilde{r}^2+ \tilde S_{\varphi\varphi }(1/r^2-1/\tilde{r}^2)=0, \quad\quad 0\leq r< R, \label{uzhas_kakoi}
\end{align}
where $\tilde{r}=|Q^{-1}_\eta(r\cos\varphi,r\sin\varphi)|$. The operator 
$$
F: \,\mathbb{R}\times  C^{2,\gamma}(B_R)\cap C_0(\overline{B}_R) \times \mathbb{R} \times C^{2,\gamma}(\mathbb{S}^1) \times \mathbb{R}\ni (\Lambda,\, \tilde S,\, V,\, \eta, z) \, \mapsto\, F(\Lambda,\tilde S,V,\eta,z)\in C^{0,\gamma}(B_R),
$$ 
is continuously Fr\'echet differentiable with respect to $\tilde S$ in some neighborhood of $(\Lambda_0,\Phi_0,0,0,0)$, and the derivative $\partial_{\tilde S} F$ at the given steady state takes the form
$$
\langle\partial_{\tilde S} F(\Lambda_0,\Phi_0,0,0),  w \rangle =
-\Delta w  +w	 -
\Lambda_0 e^{\Phi_0}w.
$$
That means, if the  problem 
\begin{equation}
\label{LiNeArIzEd_bif}
-\Delta w  +w -
\Lambda_0 e^{\Phi_0}w=0	\quad \text{in}\ B_R,
\quad
w=0\quad \text{on}\ \partial B_R
\end{equation}
has only the trivial solution $w=0$, then $F_{\tilde S}(\Lambda_0,\Phi_0,0,0):C^{2,\gamma}(B_R)\cap C_0(\overline{B}_R)\to C^{0,\gamma}(B_R)$ is an isomorphism and by the Implicit Function Theorem, equation  \eqref{uzhas_kakoi} can be solved for $\tilde S$ by a continuous mapping
$(V,\rho,z)\mapsto \tilde S(\, \cdot\,,\,\cdot\,,V,\rho,z)$ in a neighborhood of $(\Lambda_0, 0,0)$, where we defined the parameter $z$ by setting $z:=\Lambda-\Lambda_0$ (equivalently providing $\Lambda(z) = \Lambda_0+z$). 

In the case when \eqref{LiNeArIzEd_bif} has a nonzero solution $w$ we know from the proof of Theorem \ref{firstExistTH} that there are no other linear independent solutions and $w$ satisfies the non-degeneracy condition
\begin{equation}
\label{nondegAGAIN} 
\int_{B_R} e^{\Phi_0}\, w dxdy\not= 0. 
\end{equation} 
We seek $\tilde S$ in the form $\tilde S=\Phi_0+zw +\phi$ with a new  unknown  $\phi$ 
orthogonal (in $L^2(B_R)$) to $w$, i.e. 
$$
\phi\in Y=\left\{ \phi\in C^{2,\gamma}(B_R)\cap C_0(\overline{B}_R)\,\,|\, \int_{B_R} \phi w \,dxdy= 0  \right\}.
$$
Then problem \eqref{uzhas_kakoi} rewrites as $G(\Lambda, \phi, V,\eta,z):= F(\Lambda, \Phi_0+zw +\phi , V, \eta,z)=0$.  We consider $z$ as well as $V$ and $\rho$ as parameters, and note that the operator
$$
G: \, \mathbb{R}\times  Y\ni (\Lambda, \phi)\, \mapsto\, 
G(\Lambda, \phi, V, \eta, z)\in C^{0, \gamma}(B_R).
$$ 
has a continuous  Fr\'echet derivative $\partial_{(\Lambda,\phi)}G$ 
and its value at $(\Lambda_0,0,0,0,0)=:p_0$ is given by  
$$
\langle \partial_{(\Lambda,\phi)}G(p_0),  (\zeta,w)\rangle =
-\Delta w +w	 -
\Lambda_0 e^{\Phi_0}w-\zeta e^{\Phi_0}.
$$
We claim that  $\partial_{(\Lambda,\phi)}G(p_0)$ is a one-to-one mapping of 
$\mathbb{R}\times Y$ onto $C^{0,\gamma}(B_R)$. Indeed, given $f\in C^{0,\gamma}(B_R)$, there exists a unique solution $w\in Y$ of the problem 
\begin{equation}
\label{FredHolmmm}
-\Delta w +w	 -
\Lambda_0 e^{\Phi_0}w-\zeta e^{\Phi_0}=f\quad \text{in}\ B_R,\quad
w=0\quad \text{on}\ \partial B_R
\end{equation}
if and only if $\zeta =- \int_{B_R} f w\, dxdy / \int_{B_R} e^{\Phi_0} w\, dxdy$, i.e. for every $f\in C^{0,\gamma}(B_R)$ there is a unique pair $(\zeta,v)\in\mathbb{R}\times Y $ such that \eqref{FredHolmmm} holds. Also, both the operator $\partial_{(\Lambda,\phi)}G(p_0)$ and its inverse $(\partial_{(\Lambda,\phi)}G(p_0))^{-1}$ are continuous:  for $\partial_{(\Lambda,\phi)}G(p_0)$ this fact is obvious while the continuity of $(\partial_{(\Lambda,\phi)}G(p_0))^{-1}$ follows by classical elliptic estimates (see, e.g. \cite{GilTru2001}). Thus we can apply the Implicit Function Theorem to establish existence of 
$\Lambda(z,V.\eta)$ and $\tilde S(\,\cdot\,,\,\cdot\,, z,V,\eta)$.

To prove \eqref{AnalyticMap} we can complexify the construction by allowing 
$z$ take complex values $z\in\mathbb{C}$. Then calculating the derivative $\partial/\partial \overline{z}$ of \eqref{uzhas_kakoi} at $(0,0,z)$ we obtain that $h:=\partial_{\overline z} \tilde S$ solves 
\begin{equation}
\label{LiNeArIzEn_foranalyticity}
-\Delta h  +h -
\Lambda e^{\Phi(r,z)}h=\partial_{\overline z}\Lambda \, e^{\Phi(r,z)}	\quad \text{in}\ B_R,
\quad
h=0\quad \text{on}\ \partial B_R,
\end{equation}
where $\Lambda=\Lambda(0,0,z)$ and $\Phi(r,z)=\tilde S(x,y,0,0,z)$.
Recall that if \eqref{LiNeArIzEd_bif} nas no nontrivial solutions, then $\Lambda=\Lambda_0+z$. Hence $\partial_{\overline z}\Lambda=0$ which in turn implies that $h=0$ for sufficiently small $|z|$. Now assume that there is a nontrivial solution $w$ of \eqref{LiNeArIzEd_bif} satisfying 
\eqref{nondegAGAIN} and assume that either $h\not =0$ or $\zeta:=\partial_{\overline z}\Lambda\not =0$. Then we can normalize the pair $(\zeta,h)$ so that either $\zeta =1$ or  $\zeta=0$ and $\|h\|_{C^{2,\gamma}(B_R)}=1$. In the case $\zeta =1$ the function 
$h$ still satisfies the a priori bound $\|h\|_{C^{2,\gamma}(B_R)}\leq C$ for sufficiently small $|z|$ thanks to the fact that $h\in Y$. This allows one to pass to the limit as $|z|\to 0$ (along a subsequence), to get a nontrivial  pair $(\zeta,h)\in \mathbb{C}\times Y$ satisfying 
\begin{equation*}
-\Delta h  +h -
\Lambda e^{\Phi_0}h=\zeta e^{\Phi_0}	\quad \text{in}\ B_R,
\quad
h=0\quad \text{on}\ \partial B_R.
\end{equation*}	
This contradiction completes the proof of analyticity.

%

To calculate the derivatives  $\partial_V\Lambda$ and $\partial_V P$ at $(0,0,z)$
we  linearize 	\eqref{uzhas_kakoi} in $V$ to find that  $H_1:=\partial_V \tilde S$ satisfies 
\begin{equation}
\label{Def_of_H_1}
-\Delta H_1  +H_1	 -
\Lambda e^{\Phi(r,z)}(H_1-r\cos\varphi)=\partial_V\Lambda \, e^{\Phi(r,z)}, \quad \text{in}\ B_R,
\quad
H_1=0\quad \text{on}\ \partial B_R.
\end{equation}
Subtract the solution $\phi_1$ of \eqref{phi_1_Liouv} to get the following problem for $\partial_V\Lambda$ and $\tilde H_1:=H_1-\phi_1$:
\begin{equation}
\label{Def_of_tildeH_1}
-\Delta \tilde H_1  + \tilde H_1	 -
\Lambda e^{\Phi(r,z)}\tilde H_1=\partial_V\Lambda \, e^{\Phi(r,z)}, \quad \text{in}\ B_R
\quad
H_1=0\quad \text{on}\ \partial B_R.
\end{equation}
Following exactly the same reasoning as for \eqref{LiNeArIzEn_foranalyticity}, problem 
\eqref{Def_of_tildeH_1} has only the zero solution for sufficiently small $|z|$ (note that $\phi_1$ is orthogonal in $L^2(B_R)$ to all radially symmetric functions $w(r)$).

%

Finally we calculate  $\langle \partial_\eta \Lambda,\rho\rangle$ and  $H_2:=\langle \partial_\eta  \tilde S,\eta\rangle$ at $(0,0,z)$.  Linearizing 	\eqref{uzhas_kakoi} in $\eta$ we find that  $H_2$ solves
	\begin{equation}
	\label{uzhas_kakoi_esche}
-\Delta H_2   +H_2	 -
\Lambda e^{\Phi}H_2+ 2\chi^\prime \rho\,\partial^2_{rr}\Phi
+\left(\chi \rho/r^2 +\chi^\prime \rho/r+\chi^{\prime\prime} \rho
+\chi \rho^{\prime\prime}/r^2
\right)\partial_{r}\Phi=\langle \partial_\eta\Lambda,\rho\rangle e^{\Phi}
	\end{equation}
in $B_R$ with the boundary condition $H_2=0$ on $\partial B_R$. Note that the auxiliary 
function 
$$
H_3(r,\varphi):= \chi(r) \rho(\varphi) \partial_r\Phi(r,z)+\phi_2(r,\varphi)
$$ 
satisfies 
\begin{equation}
\label{uzhas_kakoi_esche_i_esche}
-\Delta H_3   +H_3	 -
\Lambda e^{\Phi}H_3+ 2\chi^\prime \rho\,\partial^2_{rr}\Phi
+\left(\chi \rho/r^2 +\chi^\prime \rho/r+\chi^{\prime\prime} \rho
+\chi \rho^{\prime\prime}/r^2
\right)\partial_{r}\Phi=0\quad \text{in}\ B_R,
\end{equation}
therefore subtracting \eqref{uzhas_kakoi_esche_i_esche} from  \eqref{uzhas_kakoi_esche}
we find 
\begin{equation}
\label{sovsem_ne_uzhas}
-\Delta (H_2-H_3)   +(H_2-H_3)	 -
\Lambda e^{\Phi}(H_2-H_3)=\langle \partial_\eta\Lambda,\rho\rangle e^{\Phi}\quad \text{in}\ B_R, \quad H_2-H_3=0\quad \text{on}\ \partial B_R.
\end{equation}
This problem has only trivial solution for sufficiently small $|z|$, i.e. $\langle \partial_\eta\Lambda,\rho\rangle=0$ and  
$\frac{\partial}{\partial \nu} H_2= \rho \partial^2_{rr}\Phi(R,z)+ \frac{\partial}{\partial \nu} \phi_2$.
\end{proof}

\section{Bifurcation of traveling waves}
\label{sec:bifurcation_via_degree}
	

In this section we will show that at the potential bifurcation point found in Section \ref{sec:bifurcation_cond}, a bifurcation to traveling waves does take place. 

Let $(\Lambda_0, \Phi_0)\in \mathcal{A}_1$ be as in Corollary \ref{OchenHorCorrolaryBifurk}. 
According to Proposition  \eqref{DefineSolLiouvilNonrad} there is a family 
of solutions $\Lambda=\Lambda(V,\eta,z)$,  $S=S(x,y,V,\eta,z)$  
of  \eqref{liouvillewithv}-\eqref{paraconditions1} in the domains $\Omega=\Omega_\eta$ 
(given by \eqref{eq_domain_rho}).  These solutions are guaranteed to exist in a $\ve$-neighborhood ($\ve>0$)
of $(V,\eta,z)=(0,0,0)$  in the parameter space  $\mathbb{R}\times C^{2,\gamma}(\mathbb{S}^1)\times \mathbb{R}$ where they
continuously (actually, smoothly) 
depend on the parameters. 
Thus for given $V\not =0$ the problem 
\eqref{liouvillewithv}-\eqref{paraconditions2} is reduced to finding 
$\rho$ such that $S|_{\eta=\rho}$ satisfies \eqref{paraconditions2} on 
$\partial \Omega=\partial \Omega_\rho$. The parameter 
$z$ now acts  a  bifurcation parameter. 


Next we rewrite the additional 
boundary condition \eqref{paraconditions2} as a fixed point problem for a compact
operator. Calculating the curvature $\kappa$ of $\partial \Omega_\rho$ and the normal vector $\nu$ in polar coordinates we 
have    
\begin{equation}
V\frac{(R+\rho)cos\varphi+\rho^\prime sin\varphi}{\sqrt{(\rho^\prime)^2+(R+\rho)^2}}= 
P-\beta \frac{(R+\rho)^2+2(\rho^\prime)^2-(R+\rho)\rho^{\prime\prime}}
{\left((\rho^\prime)^2+(R+\rho)^2\right)^{3/2}}+\lambda,
\label{KinEtik1}
\end{equation}
where $P=P(\varphi, V,\rho,z)=\frac{\partial S}{\partial \nu}(Q_\rho^{-1}(R,\varphi), V,\rho,z)$
is defined in Proposition \ref{DefineSolLiouvilNonrad}. 
Introducing the notation $H:=\sqrt{(\rho^\prime)^2+(R+\rho)^2}$, rewrite \eqref{KinEtik1} as
\begin{equation*}
\frac{(R+\rho)\rho^{\prime\prime}-(\rho^\prime)^2}
{(\rho^\prime)^2+(R+\rho)^2}=\frac{1}{\beta}\Bigl(V(R+\rho)cos\varphi+V\rho^\prime sin\varphi-H\Bigl(P
+\lambda\Bigr)\Bigr)+1,
\end{equation*}
or
\begin{equation}
\label{KinEtik3}
\left(\arctan \frac{\rho^{\prime}}
{R+\rho}\right)^\prime=\frac{1}{\beta}\Bigl(V(R+\rho)cos\varphi+V\rho^\prime sin\varphi-H\Bigl(P+\lambda\Bigr)\Bigr)+1.
\end{equation}
It follows that
\begin{equation}
\label{lambda_poschitali}
\lambda=\frac{1}{\int_{-\pi}^{\pi}H\, {\rm d}\varphi} \Bigl(\int_{-\pi}^{\pi}\left(V(R+\rho)cos\varphi+V\rho^\prime sin\varphi-H\,P\right) d\varphi+2\pi\beta\Bigr).
\end{equation}

To proceed further we impose three natural  conditions on $\Omega_\rho$. First,
we only  consider domains $\Omega_\rho$ symmetric with respect to $x$-axis (this is suggested by the symmetry of the problem, we assume that the motion occurs in the  direction of $x$-axis),  
that is we require  $\rho$ to be an {\it even} function $\rho$. Second, to avoid  translated 
(in $x$-direction) copies of  the solutions,  we fix the center of mass of $\Omega_\rho$ at 
 the origin:
\begin{equation}
\int_{\Omega_\rho} x\,dxdy=0, \quad\text{or in polar coordinates}\quad \frac{1}{3}\int_{-\pi}^{\pi}(R+\rho)^3\cos\varphi\, d\varphi=0.  
\label{centerofmass}
\end{equation}
Third, we impose the linearized counterpart of the area preservation condition \eqref{TotalVolPrezerv},
\begin{equation}
\int_{-\pi}^{\pi}\rho(\varphi)\, d\varphi=0. 
\label{linearizedareaprezervation}
\end{equation} 

From \eqref{KinEtik3}, taking into account the fact that $\rho^\prime(0)=0$ ($\rho$ is even) and \eqref{linearizedareaprezervation}, we get 
\begin{equation}
\label{FixedPointTW}
\rho = K(\rho, V;z)-\frac{1}{2\pi}\int_{-\pi}^{\pi} K(\rho, V;z)\,  d\varphi,
\end{equation}
where 
\begin{equation*}
K(\rho, V;z):= \int_0^\varphi (R+\rho) \tan\left(\psi_1 +
\frac{1}{\beta}\int_0^{\psi_1}\Bigl(V(R+\rho)cos\psi_2+V\rho^\prime sin\psi_2 -H\Bigl(P+\lambda\Bigr) \Bigr)\, d \psi_2\right)\, d\psi_1
\end{equation*}
with $\lambda$ given by \eqref{lambda_poschitali}.
Thus the traveling waves problem 
\eqref{liouvillewithv}-\eqref{paraconditions2} is reduced to the fixed point problem 
\eqref{FixedPointTW} in the space 
\begin{equation}
\label{ClassForFixedPointTW}
\rho \in\mathcal{H}=\left\{\rho\in C^{2,\gamma}(\mathbb{S}^{1})\,|\, \text{$\rho$ is even and satisfies \eqref{linearizedareaprezervation}}\right\}.
\end{equation}
The following Lemma shows that the operator in  the  right hand side of \eqref{FixedPointTW} maps $\mathcal{H}$ into itself.  
\begin{lem} We have 
\begin{equation}
\label{Oper_for_TW}
\left(K(\rho, V; z )-\frac{1}{2\pi}\int_{-\pi}^{\pi} K(\rho, V;z)\, d\varphi\right)
\in \mathcal{H}\quad \text{whenever}\  \rho \in \mathcal{H}.
\end{equation}
\end{lem}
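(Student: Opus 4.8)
The plan is to check, one by one, the properties that define $\mathcal H$ — the $C^{2,\gamma}$ regularity on $\mathbb S^1$, evenness, and the mean-zero constraint \eqref{linearizedareaprezervation} — bearing in mind that the operator is defined only for $\rho$ in a small neighborhood of $0$ in $\mathcal H$ and for $|V|$, $|z|$ small, as required for $P=P(\varphi,V,\rho,z)$ to be supplied by Proposition \ref{DefineSolLiouvilNonrad}. The mean-zero property holds trivially by the very form of the operator, so the work is in the remaining two items.

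I would begin by locating the base point: at $(\rho,V,z)=0$ one has $H\equiv R$, $P\equiv\Phi_0'(R)$, hence $\lambda=-\Phi_0'(R)+\beta/R$ from \eqref{lambda_poschitali}, so the integrand inside $K$ reduces to $-\beta$ and the argument of the tangent, $a(\varphi):=\varphi+\frac1\beta\int_0^\varphi\bigl(V(R+\rho)\cos\psi_2+V\rho'\sin\psi_2-H(P+\lambda)\bigr)\,d\psi_2$, vanishes identically. By continuity of $P$, $H$, $\lambda$ in the parameters, for all sufficiently small $(\rho,V,z)$ the function $a$ takes values in a fixed compact subinterval of $(-\pi/2,\pi/2)$, so $\tan$ is applied in its region of analyticity. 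Regularity is then bookkeeping: $P\in C^{1,\gamma}(\mathbb S^1)$ and $\rho\in C^{2,\gamma}(\mathbb S^1)$ (so $\rho'$ and $H=\sqrt{(\rho')^2+(R+\rho)^2}$ lie in $C^{1,\gamma}$, the square root being harmless since $H\ge R/2$) make the inner integrand $C^{1,\gamma}$, hence its antiderivative and $a$ are $C^{2,\gamma}$, hence $(R+\rho)\tan(a)$ is $C^{2,\gamma}$, and one further integration gives $K(\rho,V;z)\in C^{3,\gamma}$; subtracting a constant keeps the operator's value in $C^{3,\gamma}\subset C^{2,\gamma}$.

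For evenness the key point is that $\rho$ even forces $P$ even. This I would get from the reflection symmetry $\varphi\mapsto-\varphi$ (i.e.\ $y\mapsto-y$) of problem \eqref{liouvillewithv}--\eqref{paraconditions1} on $\Omega_\eta$ with $\eta=\rho$ even and $V$ along the $x$-axis: the operator $F$ of \eqref{uzhas_kakoi} commutes with this reflection, so by the uniqueness in the Implicit Function Theorem construction of Proposition \ref{DefineSolLiouvilNonrad} the solution $\tilde S$ is even in $\varphi$, whence $P=\partial_\nu S|_{\partial B_R}$ is even. With $P$ even, $\rho$ even, $\rho'$ odd and $H$ even, the inner integrand is even in $\psi_2$; therefore its antiderivative from $0$ is odd, $a$ is odd, $\tan(a)$ is odd, $(R+\rho)\tan(a)$ is odd, and finally $K(\rho,V;z)(\varphi)=\int_0^\varphi(\mathrm{odd})\,d\psi_1$ is even. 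Subtracting a constant preserves evenness.

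The part I expect to be least routine is showing that the operator's value actually descends to a function on $\mathbb S^1$, i.e.\ is $2\pi$-periodic together with its first two derivatives. First, $a$ itself is $2\pi$-periodic: its increment over one full turn is $2\pi+\frac1\beta\int_{-\pi}^\pi(\text{inner})\,d\varphi=2\pi-2\pi=0$, the cancellation being precisely the defining property \eqref{lambda_poschitali} of $\lambda$ (obtained by integrating \eqref{KinEtik3} and using periodicity of $\arctan(\rho'/(R+\rho))$). Hence $g:=(R+\rho)\tan(a)=K'$ is a genuine $C^{2,\gamma}$ function on $\mathbb S^1$, so it only remains to see that its antiderivative $K(\varphi)=\int_0^\varphi g$ is again periodic, i.e.\ $\int_{-\pi}^\pi g\,d\varphi=0$. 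This is where evenness is used decisively: $g$ is odd by the previous step, so the integral over the symmetric interval vanishes. Consequently $K\in C^{3,\gamma}(\mathbb S^1)$, and subtracting its mean gives an element of $\mathcal H$. The only genuine subtlety, then, is the interplay in this last paragraph between the choice of $\lambda$ (which makes $a$ periodic) and the symmetry (which makes $\int g=0$); everything else reduces to routine elliptic and Hölder estimates inherited from Proposition \ref{DefineSolLiouvilNonrad}.
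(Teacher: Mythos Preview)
Your proof is correct and, on the key point of evenness, takes exactly the same route as the paper: the symmetry of $S$ (hence of $P$) with respect to the $x$-axis follows from the reflection invariance of \eqref{liouvillewithv}--\eqref{paraconditions1} together with the uniqueness in the Implicit Function Theorem construction of Proposition~\ref{DefineSolLiouvilNonrad} (the paper also mentions Gidas--Ni--Nirenberg as an alternative). The paper's proof in fact consists only of this observation, declaring the remaining properties ``obvious.''

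Your treatment goes further than the paper's in a useful way. You explicitly verify that the operator's value is a genuine function on $\mathbb S^1$, and you correctly isolate the two ingredients this requires: (i) the choice \eqref{lambda_poschitali} of $\lambda$ is exactly what makes the argument $a$ of the tangent $2\pi$-periodic, and (ii) evenness of $\rho$ (hence oddness of $g=(R+\rho)\tan(a)$) is what makes the final antiderivative $K$ periodic. The paper suppresses this entirely; your observation that periodicity of $K$ does not follow from periodicity of $K'$ alone, and that the symmetry is used a second time here, is a genuine clarification.
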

\begin{proof} 
	The only non-obvious fact is that the operator in the right hand side of  \eqref{Oper_for_TW} maps even function to even ones. This fact follows from the symmetry  of solutions of  \eqref{liouvillewithv}-\eqref{paraconditions1} with respect to $x$-axis in domains $\Omega=\Omega_\rho$ with the same symmetry. 
	The latter property  is the consequence of the uniqueness 
of solutions $\Lambda$ and $S$ constructed in Proposition \ref{DefineSolLiouvilNonrad}, 
it also follows from general results \cite{GidNiNir1979} on symmetry of solutions of semilinear PDEs.
\end{proof}

We  also consider the velocity $V$ as unknown, supplementing \eqref{FixedPointTW} with 
the equation 
\begin{equation}
V=V+\frac{1}{3}\int_{-\pi}^{\pi}(R+\rho)^3\cos\varphi\, d\varphi,  
\label{volpreseration}
\end{equation}
which is obtained by adding \eqref{centerofmass} to the tautological equality $V=V$. Then 
we get the fixed point problem
\begin{equation}
\label{finalfixedP_forTW}
(\rho, V)=(\overline {K}_\rho(\rho, V;z),\overline{K}_V(\rho,V;z)) \quad \text{in}\ \mathcal{H} \times \mathbb{R},
\end{equation}
 where
$$
 \overline {K}_\rho(\rho, V;z)=K(\rho, V; z )-\frac{1}{2\pi}\int_{-\pi}^{\pi} K(\rho, V;z)\, d\varphi,\quad
 \overline{K}_V(\rho,V;z)=V+\frac{1}{3}\int_{-\pi}^{\pi}(R+\rho)^3\cos\varphi\, d\varphi.
$$
Note that $\overline K$ is a compact operator of the class $C^1$. This allows 
one to employ the Leray-Schauder degree theory to show existence 
of nontrivial solutions of \eqref{finalfixedP_forTW} bifurcating from the trivial
solution branch (represented by the curve of radially symmetric steady states). Specifically, traveling wave solutions are obtained as a new branch appearing at the bifurcation point corresponding to the parameter value $z=0$ where the local Leray-Schauder index 
jumps.

 
Recall that the local 
Leray-Schauder index of $I-\overline K(\,\cdot\,;z)$ (where $I$ denotes the identity operator) at zero is defined by means of the 
linearized operator $\overline{L}(\,\cdot\,)$ of $\overline K(\,\cdot\,;z)$ by
$$
{\rm ind}_{LS}[I-\overline K(\,\cdot\,;z), 0]=(-1)^{N(z)},
$$
where $N(z)$ is the number of eigenvalues of  $\overline{L}(\,\cdot\,;z)$  contained in $(1,+\infty)$, counted with (algebraic)
multiplicities. 
The linearized operator $\overline{L}(\,\cdot\,;z)=({L_\rho}(\,\cdot\,;z),L_V(\,\cdot\,;z))$ is given by 
\begin{equation}
L_\rho(\rho, V;z)=\frac{R^2}{\beta}\int_0^\varphi \int_0^{\psi_1}
\Bigl(
V\cos\psi_2 - V \partial_V P(\psi_2,0,0,z)
-\langle \partial_{\eta}P(\psi_2,0,0,z),\rho\rangle-\frac{\beta \rho}{R^2}
\Bigr) 
\,{\rm d} \psi_2\,{\rm d}\psi_1 -C,
\label{LinearizOper_rho}
\end{equation}
\begin{equation}
L_V(\rho, V;z)=V+R^2\int_{-\pi}^{\pi} \rho \cos\varphi \, d\varphi,
\label{LinearizOper_V}
\end{equation}
where $C$ is 
the mean value of the first term 
in  \eqref{LinearizOper_rho}.

\begin{lem}
	\label{eigen_val_of_linearizedLiouv_with_ev} 
The eigenvalues of the linearized operator $\overline{L}(\,\cdot\,;z)$ 
are the pairs of eigenvalues  $E=E_{0,1}(z)$ solving the equation
\begin{equation}
\label{KvadratnoeUravnenieBifurk}
\frac{\pi}{R \Phi^\prime (R;z)}\int_0^R  \Phi^\prime(r;z) \, r^2dr-\pi=\frac{\beta (E-1)^2}{R^4} 
\end{equation}
and those given by 
\begin{equation}
\label{bifurkat_nontr_SW}
E_l(z)=\frac{1}{l^2}+\frac{R^2 h_l^\prime(R;z)}{\beta l^2}+ \frac{R^2\Phi^{\prime\prime}(R;z)}{\beta l^2},\quad l=2,3,\dots
\end{equation}
via solutions $h_l(r;z)$ of the problem \eqref{EscheKuchaSobstvZn}.
\end{lem}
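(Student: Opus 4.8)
The plan is to diagonalize the linearized operator $\overline{L}(\,\cdot\,;z)=(L_\rho,L_V)$ by expanding $\rho$ in a Fourier cosine series $\rho(\varphi)=\sum_{l\geq 1}\rho_l\cos l\varphi$ (the space $\mathcal H$ forces even functions with $\int\rho=0$, so $l\geq 1$) and treating the scalar $V$ separately. First I would note that $\overline{L}$ decouples mode by mode in $l$, with the single exception that the first Fourier mode $\cos\varphi$ of $\rho$ couples to the scalar $V$ through the term $V\cos\psi_2$ inside \eqref{LinearizOper_rho} and through \eqref{LinearizOper_V}; all higher modes $l\geq 2$ give a pure $2\times\infty$ diagonal. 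For $l\geq2$ I would plug $\rho=\cos l\varphi$ into \eqref{LinearizOper_rho}, use Proposition \ref{DefineSolLiouvilNonrad} to evaluate $\langle\partial_\eta P(\psi_2,0,0,z),\cos l\varphi\rangle=\partial^2_{rr}\Phi(R;z)\cos l\psi_2+\partial_\nu\phi_2$, where $\phi_2$ is the $l$-th Fourier mode solution of \eqref{phi_2_Liouv}, i.e. $\phi_2=h_l(r;z)\cos l\varphi$ with $h_l$ solving the relevant radial ODE \eqref{EscheKuchaSobstvZn}. The double integral $\int_0^\varphi\int_0^{\psi_1}\cos l\psi_2\,d\psi_2\,d\psi_1$ contributes $-\frac{1}{l^2}\cos l\varphi$ (modulo the subtracted mean, which kills the constant), and collecting the coefficients of $\cos l\varphi$ gives exactly the eigenvalue formula \eqref{bifurkat_nontr_SW}: the $\frac{1}{l^2}$ from the $-\beta\rho/R^2$ term against the $\frac{R^2}{\beta}$ prefactor, the $\frac{R^2h_l'(R;z)}{\beta l^2}$ from $\partial_\nu\phi_2$, and the $\frac{R^2\Phi''(R;z)}{\beta l^2}$ from the explicit curvature term.

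For the coupled first mode, I would restrict $\overline{L}$ to the two-dimensional invariant subspace spanned by $(\cos\varphi,0)$ and $(0,1)$ in $\mathcal H\times\mathbb R$. Writing the restriction as a $2\times 2$ matrix $M(z)$, its $(1,1)$ and $(1,2)$ entries come from the $\langle\partial_\eta P,\cos\varphi\rangle$ and $V\cos\psi_2-V\partial_V P$ contributions in \eqref{LinearizOper_rho} together with the double integration (note $\partial_V P=\partial_\nu\phi_1$, with $\phi_1$ the $\cos\varphi$-mode solution of \eqref{phi_1_Liouv}), while the second row is read off directly from \eqref{LinearizOper_V} as $(R^2\pi,1)$ after computing $\int_{-\pi}^\pi\cos^2\varphi\,d\varphi=\pi$. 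The eigenvalues $E$ of $M(z)$ are the roots of its characteristic polynomial; using the radial identities for $\phi_1$ (the same integration-by-parts computation that produced the bifurcation condition \eqref{bifurccondition1}, namely $R\phi_1'(R)$-type quantities expressed through $\Lambda\int_0^R e^\Phi\Phi' r^2\,dr$) one identifies $\det(M(z)-E I)=0$ with \eqref{KvadratnoeUravnenieBifurk}, i.e. a quadratic in $E$ whose two roots are $E_{0,1}(z)$. The key algebraic manipulation is to recognize that $\frac{\pi}{R\Phi'(R;z)}\int_0^R\Phi'(r;z)r^2\,dr-\pi$ is precisely the off-diagonal product contribution (the "discriminant-shifting" term) that appears when one completes the square, giving the stated form $\beta(E-1)^2/R^4$ on the right.

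The main obstacle I anticipate is the bookkeeping in the first-mode coupling: correctly tracking the constant $C$ (the subtracted mean) so that it does not contaminate the eigenvalue equation, and carefully justifying that $\partial_V P$ and $\partial_\eta P$ evaluated on the first Fourier mode reduce to the boundary-normal derivatives of $\phi_1$ and $\phi_2$ exactly as in Proposition \ref{DefineSolLiouvilNonrad} — in particular that the orthogonality to radial functions built into Corollary \ref{cor_exist_uniqu_LineraLiouv} is the right normalization here. A secondary technical point is verifying that $\overline L(\,\cdot\,;z)$ is compact and self-adjointizable enough that "eigenvalue counted with algebraic multiplicity" in the definition of ${\rm ind}_{LS}$ is unambiguous; but since $\overline L$ is, after the Fourier reduction, a direct sum of finite-rank pieces plus the compact resolvents of the radial ODEs, this is routine. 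Once \eqref{KvadratnoeUravnenieBifurk} and \eqref{bifurkat_nontr_SW} are established, the connection to the bifurcation condition \eqref{bifurccondition1} is transparent: $E=1$ is a root of \eqref{KvadratnoeUravnenieBifurk} exactly when the left-hand side vanishes, which by \eqref{bifurccondition1} rewritten is precisely the condition identified in Corollary \ref{OchenHorCorrolaryBifurk}, so that an eigenvalue of $\overline L$ crosses the value $1$ as $z$ passes through $0$, forcing the Leray-Schauder index to jump.
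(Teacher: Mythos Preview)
Your proposal is correct and follows essentially the same route as the paper. The only cosmetic difference is that where you evaluate the double integral $\int_0^\varphi\int_0^{\psi_1}\cos l\psi_2\,d\psi_2\,d\psi_1$ directly and then match Fourier coefficients, the paper instead differentiates the eigenvalue equation $L_\rho(\rho,V;z)=E\rho$ twice in $\varphi$ (which kills both the double integral and the constant $C$ in one stroke) and then projects onto $\cos\varphi$ or $\cos l\varphi$; and where you organize the first-mode computation as the characteristic polynomial of a $2\times2$ matrix on ${\rm span}\{(\cos\varphi,0),(0,1)\}$, the paper normalizes $V=1$, reads off $\int_{-\pi}^\pi\rho\cos\varphi\,d\varphi=(E-1)/R^2$ from \eqref{LinearizOper_V}, and substitutes. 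Both arrive at \eqref{KvadratnoeUravnenieBifurk} via the same integration by parts against $\Phi'(r)\cos\varphi$ (exploiting that $\Phi'(r)\cos\varphi$ solves the homogeneous linearized equation), and your observation that $\langle\partial_\eta P,\cos\varphi\rangle$ contributes nothing to the $\cos\varphi$ projection is exactly what makes the paper's combined treatment of $\phi_1$ and $\phi_2$ collapse to the $\phi_1$ term alone.
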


\begin{proof} Consider an  eigenvalue $E$ corresponding to a eigenvector $(V,\rho)$ with 
	$V=1$. 
Then we have 
\begin{equation}
\label{FourirCoeffUfirstEF}
	\int_{-\pi}^{\pi} \rho \cos \varphi \, {\rm d}\varphi= (E-1)/R^2,
\end{equation}
Differentiate the equation $L_\rho(\rho,1;z)=E\rho$  twice with respect to  $\varphi$:
$$
 \cos\varphi -  \partial_V P(\varphi,0,0,z)
 -\langle \partial_{\eta}P(\varphi,0,0,z),\rho\rangle-\frac{\beta \rho}{R^2}=\frac{\beta E}{R^2}\rho^{\prime\prime}.
$$
Multiply this equation by 
	$\cos \varphi$ and integrate from $-\pi$ to $\pi$ to get
	$$
	\pi -\int_{-\pi}^{\pi} \left(\partial_V P(\varphi,0,0,z)
	+\langle \partial_{\eta}P(\varphi,0,0,z),\rho\rangle\right)\cos \varphi d\varphi=-\frac{\beta (E-1)^2}{R^4} 
	$$
Note that $\partial_V P(\varphi,0,0,z)$ and $\langle \partial_{\eta}P(\varphi,0,0,z),\rho\rangle$ are identified in  Proposition \eqref{DefineSolLiouvilNonrad} by means of problems \eqref{phi_1_Liouv} and \eqref{phi_2_Liouv}. We can calculate the integral on the left hand side multiplying 
\eqref{phi_1_Liouv} and \eqref{phi_2_Liouv} by $\Phi^\prime(r) r\cos \varphi$, and integrating over 
$B_R$:
$$
 \int_{-\pi}^{\pi} \left(\partial_V P(\varphi,0,0,z)
 +\langle \partial_{\eta}P(\varphi,0,0,z),\rho\rangle\right)\cos \varphi d\varphi =\frac{\pi}{R \Phi^\prime (R;z)}\int_0^R  \Phi^\prime(r;z) \, r^2dr.
$$ 
Thus solutions of  \eqref{KvadratnoeUravnenieBifurk} are eigenvalues corresponding to 
eigenvectors $(1,\rho_{0,1})$ with  $\rho_{0,1} =(E_{0,1}-1)\cos\varphi /(\pi R^2)$ (cf. \eqref{FourirCoeffUfirstEF}) if $E_{0,1}\not =1$. In the special case $E_{0,1}=1$, there is the only eigenvector $(1,0)$ and the adjoint vector  $(0,\cos\varphi/(\pi R^2))$.


Other eigenvectors are $(0,\rho)$ with $\rho = \cos l\varphi$, $l=2,3,\dots$. 
To calculate the corresponding  eigenvalues 
we seek solutions  
of  problem \eqref{phi_2_Liouv} in the form $h_l(r)\cos l\varphi$,  which results in   
\begin{equation}
\label{EscheKuchaSobstvZn}
-\frac{1}{r}(r h^\prime_l(r))^\prime +\left(\frac{l^2}{r^2}+1\right)h_l(r)=\Lambda(z) e^{\Phi(r;z)}h_l(r)\quad
0<r<R, \quad h_l(0)=0,\ h_l(R)=-\Phi^\prime(R;z).
\end{equation}	
Then  we identify $\langle \partial_{\eta}P(\varphi,0,0,z),\rho\rangle=h_l^\prime(r)\cos l\varphi$
with the help of Proposition \ref{DefineSolLiouvilNonrad}. 
Plugging these relations into the equations $L_\rho(\rho,0;z)=E\rho$ leads to the formula 
\eqref{bifurkat_nontr_SW} for the eigenvalues $E=E_l$.
%
%
\end{proof}

Assume now that none of eigenvalues \eqref{bifurkat_nontr_SW}  is $1$ for $z=0$, $E_l\not =1$,
$l=2,3,\dots$,
i.e.
\begin{equation}
\label{exceptional_va_Liouv}
\beta\not = \beta_l,\quad \beta_l= \frac{R^2}{l^2-1}(h_l^\prime(R;0)+\Phi_0^{\prime\prime}(R)), 
\ l=2,3,\dots .
\end{equation}
It is not hard to show that the exceptional values $\beta_l$ form a sequence converging to zero. Moreover, the following result holds.

\begin{lem} 
	\label{lem:exceptional_beta}
	Eigenvalues \eqref{bifurkat_nontr_SW} have the following uniform 
	in $-\ve<z<\ve$, $l\geq 2$ and $\beta>0$ bound
	\begin{equation}
	\label{Unif_EV_bound_lin_Liouv_TW}
	E_l\leq {C}\left(\frac{1}{\beta l}+\frac{1}{l^2}\right).
	\end{equation}
\end{lem}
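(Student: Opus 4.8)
The goal is to bound $E_l$ from \eqref{bifurkat_nontr_SW} uniformly in $z$, $l$, $\beta$. Since $E_l=\frac{1}{l^2}+\frac{R^2}{\beta l^2}(h_l'(R;z)+\Phi''(R;z))$, and the term $\frac{1}{l^2}$ is already of the claimed form, everything reduces to the estimate
\begin{equation*}
h_l'(R;z)+\Phi''(R;z)\leq C\,l
\end{equation*}
uniformly in $z\in(-\ve,\ve)$ and $l\geq 2$ (with $C$ allowed to depend on $R$ and on the compact range of the curve $\mathcal A_1$ near $(\Lambda_0,\Phi_0)$). The quantity $\Phi''(R;z)$ is bounded uniformly in $z$ by elliptic regularity along the analytic curve, so it is harmless; the real content is the linear-in-$l$ upper bound on $h_l'(R;z)$, where $h_l$ solves \eqref{EscheKuchaSobstvZn}, namely $-\frac1r(rh_l')'+(l^2/r^2+1)h_l=\Lambda(z)e^{\Phi(r;z)}h_l$ on $(0,R)$ with $h_l(0)=0$, $h_l(R)=-\Phi'(R;z)=:\gamma(z)>0$.

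First I would set up a comparison/barrier argument. Drop the (bounded, nonnegative) term $\Lambda e^\Phi h_l$ using the sign of $h_l$: by the maximum principle $h_l>0$ on $(0,R)$ (the boundary datum $\gamma(z)$ is positive and uniformly bounded below by Lemma~\ref{lem:properties_steady_states} applied along the curve, and $0$ at the other endpoint), hence $h_l$ is a subsolution of the modified Bessel operator $-\frac1r(rw')'+(l^2/r^2+1)w=0$ with the same boundary data, forcing $h_l\le \gamma(z)\,I_l(r)/I_l(R)$ on $[0,R)$, where $I_l$ is the modified Bessel function of order $l$. Since both $h_l$ and this barrier vanish nowhere-positively-in-a-bad-way and agree at $r=R$ with $h_l$ below, Hopf's lemma at $r=R$ gives $h_l'(R)\le \gamma(z)\,(I_l)'(R)/I_l(R)$. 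Thus the problem is reduced to the elementary asymptotic fact that $I_l'(R)/I_l(R)\le C\,l/R$ for all $l\ge 1$ (indeed $I_l'(R)/I_l(R)=\sqrt{1+l^2/R^2}+O(1/l)$ by the uniform asymptotics of Bessel functions, or more simply from the recurrence $I_l'(x)=I_{l+1}(x)+\frac{l}{x}I_l(x)$ together with $I_{l+1}<I_l$, giving $I_l'(R)/I_l(R)< 1 + l/R$).

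To make the comparison rigorous near $r=0$ one should note that $h_l(r)=O(r^l)$ as $r\to 0$ (from the indicial analysis of \eqref{EscheKuchaSobstvZn}), matching the behavior of $I_l$, so the difference $w:=\gamma(z)I_l(r)/I_l(R)-h_l(r)$ is continuous up to $0$, vanishes at $0$ and $R$, and satisfies $-\frac1r(rw')'+(l^2/r^2+1)w = \Lambda(z)e^{\Phi(r;z)}h_l\ge 0$; the maximum principle then yields $w\ge 0$ on $[0,R]$, and Hopf at $R$ gives $w'(R)\le 0$, i.e.\ $h_l'(R)\ge -\gamma(z)(I_l)'(R)/I_l(R)$... careful with signs: since $h_l(R)=\gamma(z)>0$ and $h_l>0$ inside with $w\ge0$, in fact $h_l'(R)\le \gamma(z)(I_l)'(R)/I_l(R)\le C\gamma(z)l/R$. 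Combining with the uniform bound $0<\gamma(z)\le C$ and the uniform bound on $\Phi''(R;z)$ gives $h_l'(R;z)+\Phi''(R;z)\le C l$, whence $E_l\le \frac{1}{l^2}+\frac{R^2}{\beta l^2}\cdot Cl = \frac{1}{l^2}+\frac{CR^2}{\beta l}$, which is \eqref{Unif_EV_bound_lin_Liouv_TW}.

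**Main obstacle.** The delicate point is not the Bessel asymptotics (standard) but ensuring all constants are genuinely \emph{uniform in $z$}: one needs that $\Lambda(z)e^{\Phi(r;z)}$ stays bounded, that $\gamma(z)=-\Phi'(R;z)$ stays in a fixed interval $[c,C]$ with $c>0$, and that $\Phi''(R;z)$ stays bounded, for all $|z|<\ve$. These all follow from Proposition~\ref{DefineSolLiouvilNonrad} (continuity/analyticity of $z\mapsto(\Lambda(z),\Phi(\cdot;z))$ into a space controlling two derivatives up to the boundary), Lemma~\ref{lem:properties_steady_states} (strict negativity of $\Phi'(R)$), and Hopf's lemma, but they must be invoked carefully; shrinking $\ve$ if necessary is allowed. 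A secondary care point is the $r\to 0$ behavior of $h_l$ for the comparison to be valid at the origin, which is handled by the $O(r^l)$ expansion noted above.
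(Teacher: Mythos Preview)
Your strategy is correct and essentially the same as the paper's: a barrier/comparison argument to show $h_l'(R;z)\le C\,l$ uniformly. However, you have a persistent sign slip in the comparison. Since $-\tfrac1r(rh_l')'+(l^2/r^2+1)h_l=\Lambda e^{\Phi}h_l\ge 0$, the function $h_l$ is a \emph{supersolution} (not a subsolution) of the modified Bessel operator; equivalently, with $w=\gamma(z)I_l(r)/I_l(R)-h_l$ one has $Lw=-\Lambda e^{\Phi}h_l\le 0$, so the maximum principle gives $w\le 0$ (not $w\ge 0$). From $w\le 0$ on $[0,R]$ and $w(R)=0$ you get $w'(R)\ge 0$, which is precisely the desired inequality $h_l'(R)\le \gamma(z)\,I_l'(R)/I_l(R)$. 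So your final conclusion is right, but the two middle displays should be flipped.

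The paper takes a slightly more elementary route for the barrier: instead of $I_l$, it uses the explicit power functions $\tilde h_{l+l_0}(r)=(r/R)^{l+l_0}$, which solve $-\tfrac1r(r\tilde h')'+((l+l_0)^2/r^2)\tilde h=0$. Choosing $l_0$ large enough (depending only on $R$ and a uniform bound on $1-\Lambda(z)e^{\Phi(r;z)}$) makes $h_l$ a supersolution of this Euler-type equation for every $l\ge 2$ and $|z|<\ve$, giving $h_l(r)\ge -\Phi'(R;z)(r/R)^{l+l_0}$ and hence $h_l'(R)\le -\Phi'(R;z)(l+l_0)/R$. This avoids any Bessel asymptotics or recurrences at the price of introducing the index shift $l_0$; your Bessel barrier is sharper but requires the extra step $I_l'(R)/I_l(R)<1+l/R$. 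Either version suffices.
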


\begin{proof}
Consider functions $\tilde h_{l+l_0}=(r/R)^{l+l_0}$, which are solutions of
\begin{equation}
	\label{EscheKuchaSobstvZnaux}
	-\frac{1}{r}(r \tilde h^\prime_{l+l_0}(r))^\prime +\Bigl(\frac{l+l_0}{r}\Bigr)^2\tilde h_{l+l_0}(r)=0\quad
	0<r<R, \quad \tilde h^\prime_{l+l_0}(0)=0,\ \tilde h^\prime_{l+l_0}(R)=1.
	\end{equation}
For sufficiently large $l_0$ functions $h_l(r;z)$, being solutions of \eqref{EscheKuchaSobstvZn}, are all supersolutions of \eqref{EscheKuchaSobstvZnaux}, therefore $h_l(r)\geq -\Phi^\prime(R;z)\tilde h_{l+l_0}(r)$. 
This leads to the uniform bound \eqref{Unif_EV_bound_lin_Liouv_TW}.
\end{proof}

This Lemma implies that under the condition \eqref{exceptional_va_Liouv}  none of the eigenvalues   \eqref{bifurkat_nontr_SW} is equal to $1$ when  $-\ve_0\leq z\leq \ve_0$, for some $0<\ve_0< \ve$. 
On the other hand by Lemma \eqref{est_li_zhizn_na_Marse} in any neighborhood of $z=0$ there are $z$ such 
that $E_{0,1}(z)$ have nonzero imaginary part and there are $z$ such that both $E_{0,1}(z)$
are real and the smallest one, say $E_0(z)$, satisfies $E_0(z)<1$ while $E_1(z)>1$. This shows the jump of the local Leray-Schauder index through $z=0$ and yields the following theorem which is the main result of this work.

\begin{thm} 
	\label{bifurc_tw_thm_myaso}
	Let $(\Lambda_0, \Phi_0)\in \mathcal{A}_1$ be as in Corollary \ref{OchenHorCorrolaryBifurk}. Assume also that the parameter $\beta$  from  \eqref{tw_boundary} satisfies  
	 the inequality $\beta \neq \beta_l$ 
	where $\beta_l$ are defined in \eqref{exceptional_va_Liouv}  and $h_l(r)$ are solutions of \eqref{EscheKuchaSobstvZn} with $\Lambda=\Lambda_0$, $\Phi=\Phi_0$. Then there exists a family of solutions of 
\eqref{finalfixedP_forTW}(traveling waves) with $V\not=0$ bifurcating from trivial solutions at $z=0$.
\end{thm}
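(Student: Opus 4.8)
The plan is to apply the Leray--Schauder degree theory to the compact $C^1$ map $\overline K(\,\cdot\,;z)=(\overline K_\rho,\overline K_V)$ acting on $\mathcal H\times\mathbb R$, using $z$ as the bifurcation parameter. The trivial branch is $(\rho,V)=(0,0)$, which corresponds to the family of radially symmetric steady states parametrized by $z$ (via Proposition \ref{DefineSolLiouvilNonrad}). By the standard Leray--Schauder bifurcation criterion, it suffices to show that the local index ${\rm ind}_{LS}[I-\overline K(\,\cdot\,;z),0]=(-1)^{N(z)}$ changes as $z$ crosses $0$; then a connected branch of nontrivial fixed points must emanate from $(0,0,0)$, and since the Fourier-mode analysis forces these solutions to have $V\neq 0$ near $z=0$ (the $V=0$ slice gives back the radial steady states and the non-radial steady states, which by \eqref{bifurkat_nontr_SW} are excluded under the hypothesis $\beta\neq\beta_l$), this branch consists of genuine traveling waves.

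First I would record that, by Lemma \ref{eigen_val_of_linearizedLiouv_with_ev}, the spectrum of the linearized operator $\overline L(\,\cdot\,;z)$ splits into the ``odd-mode'' eigenvalues $E_l(z)$, $l\geq 2$, given by \eqref{bifurkat_nontr_SW}, and the pair $E_{0,1}(z)$ solving the quadratic \eqref{KvadratnoeUravnenieBifurk}. Next I would invoke Lemma \ref{lem:exceptional_beta}: the uniform bound \eqref{Unif_EV_bound_lin_Liouv_TW} guarantees that for all sufficiently large $l$ we have $E_l(z)<1$ uniformly in $z\in(-\ve,\ve)$, so only finitely many of the $E_l(z)$ can lie in $(1,+\infty)$; combined with the hypothesis $\beta\neq\beta_l$ for all $l$, none of the $E_l(z)$ equals $1$ for $|z|\leq\ve_0$, and by continuity the count of those $E_l(z)$ in $(1,+\infty)$ is constant on $[-\ve_0,\ve_0]$. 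Hence the only possible source of an index change is the pair $E_{0,1}(z)$.

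Then I would analyze \eqref{KvadratnoeUravnenieBifurk}. Writing its left-hand side as $A(z):=\frac{\pi}{R\Phi'(R;z)}\int_0^R\Phi'(r;z)r^2\,dr-\pi$, the two roots satisfy $E_{0,1}(z)=1\pm R^2\sqrt{\beta^{-1}A(z)R^{-2}}$, i.e. $(E-1)^2=\beta R^{-4}\cdot$(something)$^{-1}$ --- more precisely $\beta(E-1)^2/R^4=A(z)$. Corollary \ref{OchenHorCorrolaryBifurk} provides $(\Lambda_0,\Phi_0)$ at which the necessary bifurcation condition \eqref{bifurccondition1} holds, equivalently $R\Phi_0'(R)=\Lambda_0\int_0^R e^{\Phi_0}\Phi_0'r^2\,dr$; one checks that \eqref{bifurccondition1} is exactly the statement $A(0)=0$ after using \eqref{radialLiouville} to rewrite $\Lambda_0 e^{\Phi_0}\Phi_0'$ in terms of $\Phi_0'$. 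Moreover Corollary \ref{OchenHorCorrolaryBifurk} gives, in every neighborhood of $z=0$, parameters $z_-$ with $R\Phi_-'(R)<\Lambda_-\int_0^R e^{\Phi_-}\Phi_-'r^2\,dr$ and $z_+$ with the reverse inequality --- that is, $A(z_-)$ and $A(z_+)$ have opposite signs. When $A(z)>0$ both $E_{0,1}(z)$ are real and distinct, one of them exceeding $1$ and the other less than $1$ (so exactly one lies in $(1,+\infty)$); when $A(z)<0$ the roots are complex conjugates with real part $1$, hence neither lies in $(1,+\infty)$ (a non-real eigenvalue pair contributes an even multiplicity and does not affect $(-1)^{N}$, or one argues directly that $N$ counts real eigenvalues). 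Therefore $N(z_-)$ and $N(z_+)$ differ by $1$ (mod $2$), so ${\rm ind}_{LS}[I-\overline K(\,\cdot\,;z_-),0]=-{\rm ind}_{LS}[I-\overline K(\,\cdot\,;z_+),0]$.

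The index jump across $z=0$ now yields, by the classical Leray--Schauder / Krasnoselskii bifurcation theorem (applicable since $\overline K$ is compact and $C^1$), a connected set of nontrivial solutions of \eqref{finalfixedP_forTW} bifurcating from $(0,0)$ at $z=0$. To conclude that this set consists of traveling waves with $V\neq 0$ rather than spurious $V=0$ solutions, I would note that if $V=0$ then $\rho$ solves the pure steady-state perturbation problem whose linearization has eigenvalues $E_l(z)$, $l\geq 2$, none equal to $1$ under the hypothesis $\beta\neq\beta_l$, so by the implicit function theorem the only nearby $V=0$ solution is $\rho=0$; hence the bifurcating branch has $V\neq 0$ in a punctured neighborhood of $z=0$. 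The main obstacle is the careful bookkeeping in the index computation: verifying that the non-real eigenvalue pair $E_{0,1}(z)$ for $A(z)<0$ genuinely does not contribute to $N(z)$ (one must treat the real linear operator $\overline L$ and its complexification, and check that complex conjugate eigenvalues carry equal algebraic multiplicities so their total contribution to $(-1)^{N}$ is $+1$), and confirming that at $z=0$ itself (where $E_{0,1}=1$ is a generalized eigenvalue with a nontrivial Jordan block, as recorded in Lemma \ref{eigen_val_of_linearizedLiouv_with_ev}) the index is nonetheless well-defined on either side --- i.e. that $1$ is not an eigenvalue of $\overline L(\,\cdot\,;z)$ for $z$ near but not equal to $0$, which is exactly the robustness statement in Corollary \ref{OchenHorCorrolaryBifurk}.
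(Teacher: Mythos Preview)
Your approach coincides with the paper's: show that the local Leray--Schauder index of $I-\overline K(\,\cdot\,;z)$ at the trivial solution jumps as $z$ passes through $0$, by verifying that the eigenvalues $E_l(z)$, $l\ge 2$, stay away from $1$ (Lemma \ref{lem:exceptional_beta} plus the hypothesis $\beta\neq\beta_l$) while the pair $E_{0,1}(z)$ crosses $1$ exactly as dictated by Corollary \ref{OchenHorCorrolaryBifurk}. The index computation and the treatment of the complex pair $E_{0,1}$ are fine.

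There is one genuine gap in your final step, where you exclude $V=0$ by applying the implicit function theorem to the ``pure steady--state perturbation problem''. You assert that its linearization has only the eigenvalues $E_l(z)$, $l\ge 2$. That is not quite right: if you set $V=0$ in $L_\rho(\,\cdot\,,V;z)$ and act on $\cos\varphi\in\mathcal H$, one finds (using that $\phi_2=-\Phi'(r)\cos\varphi$ solves \eqref{phi_2_Liouv} with $\rho=\cos\varphi$, so that $\langle\partial_\eta P,\cos\varphi\rangle=0$) that $L_\rho(\cos\varphi,0;z)=\cos\varphi$. Thus the $l=1$ mode always carries eigenvalue $1$---this is the infinitesimal translation of the disk---and the implicit function theorem does \emph{not} apply to $\rho\mapsto\rho-\overline K_\rho(\rho,0;z)$ on $\mathcal H$. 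What removes this mode is the second equation of \eqref{finalfixedP_forTW}, i.e.\ the center--of--mass constraint \eqref{centerofmass}, which at linear order reads $\int_{-\pi}^{\pi}\rho\cos\varphi\,d\varphi=0$. Your argument can be repaired by working on the codimension--one subspace of $\mathcal H$ cut out by this constraint, but this has to be said explicitly.

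The paper bypasses this issue altogether: instead of invoking the implicit function theorem on a $V=0$ slice, it argues directly that the nontrivial fixed points $(\rho_\delta,V_\delta)\in\partial\overline U_\delta$ must satisfy $|V_\delta|=\delta$ for small $\delta$. Assuming instead $\|\rho_\delta\|_{C^{2,\gamma}}=\delta$ along a subsequence, one normalizes by $\delta$ and passes to the limit to obtain an eigenvector $(V,\rho)$ of $\overline L(\,\cdot\,;z_\ast)$ with eigenvalue $1$ and $\|\rho\|_{C^{2,\gamma}}=1$. By Lemma \ref{eigen_val_of_linearizedLiouv_with_ev} such an eigenvector is either $(0,\cos l\varphi)$ with $E_l(z_\ast)=1$ for some $l\ge 2$ (excluded by the choice of $\ve_0$), or of the form $(1,\rho_{0,1})$ with $E_{0,1}(z_\ast)=1$, in which case $\rho_{0,1}=0$---contradicting $\|\rho\|=1$. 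This limiting argument handles the $\cos\varphi$ mode automatically and is what you should substitute for your IFT step.
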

\begin{rem}
By the construction above in this Section the problem \eqref{volpreseration}-\eqref{finalfixedP_forTW} is equivalent to the original problem \eqref{liouvillewithv}-\eqref{paraconditions2}, thus Theorem \ref{bifurc_tw_thm_myaso} and Lemma \ref{lem:exceptional_beta} yield Theorem \ref{thm:main}.
\end{rem}

\begin{rem}
	
The exceptional values $\beta=\beta_l $, $l=2,3,\dots$ correspond to bifurcations of non-radial steady states, see Section     \ref{sec:nonradial_steady_states}. It is conjectured 
that for exceptional $\beta=\beta_l$ the bifurcation to traveling waves and  to non-radial steady states  occurs simultaneously. Since the set of exceptional values has zero measure, this case is not further investigated here.  
\end{rem}
\begin{proof} We just make the above described arguments more precise and detailed. Let $\ve_0$ be such that  none of the eigenvalues   \eqref{bifurkat_nontr_SW} is equal to $1$ when  $-\ve_0\leq z\leq \ve_0$. By Corollary \ref{OchenHorCorrolaryBifurk} there are
$-\ve_0\leq z_{\pm}\leq \ve_0$ such that the left hand side of  \eqref{KvadratnoeUravnenieBifurk} is negative,
say at $z_{-}$, and it is positive at $z_{+}$. Since the linearized operators $\overline {L}(\,\cdot\,;z_{\pm})$ does not have the eigenvalue $1$ the Leray-Schauder degree
${\rm deg}_{LS}(I-\overline K(\,\cdot\,;z_{\pm}),\overline {U}_\delta, 0)$ is well defined for 
every $\delta$-neighborhood   
$$
\overline {U}_{\delta}=\{(V,\rho)\,|\, |V|<\delta,\,\|\rho\|_{C^{2,\gamma}(\mathbb{S}^1)}<\delta\}
$$ 
of zero in $\mathbb{R}\times C^{2,\gamma}(\mathbb{S}^1)$, $0<\delta<\ve_1$, for some $0<\ve_1<\ve_0/2$. Moreover,
$$
{\rm deg}_{LS}(I-\overline K(\,\cdot\,;z_{\pm}),\overline {U}_\delta, 0)={\rm ind}_{LS}[I-\overline K(\,\cdot\,;z_{\pm}), 0]=(-1)^{N(z_{\pm})},
$$ 
where $N(z_{\pm})$ is the number of eigenvalues of  $\overline{L}(\,\cdot\,;z_{\pm})$  contained in $(1,+\infty)$. Since the number of eigenvalues  \eqref{bifurkat_nontr_SW} contained in $(1,+\infty)$
coincides at $z_{-}$ and $z_{+}$ while for eigenvalues $E_{0,1}$ it differs by one, we conclude that
$$
{\rm deg}_{LS}(I-\overline K(\,\cdot\,;z_{-})
,\overline {U}_{\delta}, 0)\not= {\rm deg}_{LS}(I-\overline K(\,\cdot\,;z_{+}),\overline {U}_{\delta}, 0).
$$
It follows that for some $-\ve_0\leq z_{\ast}(\delta)\leq \ve_0$ the mapping $K(\,\cdot\,;z_{\ast})$ has a fixed
point $(V_\delta,\rho_\delta)$ on $\partial \overline{U}_{\delta}$. It remains to show that among these solutions there are true traveling waves. To this end we prove that $V_\delta=\pm \delta$
for sufficiently small $\delta>0$, arguing by contradiction. Assume that $\|\rho_\delta\|_{C^{2,\gamma}(\mathbb{S}^1)}=\delta$ and $|V_\delta|<\delta$ along a subsequence $\delta=\delta_n\to 0$. Then plug $V=V_\delta$ and $\rho=\rho_\delta$ in 
\eqref{finalfixedP_forTW}:
\begin{equation}
\label{finalfixedP_forTW_linearPart}
( V_\delta,\rho_\delta)=\overline{K}(V_\delta,\rho_\delta;z_{\ast}(\delta))=\overline {L}(V_\delta,\rho_\delta;z_{\ast}(\delta)) +O(\delta^2),
\end{equation}
divide the resulting identity by $\delta$ 
and pass to the limit as $\delta \to 0$. One obtains,  extracting a further subsequence (if necessary), 
$$
V_\delta/\delta\to V,\quad \text{and}\quad  \rho_\delta/\delta \to \rho\quad \text{strongly in} \  C^{2,\gamma}(\mathbb{S}^1), 
$$
and 
$$
(V,\rho)=\overline {L}(V,\rho;z_{\ast}), 
$$
with some $-\ve_0\leq z_{\ast}\leq \ve_0$. Thus $\overline {L}(\, \cdot\, ;z_{\ast})$ has the eigenvalue $1$ and a corresponding eigenvector $(V,\rho)$ with $\|\rho\|_{C^{2,\gamma}(\mathbb{S}^1)}=1$. But this contradicts the proof of 
Lemma \ref{eigen_val_of_linearizedLiouv_with_ev} (recall that $\ve_0$ is chosen so that none of the eigenvalues \eqref{bifurkat_nontr_SW} equals $1$). The Theorem is proved.
\end{proof}
In a particular case when the bifurcation occurs from minimal solutions, which for example, takes place for $R\geq 4$ according to the proof of  Lemma \ref{est_li_zhizn_na_Marse}, case 2, we can  calculate  several terms of the asymptotic expansion of the traveling wave solutions in powers of the velocity $V$. Here we present the first  three terms  in the expansion of 
the function $\rho$ which determines the shape of the 
domain,
\begin{equation}
\rho=-V^2\frac{\tilde
	S_2(R)}{\Phi_0^\prime(R)}\cos 2\varphi-V^3\frac{\tilde
	S_3(R)}{\Phi_0^\prime(R)}\cos 3\varphi+\dots
\end{equation}
where $\tilde S_2$ solves 
\eqref{eq:secondorder_stress_bis}-\eqref{eq:secondorder_stress_bis_bc}, $\tilde S_3$ solves 
\eqref{eq:thirdorder_stress_bis}-\eqref{eq:thirdorder_stress_bis_bc} and $\tilde \phi$ is a solution of \eqref{equatttion81}-\eqref{equatttion81BC} with $\Lambda=\Lambda_0$ 
and $\Phi=\Phi_0$. 
\begin{figure}[h!]
\centering{	\includegraphics[scale=.4]{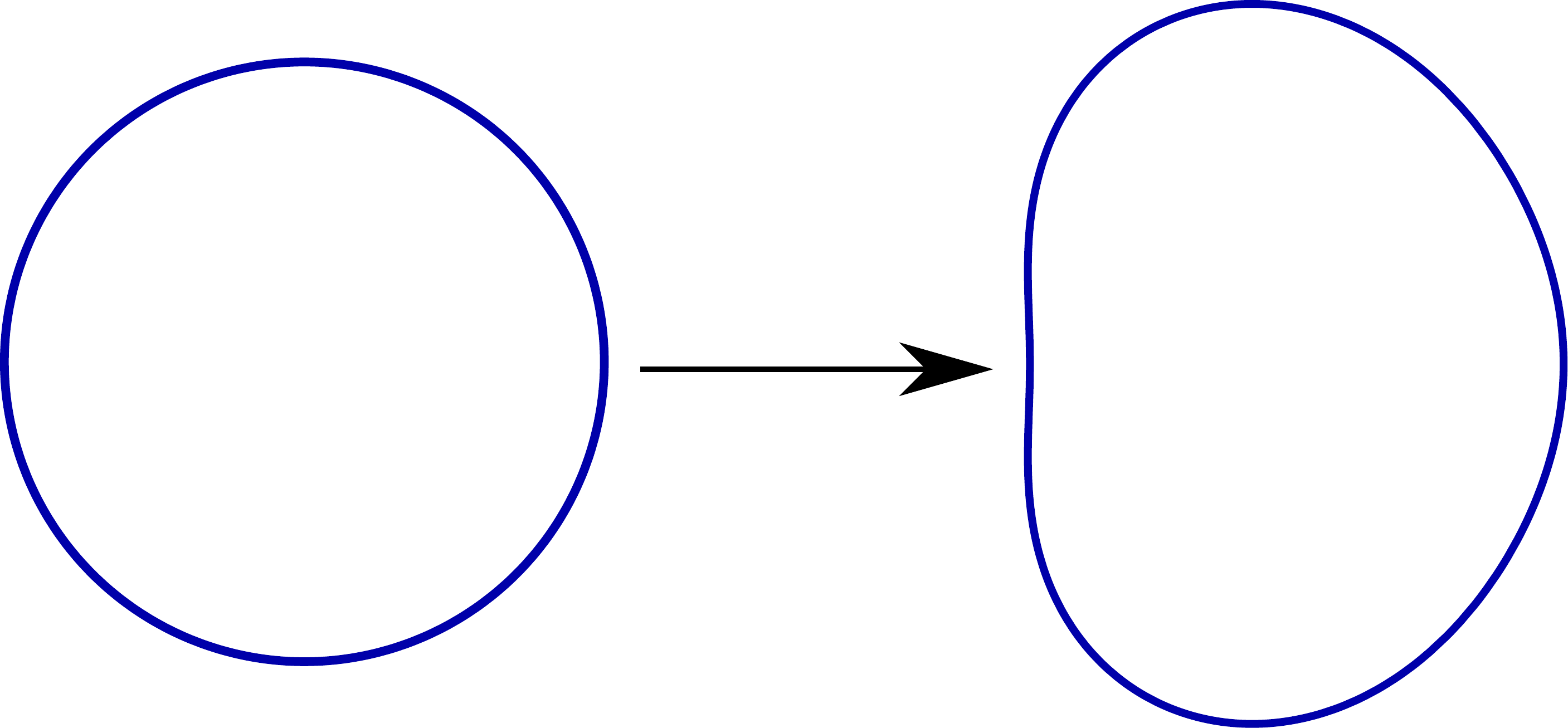}
\caption{Approximate traveling wave shape with velocity $V=0.22$ bifurcated from a radial steady state with $R=4$, $\beta=5/8$.
 The shape captures terms up to third order in $V$ computed as detailed in Appendix A.}
	\label{fig:figure_free_boundary}
} 
\end{figure}

Fig. \ref{fig:figure_free_boundary} illustrates the  change in shape when the radially symmetric steady state bifurcates to a non-radial traveling wave.  The  calculations are presented  in the Appendix A.

%
%


\section{Nonradial steady states }
\label{sec:nonradial_steady_states}

While the main focus of this work is on traveling wave solutions, we also establish existence of steady state solutions lacking radial symmetry which, like traveling waves, form branches bifurcating from the family of radially symmetric steady states. Our analysis is restricted to bifurcation from pointwise minimal solutions of  \eqref{radialLiouville}-\eqref{radialLiouvilleBC}, whose existence is guaranteed by statement (ii)
of Theorem \eqref{firstExistTH}.  

As before we fix  $R>0$ and perform a local analysis in a neighborhood of a radially symmetric 
steady state $(\Lambda_0,\Phi_0)$. We assume that  $(\Lambda_0,\Phi_0)\in \mathcal{A}_1$, and moreover that $\Phi_0$ is a pointwise minimal solution of \eqref{radialLiouville}-\eqref{radialLiouvilleBC} for $\Lambda=\Lambda_0$. Therefore, by Proposition  \eqref{DefineSolLiouvilNonrad} there exists a family of solutions $\Lambda=\Lambda(V,\eta,z)$
$S=S(x,y,V,\eta,z)$ of the problems \eqref{liouvillewithv}-\eqref{paraconditions1} in domains
$\Omega_\eta$. The problem of finding solutions of \eqref{liouvillewithv}-\eqref{paraconditions2} with $V=0$ can be rewritten as the fixed point problem \eqref{finalfixedP_forTW}. Furthermore, in terms of the linearized operator 
$
L_\rho(\,\cdot\,;z)$, given by \eqref{LinearizOper_rho}, the necessary condition for bifurcation of steady states at $(\Lambda_0,\Phi_0)$ is that $1$ is an eigenvalue of 
$
L_\rho(\,\cdot\,;z)$ with $V=0$ and an eigenfunction $\rho$ satisfying the orthogonality condition $\int_{-\pi}^{\pi}\rho(\varphi)\cos\varphi \,d\varphi=0$. In view of Lemma \ref{eigen_val_of_linearizedLiouv_with_ev}, this necessary condition  can be reformulated as $E_l(0)=1$ for some $l=2,3,\dots$, 
where $E_l(z)$ are the eigenvalues given by \eqref{bifurkat_nontr_SW}.

\begin{lem}
	\label{lemma_pro_sobstvennye_zn_steady_st_liouv} Let $\Phi_0$ be a pointwise minimal
	solution of \eqref{radialLiouville}-\eqref{radialLiouvilleBC} with $\Lambda=\Lambda_0\geq 0$, and let $L_\rho(\,\cdot\,;z)$
be the family of linearized operators given by \eqref{LinearizOper_rho}, such that 
$z=0$ corresponds to the linearization around $(\Lambda_0,\Phi_0)$. Then  eigenvalues $E_l(z)$, $l=2,3,\dots$ of $L_\rho(\,\cdot\,;z)$, given by \eqref{bifurkat_nontr_SW}, are strictly increasing in $z$ for sufficiently small $z$, and if $E_{l_1}=E_{l_2}(0)=1$ for $l_1,l_2\geq 2$, then $l_1=l_2$. 
%
%
%
\end{lem}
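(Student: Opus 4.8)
The plan is to tie both assertions to monotonicity properties of the radial functions $h_l(\cdot;z)$ from \eqref{EscheKuchaSobstvZn}. By \eqref{bifurkat_nontr_SW} one has $E_l(z)=\frac1{l^2}+\frac{R^2}{\beta l^2}\mu_l(z)$ with $\mu_l(z):=\Phi''(R;z)+h_l'(R;z)$, so $E_l(z)=1$ is equivalent to $\beta=\beta_l(z):=R^2\mu_l(z)/(l^2-1)$, and it suffices to show that $z\mapsto\beta_l(z)$ is strictly increasing near $z=0$ and that $l\mapsto\beta_l(0)$ is injective. First I would remove the inhomogeneous boundary datum from \eqref{EscheKuchaSobstvZn}: differentiating \eqref{radialLiouville} in $r$ shows that $-\partial_r\Phi(r;z)$ solves the $l=1$ instance of \eqref{EscheKuchaSobstvZn}, and since the operator $\mathcal L_1^z w:=-\frac1r(rw')'+(\frac1{r^2}+1-\Lambda(z)e^{\Phi(r;z)})w$ with Dirichlet data is invertible by Proposition \ref{Spectrum_Liouville_linaerized}, $h_1(\cdot;z)=-\partial_r\Phi(\cdot;z)$. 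Writing $\mathcal L_l^z w:=-\frac1r(rw')'+(\frac{l^2}{r^2}+1-\Lambda(z)e^{\Phi(r;z)})w$ and $v_l:=h_l-h_1=h_l+\partial_r\Phi$, which vanishes at $r=0$ and $r=R$, one gets $\mathcal L_l^z v_l=\frac{l^2-1}{r^2}\partial_r\Phi$, hence $v_l=(l^2-1)\tilde v_l$ where $\mathcal L_l^z\tilde v_l=\partial_r\Phi/r^2$ on $(0,R)$, $\tilde v_l(0)=\tilde v_l(R)=0$, and $\mu_l(z)=(l^2-1)\tilde v_l'(R;z)$, i.e. $\beta_l(z)=R^2\tilde v_l'(R;z)$.

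With this reduction the injectivity of $l\mapsto\beta_l(0)$ is immediate. For $l\ge 2$ the operator $\mathcal L_l^0$ is coercive (positive principal eigenvalue, Proposition \ref{Spectrum_Liouville_linaerized}) and $\partial_r\Phi<0$ on $(0,R]$ by Lemma \ref{lem:properties_steady_states}, so the maximum principle gives $\tilde v_l<0$ on $(0,R)$ and Hopf's lemma gives $\tilde v_l'(R)>0$. For $2\le l<l'$ one computes $\mathcal L_{l'}^0(\tilde v_{l'}-\tilde v_l)=-\frac{(l')^2-l^2}{r^2}\tilde v_l>0$ on $(0,R)$ with zero boundary values, whence $\tilde v_{l'}-\tilde v_l>0$ on $(0,R)$ and, by Hopf's lemma at $r=R$, $\tilde v_{l'}'(R)<\tilde v_l'(R)$. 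Thus $\beta_l(0)=R^2\tilde v_l'(R;0)$ is strictly decreasing in $l$, so $\beta=\beta_{l_1}(0)=\beta_{l_2}(0)$ forces $l_1=l_2$, which is the second assertion.

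For strict monotonicity in $z$ I would differentiate $\mathcal L_l^z\tilde v_l=\partial_r\Phi/r^2$ in $z$. On the minimal branch the parameter may be taken as $z=\Lambda-\Lambda_0$, so $\partial_z\Lambda=1$; minimal solutions are pointwise increasing in $\Lambda$ (Theorem \ref{firstExistTH}(ii)), hence $\psi:=\partial_z\Phi\ge 0$ in $B_R$, and $\psi$ solves $-\Delta\psi=e^{\Phi}+(\Lambda e^{\Phi}-1)\psi=:g(r,\psi)$ with $\psi=0$ on $\partial B_R$. Since $\partial_r g=e^{\Phi}\,\partial_r\Phi\,(1+\Lambda\psi)<0$, a symmetrization/moving-plane argument (cf. \cite{GidNiNir1979}) shows $\psi$ is radially non-increasing, i.e. $\partial_r\psi\le 0$. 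Then $\partial_z\tilde v_l$ solves $\mathcal L_l^z(\partial_z\tilde v_l)=(1+\Lambda\psi)e^{\Phi}\tilde v_l+\frac{\partial_r\psi}{r^2}$ on $(0,R)$ with $\partial_z\tilde v_l(0)=\partial_z\tilde v_l(R)=0$, and its right-hand side is strictly negative (the first term because $\tilde v_l<0$, the second because $\partial_r\psi\le 0$). The maximum principle then gives $\partial_z\tilde v_l<0$ on $(0,R)$, Hopf's lemma gives $\partial_z\tilde v_l'(R)>0$, hence $\partial_z\beta_l>0$ and $\partial_z E_l=\frac{l^2-1}{\beta l^2}\partial_z\beta_l>0$; since this runs at every sufficiently small $z$, $E_l$ is strictly increasing there.

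The hard part will be the step invoked above that $\partial_z\Phi$ is radially non-increasing: its governing equation is linear but spatially non-autonomous, so the classical symmetry results do not apply off the shelf, and one first has to verify (as above) that the effective source $g(r,\cdot)$ is decreasing in $r$, which rests precisely on $\partial_r\Phi<0$ together with the positivity of $\partial_z\Phi$. Everything else — the reduction to $\tilde v_l$, the coercivity of $\mathcal L_l^z$ for $l\ge 2$, and the maximum-principle and Hopf comparisons — is routine.
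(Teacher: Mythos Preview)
Your proof is correct and follows essentially the same route as the paper. Both arguments hinge on passing from $h_l$ to the function $v_l=h_l+\partial_r\Phi$ (the paper calls it $\psi_l$), which satisfies a homogeneous Dirichlet problem with source $\frac{l^2-1}{r^2}\partial_r\Phi$; negativity of $v_l$, maximum principle comparisons, and Hopf's lemma then yield both the strict monotonicity in $l$ (hence injectivity of $l\mapsto\beta_l(0)$) and the strict monotonicity in $z$.

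The only substantive difference is how you obtain $\partial_r(\partial_z\Phi)\le 0$. The paper compares $\Phi'(\cdot;z_2)-\Phi'(\cdot;z_1)$ directly: this difference satisfies $\mathcal L_1^{z_2}\bigl(\Phi'(\cdot;z_2)-\Phi'(\cdot;z_1)\bigr)=\bigl(\Lambda(z_2)e^{\Phi(\cdot;z_2)}-\Lambda(z_1)e^{\Phi(\cdot;z_1)}\bigr)\Phi'(\cdot;z_1)>0$, vanishes at $r=0$, and is positive at $r=R$ by Hopf applied to $\Phi(\cdot;z_1)-\Phi(\cdot;z_2)$; positivity on $(0,R)$ then follows from the maximum principle for $\mathcal L_1$. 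Your route via Gidas--Ni--Nirenberg (applied to $-\Delta\psi=g(r,\psi)$ with $\partial_r g<0$ on the range of $\psi$) reaches the same conclusion but is a heavier tool; since $\psi$ is already radial, the same ODE differentiation that underlies the paper's comparison would give $\psi_r<0$ with less machinery. Either way the argument closes.
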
  
  
\begin{proof}
	Rewrite problem \eqref{EscheKuchaSobstvZn},
	which determines $h_l(r;z)$, in terms of the new unknown $\psi_l(r;z):=h_l(r;z)+\Phi^\prime(r;z)$:
	\begin{equation}
	\label{EscheKuchaSobstvZnBIS}
	-\frac{1}{r}(r \psi^\prime_l(r))^\prime +\left(\frac{l^2}{r^2}+1-\Lambda(z) e^{\Phi(r;z)}\right)\psi_l(r)=\frac{l^2-1}{r^2}\Phi^\prime(r;z)\quad
	0<r<R, \quad \psi_l(0)=\psi_l(R)=0.
	\end{equation}	
Since $\Phi(r;z)$ are minimal solutions  of \eqref{radialLiouville}-\eqref{radialLiouvilleBC} for small  $z$, we can employ a comparison argument to prove that  $\psi_l(r;z_1)<\psi_l(r;z_2)$, $0<r<R$, whenever  $z_1>z_2$. Indeed, we have 
\begin{equation}
\label{EscheKuchaSobstvZnBIScomparison}
\begin{aligned}
&-\frac{1}{r}\bigl(r (\psi^\prime_l(r;z_2)-\psi^\prime_l(r;z_1))\bigr)^\prime +\left(\frac{l^2}{r^2}+1-\Lambda(z_2) e^{\Phi(r;z_2)}\right)(\psi_l(r;z_2)-\psi_l(r;z_1))\\
&=~ \frac{l^2-1}{r^2}(\Phi^\prime(r;z_2)-\Phi^\prime(r;z_1)) + (\Lambda(z_2) e^{\Phi(r;z_2)}- \Lambda(z_1) e^{\Phi(r;z_1)})\psi_l(r;z_1).
\end{aligned}
\end{equation}
Using factorization idea  as in Lemma \ref{Spectrum_Liouville_linaerized} we can show that
every solution of \eqref{EscheKuchaSobstvZnBIS} is negative in $(0,R)$, therefore the last term 
in \eqref{EscheKuchaSobstvZnBIScomparison} is positive. The same factorization 
trick applied to the equation 
\begin{equation*}
\begin{aligned}
-\frac{1}{r}\bigl(r (\Phi^\prime (r;z_2)-\Phi^\prime(r;z_1))^\prime\bigr)^\prime +&\left(\frac{1}{r^2}+1-\Lambda(z_2) e^{\Phi(r;z_2)}\right)(\Phi^\prime(r;z_2)
-\Phi^\prime (r;z_1))\\
=&(\Lambda(z_2) e^{\Phi(r;z_2)}- \Lambda(z_1) e^{\Phi(r;z_1)})\Phi^\prime (r;z_1)
\end{aligned}
\end{equation*}		
shows that $\Phi^\prime(r;z_2)-\Phi^\prime(r;z_1)> 0$ if $\Phi(r;z_1)>\Phi(r;z_2)$ on 
$(0,R)$ and $\Lambda(z_1)>\Lambda(z_2)$. Thus the right hand side of \eqref{EscheKuchaSobstvZnBIScomparison}
is positive  and the inequality $\psi_l(r;z_1)<\psi_l(r;z_2)$ follows. Moreover
the Hopf Lemma applied after a proper factorization (again as in Lemma \ref{Spectrum_Liouville_linaerized}) implies that $\psi_l^\prime(R;z_1)<\psi_l^\prime(R;z_2)$.
This proves monotonicity of $E_l(z)$.

To complete the proof of the Lemma assume by contradiction that $E_{l_1}(0)=E_{l_2}(0)$
for different $l_1,l_2\geq 2$, say $l_1>l_2$. Then by \eqref{bifurkat_nontr_SW} we have
\begin{equation}
\label{protiv_raznye_eigenval _liouv}
\psi_{l_1}^\prime(R;0)/(l_1^2-1)=\psi_{l_2}^\prime(R;0)/(l_2^2-1)=\beta/R^2.
\end{equation}
On the other hand functions $\psi_{l_i}^\prime(r;0)/(l_i^2-1)$, $i=1,2$ solve 
\begin{equation}
\label{Podelili_poluchili_liouvil}
-\frac{1}{r}(r \psi^\prime_{l_i}/(l_i^2-1))^\prime +\left(\frac{l^2_i}{r^2}+1-\Lambda_0 e^{\Phi_0}\right)\psi_{l_i}/(l_i^2-1)=\frac{1}{r^2}\Phi^\prime_0, \quad 0<r<R.
\end{equation}	
Then the pointwise inequalities  $0> \psi_{l_1}> \psi_{l_2}$ on $(0,R)$ follow,
and we have $\psi_{l_1}^\prime(R;0)/(l_1^2-1)<\psi_{l_2}^\prime(R;0)/(l_2^2-1)$, contradiction.
\end{proof}

The following theorem establishes the existence of bifurcations to not radially symmetric steady states if the surface tension parameter $\beta$ is sufficiently small.  
  
\begin{thm}
\label{thm:bifurc_teady_st}	
	 Given $R>0$, and $l=2,3,\dots$, for sufficiently small $\beta>0$ there is 
	a family of steady states solutions of  \eqref{tw_actinflow}-\eqref{tw_boundary}
whith the domian $\Omega$ whose boundary is given by 
\begin{equation}
\partial \Omega=\{(x,y)=(R+\rho_\delta(\varphi))(\cos\varphi,\sin\varphi)\,|\, -\pi\leq \varphi<\pi\}, 
\quad\text{where}\ \rho_\delta=\delta \cos l\varphi+o(\delta), 
\end{equation}
and $\delta>0$ is a small parameter.
\end{thm}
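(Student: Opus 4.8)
The plan is to recast the steady-state problem as a bifurcation-from-a-simple-eigenvalue problem in a function space adapted to the angular mode $\cos l\varphi$, and then to run the Leray--Schauder degree argument from the proof of Theorem~\ref{bifurc_tw_thm_myaso} almost verbatim, with the eigenvalue $E_l(z)$ of \eqref{bifurkat_nontr_SW} now playing the role that $E_{0,1}(z)$ played there. Fix $l\geq2$. By \eqref{exceptional_va_Liouv} the equality $E_l(0)=1$ amounts to $\beta=\beta_l=R^2\bigl(h_l'(R;0)+\Phi_0''(R)\bigr)/(l^2-1)$; as the reference solution $(\Lambda_0,\Phi_0)$ runs over the minimal branch of \eqref{radialLiouville}--\eqref{radialLiouvilleBC}, the quantity $h_l'(R;0)+\Phi_0''(R)$ depends continuously on $\Lambda_0$, vanishes at $\Lambda_0=0$ and is strictly positive afterwards (Lemma~\ref{lemma_pro_sobstvennye_zn_steady_st_liouv}), so $\beta_l$ attains every sufficiently small positive value. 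For such a $\beta$, fix $(\Lambda_0,\Phi_0)\in\mathcal A_1$ to be the corresponding pointwise minimal solution, so that $E_l(0)=1$. By Proposition~\ref{DefineSolLiouvilNonrad} and the reduction carried out in Section~\ref{sec:bifurcation_via_degree}, the steady states of \eqref{tw_actinflow}--\eqref{tw_boundary} near $B_R$ are exactly the fixed points of the compact $C^1$ map $\rho\mapsto\overline{K}_\rho(\rho,0;z)$ --- that is, of \eqref{finalfixedP_forTW} with the velocity component frozen at $0$ --- with $z$ as bifurcation parameter and $z=0$ corresponding to $(\Lambda_0,\Phi_0)$.

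The first step is an equivariant reduction. For $V=0$ the background $\Phi_0$ is radial and the elliptic problem \eqref{liouvillewithv}--\eqref{paraconditions1} is rotationally equivariant by uniqueness of its solution, hence $\overline{K}_\rho(\,\cdot\,,0;z)$ commutes with rotations of the circle; I restrict it to the closed subspace of even, $\tfrac{2\pi}{l}$-periodic, mean-zero perturbations
\[
 \mathcal H_l:=\bigl\{\rho\in\mathcal H\ \bigm|\ \rho(\varphi+\tfrac{2\pi}{l})=\rho(\varphi)\bigr\}=\mathrm{span}\{\cos(ml\varphi):m\geq1\},
\]
with $\mathcal H$ as in \eqref{ClassForFixedPointTW}. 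By this equivariance and the reflection symmetry established right after \eqref{ClassForFixedPointTW}, the map sends $\mathcal H_l$ into itself; moreover on $\mathcal H_l$ the centering condition \eqref{centerofmass} holds automatically, since $(R+\rho)^3$ has no first angular harmonic when $l\geq2$. Hence every fixed point of $\overline{K}_\rho(\,\cdot\,,0;z)$ in $\mathcal H_l$ is a genuine steady state, and it cannot be a traveling wave because $l\geq2$. Passing to $\mathcal H_l$ therefore removes the spurious translational kernel spanned by $\cos\varphi$ --- the mode responsible for the persistent eigenvalue $1$ of the full linearized operator --- and at the same time excludes drift.

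The second step is the spectral analysis. As in the proof of Lemma~\ref{eigen_val_of_linearizedLiouv_with_ev}, the linearized operator $L_\rho(\,\cdot\,,0;z)$ from \eqref{LinearizOper_rho} is diagonal in the Fourier basis --- the mode $\cos j\varphi$ being an eigenvector with eigenvalue $E_j(z)$ defined as in \eqref{bifurkat_nontr_SW} through \eqref{EscheKuchaSobstvZn} --- so its restriction to $\mathcal H_l$ has spectrum $\{E_j(z):j=l,2l,3l,\dots\}$, and by construction $E_l(0)=1$. Lemma~\ref{lemma_pro_sobstvennye_zn_steady_st_liouv} supplies exactly the two facts needed for bifurcation: $E_l$ is strictly increasing in $z$ near $0$ (so the value $1$ is crossed transversally), and $E_j(0)\neq1$ for every $j=2l,3l,\dots$ (if $E_{j_1}=E_{j_2}=1$ then $j_1=j_2$), so $1$ is a simple eigenvalue of $L_\rho(\,\cdot\,,0;0)|_{\mathcal H_l}$. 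Finally the uniform estimate of Lemma~\ref{lem:exceptional_beta} gives $E_j(z)<1$ for all large $j$, uniformly in $z$, so only finitely many eigenvalues can lie near $1$ and, by continuity, every eigenvalue other than $E_l(z)$ stays off $1$ on a punctured neighborhood of $z=0$. Consequently the integer $N(z)=\#\{\text{eigenvalues of }L_\rho(\,\cdot\,,0;z)|_{\mathcal H_l}\text{ lying in }(1,+\infty)\}$ changes by exactly one as $z$ crosses $0$, the local Leray--Schauder index $(-1)^{N(z)}$ jumps, and --- just as in the proof of Theorem~\ref{bifurc_tw_thm_myaso} --- a branch of nontrivial fixed points $\rho_\delta\in\mathcal H_l$ bifurcates at $z=0$. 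Since the kernel is one-dimensional, spanned by $\cos l\varphi$, and the crossing transversal, a Lyapunov--Schmidt reduction (equivalently, the Crandall--Rabinowitz theorem \cite{CraRab1971}) shows the branch is a smooth curve with $\rho_\delta=\delta\cos l\varphi+o(\delta)$ as $\delta\to0$; shifting $\varphi\mapsto\varphi+\pi/l$ if necessary one may take $\delta>0$. Passing back through Proposition~\ref{DefineSolLiouvilNonrad} produces the stress $S$ and the myosin density $m=m_0e^{S}$ solving \eqref{tw_actinflow}--\eqref{tw_boundary} in $\Omega=\Omega_{\rho_\delta}$, which proves the theorem.

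Most of this is assembly of machinery already in place. The genuinely delicate inputs are those isolated in the preparatory lemmas: the strict $z$-monotonicity of $E_l$ together with the non-coincidence of distinct $E_j$ at the value $1$ (Lemma~\ref{lemma_pro_sobstvennye_zn_steady_st_liouv}), which deliver simplicity and transversality, and the uniform eigenvalue estimate (Lemma~\ref{lem:exceptional_beta}), which makes $N(z)$ finite and its jump meaningful. The one step special to this theorem is the equivariant reduction to $\mathcal H_l$: it makes the eigenvalue $1$ simple in spite of the translation invariance of the $V=0$ problem, and it forces the bifurcating solutions to be true steady states rather than slow traveling waves.
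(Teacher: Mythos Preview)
Your proof is correct and follows the same overall strategy as the paper's --- a Leray--Schauder degree jump driven by the monotone crossing of $E_l(z)$ through $1$ supplied by Lemma~\ref{lemma_pro_sobstvennye_zn_steady_st_liouv} --- but you handle the reduction differently. The paper works with the full fixed-point problem \eqref{finalfixedP_forTW} in $\mathcal H\times\mathbb R$ and invokes smallness of $\beta$ a second time, beyond arranging $E_l(0)=1$, to push the traveling-wave eigenvalues $E_{0,1}(z)$ of \eqref{KvadratnoeUravnenieBifurk} away from $1$; then among all eigenvalues of $\overline L$ only $E_l$ crosses, and the argument of Theorem~\ref{bifurc_tw_thm_myaso} applies. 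You instead restrict from the outset to the $2\pi/l$-periodic subspace $\mathcal H_l$, which removes the $\cos\varphi$ mode and hence the eigenvalues $E_{0,1}$ altogether, makes the centering condition \eqref{centerofmass} automatic, and guarantees $V=0$ rather than leaving it to be inferred from the direction of the crossing eigenvector. This equivariant reduction is cleaner; the only point to flesh out is that the explicit integral formula for $K$ (integration from $0$) does not make the invariance of $\mathcal H_l$ immediate --- it holds because for $\rho\in\mathcal H_l$ and $V=0$ the outer integrand in $K$ is odd and $2\pi/l$-periodic (the inner integral of the even, mean-$(-\beta)$ function $-H(P+\lambda)$ cancels the $\psi_1$ term), so its primitive is even and periodic. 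Your final appeal to Crandall--Rabinowitz for the asymptotic form $\rho_\delta=\delta\cos l\varphi+o(\delta)$ is appropriate and makes explicit what the paper leaves implicit.
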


\begin{proof} The argument follows the line of Theorem \ref{bifurc_tw_thm_myaso}. 
The bifurcation condition \eqref{bifurccondition1} for traveling waves is now replaced by 
\begin{equation}
\label{steadystatesbifurccondition}
\frac{\psi_{l}^\prime(R;0)}{l^2-1}=\beta/R^2,
\end{equation}
where $\psi_l(r;0)$ is a solution of \eqref{EscheKuchaSobstvZnBIS} for $z=0$, 
and this latter condition is always satisfied at some pair 
$(\Lambda_0,\Phi_0)\in\mathcal{A}_0$ 
, provided $\beta>0$ is sufficiently 
small. Note that in contrast to \eqref{bifurccondition1} the condition \eqref{steadystatesbifurccondition} depends on $\beta$. Considering $\beta>0$ so small that the eigenvalues $E_{0,1}(z)$ (of
the linearized operator $\overline{L}(\,\cdot\,;z)$), given by 
\eqref{KvadratnoeUravnenieBifurk},  are bounded away from $1$, and using Lemma \ref{lemma_pro_sobstvennye_zn_steady_st_liouv} we see that for sufficiently small $z$  only the
eigenvalue $E_l(z)$ takes value $1$ and the sign of $E_l(z)-1$ changes. This allows us to establish the bifurcation of non-radial steady states analogously to Theorem \ref{bifurc_tw_thm_myaso}.
%
\end{proof}

\section{Conclusions}

We consider a two dimensional Keller-Segel type elliptic-parabolic system with free boundary governed by a nonlocal kinematic condition which involves boundary curvature. This system models the motility of a eukaryotic cell on a flat substrate and is obtained as a reduction \cite{Mog_private} of the more complicated model from \cite{BarLeeAllTheMog2015}. We show that the model captures the main biological features of cell motility such as persistent motion and breaking of symmetry which have been studied in numerous experimental works, e.g., \cite{BarLeeAllTheMog2015,KerPinAllBarMarMogThe08}. In the model under consideration these two features correspond to bifurcation from radial steady states to non-radial steady states and traveling waves.
In particular, our  analytical and numerical  calculations capture emergence of asymmetric shapes of the traveling waves in this bifurcation, 
see Fig. \ref{fig:figure_free_boundary}. Specifically, the asymmetry of the cell shape 
depicted on Fig. \ref{fig:figure_free_boundary} qualitatively agrees 
with that of an actual moving cell as observed in  
\cite{BarLeeKerMogThe2011}.

 The results are obtained by a two step procedure.
  First we reduce the problem of finding traveling waves/steady states to a Liouville type equation with an additional boundary condition due to the free boundary setting. Using methods from \cite{CraRab1971}  based on the Implicit Function Theorem, we further reduce the problem to a fixed point problem for a nonlinear compact mapping. Second, Leray-Schauder degree theory is applied to the this fixed point problem to prove existence of both  traveling waves and nonradial steady states.

{\em Acknowledgments.} Work of LB, JF, VR was partially supported by NSF grant DMS-1405769. The work of  VR  was  also partially supported by the PSU Center ``Mathematics of Living and Mimetic Matter". The authors are thankful to PSU students Matthew Mizuhara and Hai Chi for careful reading of the manuscript and  suggestions leading  to  its improvement. We also would like to acknowledge numerous  fruitful discussions with our colleagues A. Mogilner and L. Truskinovsky.

\bibliographystyle{plain}
\bibliography{references}

\begin{appendices}

\section{Asymptotic expansion of traveling waves near bifurcation point and  emergence of asymmetric shapes}
\label{sec:appendixA}

In this Appendix we  construct  several terms of the asymptotic expansion of the free boundary problem \eqref{liouvillewithv}-\eqref{paraconditions2}.    	
This is done for the case when the necessary bifurcation condition
\eqref{bifurccondition1}(Section \ref{sec:bifurcation_cond}) is
satisfied on a pair $(\Lambda_0, \Phi_0)$ with $\Phi_0$ being a
minimal solution of
\eqref{radialLiouville}-\eqref{radialLiouvilleBC}. Then the  bifurcating
traveling waves can be expanded in a (formal) series  
in a small parameter $\ve:=V$. This
expansion can be rigorously justified  using
Lyapunov-Schmidt reduction. While the first order approximation is already introduced in Section \ref{sec:bifurcation_cond},  here we calculate the first three terms in  this
asymptotic expansion and justify the assumption that the first order correction to $\Lambda_0$ is zero. Note that the first order correction to the shape of the domain is zero, the second order is symmetric with respect to the $y$-axis, and the asymmetry emerges in the third correction term. 

We seek the unknown domain $\Omega$ in the form $\Omega= \{(r \cos\varphi, r\sin\varphi) \mid \varphi\in[-\pi,\pi), 0\leq r < R
+\rho(\varphi)\}$ and  introduce the following expansions for the solutions of \eqref{liouvillewithv}-\eqref{paraconditions2}
\begin{equation*}
\rho=
\ve\rho_1 + \ve^2 \rho_2 +\ve^3\rho_3+O(\ve^4), \quad
S= \Phi_0(r) + \varepsilon S_1 + \varepsilon^2 S_2 + \ve^3
S_3+O(\ve^4),
\end{equation*}
\begin{equation*}
\Lambda=\Lambda_0+\ve
\Lambda_1+\ve^2\Lambda_2+\ve^3 \Lambda_3+O(\ve^4),\quad
\text{and}\quad \lambda = \lambda_0 + \ve \lambda_1 +
\ve^2\lambda_2 +\ve^3\lambda_3+O(\ve^4),
\end{equation*}
 where $\lambda_0= {\beta}/{R} -\Phi^\prime_0(R)$  follows  from  the leading term in the expansion of   \eqref{paraconditions2} in $\ve=V$. 
Plugging the above expansions into \eqref{liouvillewithv}-\eqref{paraconditions2} and equating the terms of order $\ve$, $\ve^2$, $\ve^3$  yields the following equations
\begin{equation}
\label{FirstOrderLiouvAppend} -\Delta S_1+S_1= \Lambda_0
e^{\Phi_0(r)}(S_1-x)+\Lambda_1 e^{\Phi_0(r)},
\end{equation}
\begin{equation}
\label{SecondOrderLiouvAppend} -\Delta S_2+S_2= \Lambda_0
e^{\Phi_0(r)}S_2+
\frac{\Lambda_0}{2}
e^{\Phi_0(r)}(S_1-x)^2+\Lambda_1 e^{\Phi_0(r)}(S_1-x)+\Lambda_2
e^{\Phi_0(r)},
\end{equation}
\begin{equation}
\begin{aligned}
-\Delta S_3+S_3  - \Lambda_0 e^{\Phi_0(r)} S_3 = & \Lambda_0 e^{\Phi_0(r)}\left(
(S_1-x)S_2 + (S_1- x)^3/6\right)\\
& +\Lambda_1  e^{\Phi_0(r)}\left(S_2+(S_1-x)^2/2\right)+
\Lambda_2 e^{\Phi_0(r)}(S_1-x) + \Lambda_3 e^{\Phi_0(r)}
\label{eq:thirdorder_stress_bisAppend}
\end{aligned}
\end{equation}
in $B_R$ with boundary conditions
\begin{equation}
\label{FirstOrderLiouv_1bc_Append}
S_1(R,\varphi)+\Phi_0^\prime(R)\rho_1(\varphi)=0
\end{equation}
\begin{equation}
\label{SecondOrderLiouv_1bc_Append}
S_2(R,\varphi)+\Phi_0^\prime(R)\rho_2(\varphi)=T_1(\varphi)
\end{equation}
%
\begin{equation}
\label{ThirdOrderLiouv_1bc_Append}
S_3(R,\varphi)+\Phi_0^\prime(R)\rho_3(\varphi) =-\partial_r
S_1(R,\varphi)\rho_2 +T_2(\varphi)
\end{equation}
and
\begin{equation}
\label{FirstOrderLiouv_2bc_Append} \cos\varphi=
\partial_r S_1(R,\varphi)+\Phi_0^{\prime\prime}(R)\rho_1(\varphi)
+\frac{\beta}{R^2}(\rho_1^{\prime\prime}(\varphi)+\rho_1(\varphi))
+\lambda_1,
\end{equation}
%
\begin{equation}
\label{SecondOrderLiouv_2bc_Append} 0=
\partial_r S_2(R,\varphi)+
\Phi_0^{\prime\prime}(R)\rho_2(\varphi)
+\frac{\beta}{R^2}(\rho^{\prime\prime}_2(\varphi)+\rho_2(\varphi))
+T_3(\varphi)+\lambda_2
\end{equation}
%
\begin{equation}
\begin{aligned}
\label{ThirdOrderLiouv_2bc_Append}
\frac{1}{R}\rho_2^\prime(\varphi)\sin\varphi=&\partial_r
S_3(R,\varphi)
+
\Phi_0^{\prime\prime}(R)\rho_3(\varphi)
\\
&+\partial^2_r S_1(R,\varphi) \rho_2(\varphi)
-\partial_\varphi
S_1(R,\varphi)\frac{\rho_2^\prime(\varphi)}{R^2}+
\frac{\beta}{R^2}(\rho^{\prime\prime}_3(\varphi)+\rho_3(\varphi))
+T_4(\varphi)+\lambda_3,
\end{aligned}
\end{equation}
where $T_i$, $i=1,\dots 4$ denote various terms containing
factors $\rho_1(\varphi)$ or $\rho^\prime_1(\varphi)$ which will
be shown to vanish.

As explained in Section \eqref{sec:bifurcation_via_degree}, due to the symmetry of the problem  we only consider even functions $\rho$.
Moreover we impose the condition that the area of $\Omega$ is
equal to that of the disc $B_R$ and fix the center of mass of the
domain at the origin
to get rid of solutions obtained by infinitesimal shifts of the
domain. To the order $\ve$ these two conditions yield
\begin{equation}
\label{add_cond_Liouv_Appen}  
\int_{-\pi}^{\pi}\rho_1 \,d\varphi=0, \quad 
\int_{-\pi}^{\pi}\rho_1 \cos \varphi \,d\varphi =0.
\end{equation}
Since $\Phi_0$ is a minimal solution of
\eqref{radialLiouville}-\eqref{radialLiouvilleBC} we can locally
parametrize solutions $(\Lambda,\Phi(r,\Lambda))$ of
\eqref{radialLiouville}-\eqref{radialLiouvilleBC}  by $\Lambda$ so
that $\Phi_0(r)=\Phi(r,\Lambda_0)$. Expanding $\rho_1$ into a
Fourier series $\rho_1=\sum c_l\cos l\varphi$ we find from
\eqref{FirstOrderLiouvAppend},\eqref{FirstOrderLiouv_1bc_Append}
that
\begin{equation*}
S_1=\tilde \phi(r,\Lambda_0)\cos\varphi+\Lambda_1\partial_\Lambda
\Phi(r,\Lambda_0)+\sum c_l h_l(r)\cos l\varphi,
\end{equation*}
where $\tilde \phi(r,\Lambda)$ are solutions of
\eqref{equatttion81}-\eqref{equatttion81BC} and $h_l$ are
solutions of problems \eqref{EscheKuchaSobstvZn} with
$\Lambda=\Lambda_0$ and $\Phi=\Phi_0$ (since $\Phi_0$ is a minimal
solution of \eqref{radialLiouville}-\eqref{radialLiouvilleBC},
solutions $h_l$ of \eqref{EscheKuchaSobstvZn} are defined uniquely). By
\eqref{add_cond_Liouv_Appen} the first Fourier coefficients satisfy $c_0=c_1=0$.
Moreover, assuming that the condition \eqref{exceptional_va_Liouv} is
satisfied we find by virtue of \eqref{FirstOrderLiouv_2bc_Append}
that all other Fourier coefficients $c_l$ are also zero, i.e.
$\rho_1=0$. Thus
\begin{equation}
\label{reprBis_S1_Liouv_Append} S_1=\tilde
\phi(r,\Lambda_0)\cos\varphi+\Lambda_1\partial_\Lambda
\Phi(r,\Lambda_0)
\end{equation}
(next we show that actually $\Lambda_1=0$).


Similarly to above considerations, applying  Fourier analysis to problem
\eqref{SecondOrderLiouvAppend},\eqref{SecondOrderLiouv_1bc_Append},
\eqref{SecondOrderLiouv_2bc_Append} we find
\begin{equation}
\label{SecondOrderS2andphi_Append} S_2=\Lambda_1\partial_\Lambda
\tilde \phi(r,\Lambda_0) \cos\varphi +\tilde S_2(r)\cos
2\varphi+G(r), \quad \rho_2= -\frac{\tilde
	S_2(R)}{\Phi_0^\prime(R)}\cos 2\varphi,
\end{equation}
where $\tilde S_2$ solves
\begin{equation}
-\tilde S_2^{\prime\prime}-\frac{1}{r}\tilde
S_2^\prime+(1+4/r^2)\tilde S_2  - \Lambda_0 e^{\Phi_0(r)}\tilde
S_2 = \frac{\Lambda_0}{4} e^{\Phi_0(r)} (\tilde \phi(r,\Lambda_0)-
r)^2 \label{eq:secondorder_stress_bis}
\end{equation}
on $(0,R)$ with
\begin{equation}
\tilde S_2(0)=0,\quad \tilde
S_2^\prime(R)=\frac{\Phi_0^{\prime\prime}(R)-3\beta/R^2}{\Phi_0^\prime(R)}\tilde
S_2(R), \label{eq:secondorder_stress_bis_bc}
\end{equation}
and $G(r)$ is some function whose particular form is not important
for the further analysis. Note that under the condition
\eqref{exceptional_va_Liouv} problem
\eqref{eq:secondorder_stress_bis}-\eqref{eq:secondorder_stress_bis_bc}
has a unique solution.

Considering the Fourier mode corresponding to $\cos\varphi$  in \eqref{SecondOrderLiouv_2bc_Append} we obtain that
$\Lambda_1=0$, provided that $\partial_\Lambda  \tilde
\phi^\prime(R,\Lambda_0)\not=0$. The latter inequality is proved
as follows. Multiply \eqref{equatttion81}  by
$\Phi^\prime(r,\Lambda)r$ and integrate from $0$ to $R$ to find
that
\begin{equation}
\label{Proizv_vspomog_Append}
\tilde\phi^\prime(R,\Lambda)=\frac{\Lambda}{R\Phi^\prime(R,\Lambda)}\int_0^R
e^{\Phi(r,\Lambda)}\Phi^\prime (r,\Lambda)r^2\, dr=1+ \frac
{\Lambda R^2-\int_0^R \Phi(r,\Lambda)r\, dr-\Lambda \int_0^R
	e^{\Phi(r,\Lambda)}r\, dr} { \int_0^R \Phi(r,\Lambda)r\, dr
	-\Lambda \int_0^R e^{\Phi(r,\Lambda)}r\, dr }.
\end{equation}
Then
\begin{equation}
\label{Proizv_vspomog_AppendBIS}
\partial_\Lambda \tilde\phi^\prime(R,\Lambda_0)>
\frac
{R^2-\int_0^R \partial_\Lambda \Phi(r,\Lambda_0)r\, dr- \int_0^R
	e^{\Phi(r,\Lambda_0)}r\, dr} { \int_0^R \Phi(r,\Lambda_0)r\, dr
	-\Lambda_0 \int_0^R e^{\Phi(r,\Lambda_0)}r\, dr },
\end{equation}
where we have used the fact that  minimal solutions
$\Phi(r,\Lambda)$  are  increasing in $\Lambda$ and the
denominator in \eqref{Proizv_vspomog_Append} is negative. Since the pair $(\Lambda,\Phi)=(\Lambda_0,\Phi_0)$ satisfies \eqref{bifurccondition2} we have
\begin{equation}
\label{Proizv_vspomog_AppendBIS}
\partial_\Lambda\tilde \phi^\prime(R,\Lambda_0)>-
\frac
{\int_0^R (\partial_\Lambda \Phi(r,\Lambda_0)- \Phi(r,\Lambda_0)/\Lambda_0)r\, dr} { \int_0^R \Phi(r,\Lambda_0)r\, dr
	-\Lambda_0 \int_0^R e^{\Phi(r,\Lambda_0)}r\, dr }.
\end{equation}
Furthermore we obtain
that the function $w=\partial_\Lambda \Phi(r,\Lambda_0)- \Phi(r,\Lambda_0)/\Lambda_0$
is positive applying the maximum principle to the
equation $-\Delta w+w=\Lambda_0 e^{\Phi_0(r)}\partial_\Lambda\Phi(r,\Lambda_0)>0$.
Thus $\partial_\Lambda \tilde \phi^\prime(R,\Lambda_0)>0$.

Finally, to identify $S_3$ and $\rho_3$ we apply Fourier
analysis to
\eqref{eq:thirdorder_stress_bisAppend},\eqref{ThirdOrderLiouv_1bc_Append},
\eqref{ThirdOrderLiouv_2bc_Append}. The resulting formula for $\rho_3$ is
\begin{equation}
\label{rho3_Liouv_append} \rho_3=-\frac{\tilde
	S_3(R)}{\Phi_0^\prime(R)}\cos 3\varphi,
\end{equation}
where $\tilde S_3$ is the solution of the equation
\begin{equation}
-\tilde S_3^{\prime\prime}-\frac{1}{r}\tilde
S_3^\prime+(1+9/r^2)\tilde S_3  - \Lambda_0 e^{\Phi_0(r)}\tilde
S_3 = \frac{\Lambda_0}{2} e^{\Phi_0(r)} (\tilde \phi(r)- r) \tilde
S_2(r)+ \frac{\Lambda_0}{24} e^{\Phi_0(r)} (\tilde \phi(r)- r)^3
\label{eq:thirdorder_stress_bis}
\end{equation}
on $(0,R)$ with boundary conditions
\begin{equation}
\tilde S_3(0)=0,\quad \tilde S_3^\prime(R)=
\frac{\Phi_0^{\prime\prime}(R)-8\beta/R^2}{\Phi_0^\prime(R)}\tilde
S_3(R) +\frac{\tilde
	\phi^{\prime\prime}(R)-2/R}{2\Phi_0^\prime(R)}\tilde S_2(R).
\label{eq:thirdorder_stress_bis_bc}
\end{equation}
Thus the first terms of the asymptotic expansion of the function $\rho$ which determine the shape of the 
domain are
\begin{equation}
\rho=-\ve^2\frac{\tilde
	S_2(R)}{\Phi_0^\prime(R)}\cos 2\varphi-\ve^3\frac{\tilde
	S_3(R)}{\Phi_0^\prime(R)}\cos 3\varphi+\dots
\end{equation}
where $\tilde S_2$ solves 
\eqref{eq:secondorder_stress_bis}-\eqref{eq:secondorder_stress_bis_bc}, $\tilde S_3$ solves 
\eqref{eq:thirdorder_stress_bis}-\eqref{eq:thirdorder_stress_bis_bc} and $\tilde \phi$ is a solution of \eqref{equatttion81}-\eqref{equatttion81BC} with $\Lambda=\Lambda_0$ 
and $\Phi=\Phi_0$.

\end{appendices}

\end{document}